\renewcommand{\eqref}[1]{%
  \hyperref[#1]{\textup{\tagform@{\ref*{#1}}}}%
}
\numberwithin{equation}{section}
\newtheorem{theorem}{Theorem}[section]
\newtheorem{proposition}[theorem]{Proposition}
\newtheorem{corollary}[theorem]{Corollary}
\newtheorem{lemma}[theorem]{Lemma}
\newtheorem{example}[theorem]{Example}
\theoremstyle{definition}
\newtheorem{definition}[theorem]{Definition}
\newtheorem{claim}[theorem]{Claim}
\newtheorem{remark}[theorem]{Remark}
\newtheorem*{claim*}{Claim}
\newcommand{\R}{\mathbb R}
\newcommand{\supp}{\mathop{\mathrm{supp}}}
\newcommand{\proj}{\mathrm{Proj}}
\newcommand{\Proj}{\operatorname{Proj}}
\newcommand{\Prox}{\operatorname{Prox}}
\newcommand{\argmin}{\operatorname{argmin}}
\newcommand*\diff{\mathop{}\!\mathrm{d}}
\newcommand{\scvx}{\alpha}
\newcommand{\prob}{\rho}
\newcommand{\tvec}{z}
\newcommand{\marg}{\mu}
\newcommand{\flow}{\pi}
\newcommand{\flowop}{\flow_{\textnormal{opt}}}
\newcommand{\m}{\mathrm m}
\newcommand{\mop}{\m_{\textnormal{opt}}}
\newcommand{\vel}{\mathrm v}
\newcommand{\velb}{\mathrm u}
\newcommand{\velop}{\vel_{\textnormal{opt}}}
\newcommand{\joint}{\gamma}
\newcommand{\jointop}{\joint_{\textnormal{opt}}}
\newcommand{\diffeo}{\Phi}
\newcommand{\reg}{\zeta}
\newcommand{\pspace}{\mathcal P}
\newcommand{\dd}{d}
\newcommand{\kk}{k}
\newcommand{\domainot}{\Omega}
\newcommand{\law}{\operatorname{Law}}
\newcommand{\test}{\eta}
\newcommand{\cost}{L}
\newcommand{\costt}{c}
\newcommand{\CC}{\mathcal C}
\newcommand{\actd}{\mathrm B}
\newcommand{\actdd}{\mathrm B}
\newcommand{\acts}{\Gamma}
\newcommand{\ID}{\mathrm D}
\newcommand{\base}{p}
\newcommand{\qm}{\nu}
\newcommand{\lag}{\mathcal L}
\newcommand{\lm}{\lambda}
\newcommand{\mapot}{\mathrm T_{\textnormal{opt}}}
\newcommand{\gridxc}{\mathcal G_x^{\textnormal{c}}}
\newcommand{\gridxs}{\mathcal G_x^{\textnormal{s}}}
\newcommand{\gridtc}{\mathcal G_t^{\textnormal{c}}}
\newcommand{\gridts}{\mathcal G_t^{\textnormal{s}}}
\newcommand{\fgridc}{\mathcal E^{\textnormal{c}}}
\newcommand{\fgrids}{\mathcal E^{\textnormal{s}}}
\newcommand{\Uc}{U^{\textnormal{c}}}
\newcommand{\Us}{U^{\textnormal{s}}}
\newcommand{\fc}{\flow^{\textnormal{c}}}
\newcommand{\fs}{\flow^{\textnormal{s}}}
\newcommand{\mc}{\m^{\textnormal{c}}}
\newcommand{\ms}{\m^{\textnormal{s}}}
\title{A dynamical formulation of multi-marginal optimal transport}
\author{Brendan Pass}
\address{Department of Mathematical and Statistical Sciences, 632 CAB, University of Alberta, Edmonton, Alberta}
\email{pass@ualberta.ca}
\author{Yair Shenfeld}
\address{Division of Applied Mathematics, Brown University, Providence, RI, USA}
\email{Yair\_Shenfeld@Brown.edu}
\begin{document}
\maketitle

\begin{abstract}

We present a primal-dual dynamical formulation of the multi-marginal optimal transport problem for (semi-)convex cost functions. Even in the two-marginal setting, this formulation applies to cost functions not covered by the classical dynamical approach of Benamou-Brenier. Our dynamical formulation yields a convex optimization problem, enabling the use of convex optimization tools to find quasi-Monge solutions of the static multi-marginal problem  for translation-invariant costs. We illustrate our results numerically with proximal splitting methods.
\end{abstract}

\section{Introduction}

\subsection{The two-marginal optimal transport problem}
\label{subsec:two_marg}
The Benamou-Brenier dynamical formulation of optimal transport   has had a tremendous impact on the field--both theoretically, in the form of the Otto calculus \cite[Chapter 8]{villani2021topics}, and numerically, by providing algorithms for computing optimal transport maps \cite[Chapter 6]{Santambrogio_book}. In contrast, an analogous dynamical formulation of the \emph{multi-marginal} optimal transport problem has not been found to date. (For different perspectives and special cases we refer to \cite{MR4208613, albergo2024multimarginal}.) To explain the challenge let us review the  Benamou-Brenier construction. The \emph{static} two-marginal optimal transport problem is to find a coupling $\joint$ of two given marginals $\marg_1,\marg_2$ on a domain $\domainot\subseteq\R^{\dd}$, which minimizes the cost
\begin{equation}
\label{eq:static_ot_intro}
\min_{\textnormal{ joint couplings $\joint$ of } \marg_1,\marg_2}\int_{\domainot^2}\costt(x_2-x_1)\diff \joint( x_1, x_2),
\end{equation}
where  $\costt:\R^{\dd}\to \R$ is a cost function. The problem \eqref{eq:static_ot_intro} is the Kantorovich formulation of the Monge optimal transport problem,
\begin{equation}
\label{eq:static_ot__monge_intro}
\min_{\textnormal{ transport maps $\diffeo$ between $\marg_1$ and $\marg_2$}}\int_{\domainot}\costt(\diffeo(x_1)-x_1)\diff\marg_1( x_1).
\end{equation}
The optimization problem \eqref{eq:static_ot_intro} is a relaxation of \eqref{eq:static_ot__monge_intro}, in the sense that couplings need not arise from transport maps. In turn, the Kantorovich formulation has the advantage of being a linear optimization problem since the objective in \eqref{eq:static_ot_intro} is linear in $\joint$, and the constraint that $\joint$ is a coupling of $\marg_1,\marg_2$ is also linear. On the other hand, the Monge formulation is non-linear in $\diffeo$, both in the objective and the constraint. In their fundamental work \cite{benamou2000computational}   Benamou and Brenier developed the following \emph{dynamical} formulation of the Monge problem. Given marginals $\marg_1,\marg_2$ consider all possible flows of probability measures $(\prob(t,\cdot))_{t\in [0,1]}$ connecting $\marg_1$ and $\marg_2$, which evolve according to a continuity equation with a driving vector field $\velb$,
\begin{equation}
\label{eq:cont_eq_intro}
\partial_t\prob(t,x)+\nabla(\prob(t,x)\velb(t,x))=0,\qquad \prob(0,\cdot)=\marg_1,\quad \prob(1,\cdot)=\marg_2,
\end{equation}
(we use $\nabla$ as the symbol for both gradient and divergence when the meaning is clear from context, and when it exists we denote by $\prob(t,x)$ the density of the measure $\prob(t,\cdot)$). Of all flows \eqref{eq:cont_eq_intro} we seek the optimal flow in the sense of
\begin{equation}
\label{eq:BB_intro}
\min_{\prob,\velb}\int_0^1\int_{\domainot} \costt(\velb(t,x))\diff\prob(t, x)\diff t\quad\textnormal{such that $(\prob,\velb)$ satisfy \eqref{eq:cont_eq_intro}}. 
\end{equation}
Benamou and Brenier showed\footnote{In their work \cite{benamou2000computational}  they only considered the case $\costt(x)=\frac{|x|^2}{2}$, but the argument can be extended to other convex costs \cite{MR2006306, buttazzo2009optimization}.} that when $\domainot$ is a compact convex set, $\costt$ is convex, and the marginals $\marg_1,\marg_2$ are regular, the Monge problem \eqref{eq:static_ot__monge_intro} and the dynamical problem \eqref{eq:BB_intro} coincide:
\begin{equation}
\label{eq:monge_dynamic_equiv}
\min_{(\prob,\velb)\textnormal{ satisfying \eqref{eq:cont_eq_intro}}}\int_0^1\int_{\domainot} \costt(\velb(t,x))\diff \prob(t,x)\diff t = \min_{\textnormal{ transport maps $\diffeo$ between $\marg_1$ and $\marg_2$}}\int_{\domainot}\costt(\diffeo(x_1)-x_1)\diff \marg_1(x_1).
\end{equation}
Furthermore, beyond the fact that the values of both optimization problems coincide, the dynamical problem \eqref{eq:BB_intro} can be used to find a solution to the Monge problem \eqref{eq:static_ot__monge_intro}. This is done through the Lagrangian perspective on the Eulerian flows \eqref{eq:cont_eq_intro}. Namely, given a vector field $\velb$ we consider the trajectories of individual particles, 
\begin{equation}
\label{eq:lagrangian_BB_intro}
\partial_t\diffeo(t,x)=\velb(t,\diffeo(t,x)),\qquad \diffeo(0,x)=x,
\end{equation}
which are known to satisfy the relation
\begin{equation}
\label{eq:Euler_lagrangian_intro}
\diffeo(t,\cdot)_{\sharp}\marg_1=\prob_t\quad \text{for all }t\in [0,1]. 
\end{equation}
In particular, any flow \eqref{eq:cont_eq_intro} gives rise to a transport map between $\marg_1$ and $\marg_2$, by solving \eqref{eq:lagrangian_BB_intro} and taking $\diffeo(1,\cdot)$ as the transport map between $\marg_1$ and $\marg_2$. If we now take a vector field $\velb_{\textnormal{opt}}$ which minimizes \eqref{eq:BB_intro}, then one can show that the corresponding $\diffeo_{\textnormal{opt}}(1,\cdot)$ is a solution of  \eqref{eq:static_ot__monge_intro}.  The intuition behind this fact is that given an optimal transport map $\diffeo_{\textnormal{opt}}$ of the Monge problem, the best trajectory for a particle starting at $x$ in the dynamical problem is to flow along straight lines,
\begin{equation}
\label{eq:best_lagrangian_flow_intro}
\diffeo_{\textnormal{opt}}(t,x):=(1-t)x+t\diffeo_{\textnormal{opt}}(x).
\end{equation}
Starting from \eqref{eq:best_lagrangian_flow_intro}, one can reverse engineer the vector field $\velb_{\textnormal{opt}}$ in \eqref{eq:lagrangian_BB_intro}, and then argue that it must be optimal in the dynamical problem \eqref{eq:BB_intro}. 

At this point, we only have another formulation of the Monge problem which, while interesting theoretically, is not necessarily easier to solve numerically.  The insight of \cite{benamou2000computational} was that the dynamical problem \eqref{eq:BB_intro} can be turned into a \emph{convex} optimization problem by a simple change of variables. Given $(\prob,\velb)$ define the momentum $\m:=\prob\velb$, and plug it into \eqref{eq:BB_intro} to get the problem
 \begin{equation}
\label{eq:BB_convex_intro}
\min_{\prob,\m}\int_0^1\int_{\domainot} \costt\left(\frac{\m(t,x)}{\prob(t,x)}\right)\diff\prob(t, x)\diff t\quad\textnormal{such that}\quad\partial_t\prob+\nabla \m=0,\quad \prob(0,\cdot)=\marg_1,\quad \prob(1,\cdot)=\marg_2. 
\end{equation}
When $\costt$ is convex the objective $(\prob,\m)\mapsto \int_0^1\int_{\domainot} \costt\left(\frac{\m(t,x)}{\prob(t,x)}\right)\diff\prob(t, x)\diff t$ is also convex, and further, the constraints are now linear in $(\prob,\m)$, which makes \eqref{eq:BB_convex_intro} a convex optimization problem. Thus, the dynamical formulation of the two-marginal optimal transport problem turned the non-convex Monge problem into a convex problem. This has been exploited already in \cite{benamou2000computational}, and further developed in \cite{papadakis2014optimal},  which used proximal splitting methods to numerically solve the convex problem \eqref{eq:BB_convex_intro}.

\subsection{A dynamical formulation of multi-marginal optimal transport}

Multi-marginal optimal transport is a natural generalization of the two-marginal optimal transport problem \eqref{eq:static_ot_intro}, where we consider $\kk\ge 2$ marginals $\marg_1,\ldots,\marg_{\kk}$ on $\domainot\subseteq\R^{\dd}$. It arises in a broad set of applications from matching theory \cite{carlier2010matching, chiappori2010hedonic}, to Wasserstein barycenters \cite{agueh2011barycenters}, and density functional theory \cite{buttazzo2012optimal, cotar2013density}, as well as other applications \cite[\S 3.3]{pass2015multi}. The  \emph{static} multi-marginal  optimal transport problem with cost $\cost:(\R^{\dd})^{\kk}\to \R$, for marginals $\marg_1,\ldots,\marg_{\kk}$ on $\domainot\subseteq\R^{\dd}$, is 
\begin{equation}
\label{eq:static_mmot_intro}
\min_{\textnormal{ joint couplings $\joint$ of } \marg_1,\ldots,\marg_{\kk}}\int_{\domainot^{\kk}}\cost(x_1,\ldots,x_{\kk})\diff\joint(x_1,\ldots, x_{\kk}).
\end{equation}
The static multi-marginal optimal transport problem also has a Monge formulation\footnote{The choice to distinguish $\marg_1$ as the source measure is just a convention and is not essential.}
\begin{equation}
\label{eq:monge_mmot_intro}
\min_{\textnormal{ transport maps $\diffeo_2,\ldots, \diffeo_{\kk}$ such that $(\diffeo_l)_{\sharp}\marg_1=\marg_l$ for $l=2,\ldots,\kk$}}\int_{\domainot}\cost(x_1,\diffeo_2(x_1)\ldots,\diffeo_{\kk}(x_1))\diff \marg_1( x_1).
\end{equation}
If we try to mimic the dynamical formulation of the two-marginal case in the multi-marginal setting we encounter  a problem. In the former case we need to find a flow between two measures $\marg_1$ and $\marg_2$, so the ``arrow of time" is clear, going from $\marg_1$ to $\marg_2$. In contrast, no such natural  directionality exists for the multi-marginal optimal transport problem.

The main result of this work is that we can in fact find a dynamical formulation of the multi-marginal optimal transport problem, which is equivalent to the static formulation \eqref{eq:static_mmot_intro}. Further, this dynamical problem can be formulated as a  \emph{convex} optimization problem, a fact whose ramifications will be explained below. The key insight behind our formulation is that, already in the two-marginal setting, there is a dynamical formulation, \emph{different from the  Benamou-Brenier formulation \eqref{eq:BB_intro}}, which is equivalent to the static formulation. It is this new  dynamical formulation for the two-marginal case which generalizes to the multi-marginal setting. Moreover, while the Benamou-Brenier formulation  applies only to convex cost functions which are of the form 
\begin{equation}
\label{eq:BB_cost_form}
\cost(x_1,x_2)=\costt(x_2-x_1),\qquad\textnormal{for some convex function}\qquad \costt:\R^{\dd}\to \R,
\end{equation}
our formulation will apply to a broader family  of cost functions.

In contrast to the flow  \eqref{eq:cont_eq_intro}, which is a flow of \emph{marginals}, our new dynamical formulation will be based upon  flows of \emph{couplings}. These flows of couplings will start at a source measure $\base$, and terminate at a coupling of the marginals $\{\marg_l\}_{l=1}^{\kk}$. The role of the source measure is intertwined with properties of the cost function via the following definition. 
\begin{definition}[$\base$-translation-invariant cost functions]
\label{def:diagonal_trans_inv_intro}
Given a probability measure $\base$ on $\domainot^k$ we say that a cost function $\cost:\R^{\kk\dd}\to \R$ is  \emph{$\base$-translation-invariant} if,
\begin{equation}
\label{eq:p_trans_inv_def_intro}
\cost(x-\tvec)=\cost(x)\quad\textnormal{for all} \quad x\in \domainot^{\kk}\quad\textnormal{and $\base$-almost-all}\quad \tvec\in \domainot^{\kk}.
\end{equation}
\end{definition}

To get some intuition for Definition \ref{def:diagonal_trans_inv_intro} let us consider  some examples.
\begin{example}
\label{ex:trivial_cost}
If $\cost$ is identically a constant then $\cost$ will be $\base$-translation-invariant for any $\base$.
\end{example}
\begin{example}
\label{ex:p_delta}
Suppose $0\in\domainot^{\kk}$ and let $\base=\delta_0$. Then any cost $\cost$ is $\base$-translation-invariant.
\end{example}
\begin{example}
\label{ex:trans_inv_cost}
If $\cost$ is translation-invariant, in the sense that
\begin{equation}
\label{eq:trans_inv_def_full_intro}
\cost(x_1-\xi,\ldots,x_{\kk}-\xi)=\cost(x_1,\ldots,x_{\kk})\quad \quad\textnormal{for all}\quad x_1,\ldots,x_k\in \domainot\quad\textnormal{and}\quad \xi\in \R^{\dd},
\end{equation}
then $\cost$ is $\base$-translation-invariant for any $\base$ of the form
\begin{equation}
\label{eq:diagonal_def_intro}
\base=\law(\xi,\ldots,\xi) \quad \textnormal{with}\quad \textnormal{ $\xi\sim \qm$ for some probability measure $\qm$ over $\domainot$}.
\end{equation}
\end{example}

\begin{example}
Let $E\subseteq \R^{\kk\dd}$ be a subspace, and let $\proj_E$ be the orthogonal projection onto $E$, i.e.,  $\proj_E(x):=\argmin_{y\in E} |x-y|^2$. Suppose that $\cost$ is of the form 
\[
\cost(x)=\tilde{\cost}(\proj_E(x))\qquad\textnormal{for a function $\tilde{\cost}:E\to \R$}. 
\]
Then $\cost$ is $\base$-translation-invariant for any $\base$ such that $\supp(\base)\subseteq E^{\perp}$.
\end{example}

We can now state our main result.  

\begin{theorem}[Informal; Theorem \ref{thm:static_dynamic_equiv}]
\label{thm:static_dynamic_equiv_intro}
Let $\domainot\subseteq \R^{\dd}$ be a compact convex set, and let $\marg_1,\ldots,\marg_{\kk}$ be probability measures on $\domainot$. Fix a probability measure $\base$ on $\domainot^{\kk}$, and let  $\cost:\R^{\kk\dd}\to \R$ be a convex function which is $\base$-translation-invariant. Then,
\begin{align}
\label{eq:static_dynamic_equiv_informal}
\begin{split}
&\min_{\textnormal{joint couplings $\joint$ of } \marg_1,\ldots,\marg_{\kk}}\int_{\domainot^{\kk}}\cost(x_1,\ldots,x_{\kk})\diff \joint( x_1,\ldots, x_{\kk})=\min_{\flow,\vel}\int_0^1\int_{\domainot^{\kk}}\cost(\vel(t,x))\diff\flow(t, x)\diff t,
\end{split}
\end{align}
where the minimum on the right-hand side of \eqref{eq:static_dynamic_equiv_informal} is over $(\flow,\vel)$ satisfying
\begin{equation}
\label{eq:cont_eq_coupling_thm_intro}
\partial_t\flow+\nabla(\flow\vel)=0,\quad \flow(0,\cdot)=\base,\quad \textnormal{marginals of $\flow(1,\cdot)$ are $\marg_1,\ldots,\marg_{\kk}$}.
\end{equation}
\end{theorem}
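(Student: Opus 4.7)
The plan is to establish the equivalence by proving the two inequalities separately, with the key mechanism in both directions being that $\base$-translation-invariance lets the cost $\cost$ ``forget'' the starting point of a particle, so that straight-line interpolation between the source $\base$ and a coupling of the target marginals produces exactly the right value.

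For the direction \emph{dynamical $\le$ static}, I would take a minimizer $\joint$ of \eqref{eq:static_mmot_intro} and independent random vectors $Z \sim \base$, $X \sim \joint$ on $\domainot^{\kk}$. Consider the linear interpolation $\diffeo(t) := (1-t) Z + t X$, which stays in $\domainot^{\kk}$ by convexity of $\domainot$. Set $\flow(t, \cdot) := \law(\diffeo(t))$, so $\flow(0, \cdot) = \base$ and $\flow(1, \cdot) = \joint$ has the prescribed marginals $\marg_1, \ldots, \marg_{\kk}$. Define the Eulerian velocity
\begin{equation*}
\vel(t, x) := \mathbb{E}[X - Z \mid \diffeo(t) = x],
\end{equation*}
which together with $\flow$ satisfies the continuity equation \eqref{eq:cont_eq_coupling_thm_intro} by the standard Lagrangian-to-Eulerian computation on test functions. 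Since $\dot{\diffeo} \equiv X - Z$, $\base$-translation-invariance gives $\cost(X - Z) = \cost(X)$ almost surely, and Jensen's inequality applied conditionally on $\diffeo(t) = x$ yields
\begin{equation*}
\int_0^1 \!\! \int_{\domainot^{\kk}} \!\! \cost(\vel(t, x)) \, \diff\flow(t, x) \, \diff t \le \int_0^1 \mathbb{E}[\cost(X - Z)] \, \diff t = \int_{\domainot^{\kk}} \cost \, \diff\joint.
\end{equation*}

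For the reverse direction \emph{static $\le$ dynamical}, I would start with any admissible pair $(\flow, \vel)$ of finite cost and invoke Ambrosio's superposition principle to lift it to a probability measure $\test$ on $C([0,1], \domainot^{\kk})$ such that $(e_t)_\sharp \test = \flow(t, \cdot)$ for every $t \in [0,1]$ and $\test$-almost every curve $\omega$ is absolutely continuous with $\dot\omega(t) = \vel(t, \omega(t))$ for a.e.\ $t$. Set $Z := e_0$, $X := e_1$; then $Z \sim \base$ and $X$ is a coupling of $\marg_1, \ldots, \marg_{\kk}$. Jensen along each path gives
\begin{equation*}
\cost(X - Z) = \cost\!\left(\int_0^1 \dot\omega(t) \, \diff t\right) \le \int_0^1 \cost(\dot\omega(t)) \, \diff t,
\end{equation*}
and integration against $\test$ together with $\base$-translation-invariance ($\cost(X - Z) = \cost(X)$ a.s.) produces
\begin{equation*}
\int_{\domainot^{\kk}} \cost \, \diff\bigl(\law(X)\bigr) \le \int_0^1 \!\! \int_{\domainot^{\kk}} \!\! \cost(\vel(t, x)) \, \diff\flow(t, x) \, \diff t.
\end{equation*}
Taking $\joint := \law(X)$ as a competitor in \eqref{eq:static_mmot_intro} closes the inequality.

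The main obstacle I expect is the careful application of the superposition principle on the product space $\domainot^{\kk}$: one needs the integrability of $\cost(\vel)$ with respect to $\flow$ (which can be assumed without loss of generality) to guarantee that $\test$-almost every path is absolutely continuous with the claimed velocity, and one must also justify the endpoint identifications $(e_0)_\sharp \test = \base$ and the marginal structure of $(e_1)_\sharp \test$. A secondary technical point is checking measurability and the weak continuity-equation identity for the conditional-expectation velocity field in the upper bound, which should follow from standard disintegration arguments for the joint law of $(Z, X)$.
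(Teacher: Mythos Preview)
Your proposal is correct and the overall strategy matches the paper's: two inequalities, with linear interpolation plus a conditional-expectation velocity (and Jensen together with $\base$-translation-invariance) for dynamic $\le$ static, and the superposition principle plus Jensen along paths for the converse. The only substantive difference is in the static $\le$ dynamic direction: the paper first mollifies $(\flow,\m)$ by a smooth probability kernel, shows the dynamical cost decreases under this convolution (their Lemma~2.9), applies the probabilistic representation to the mollified flow so that the velocity is continuous and bounded, and then passes to the limit along kernels shrinking to a Dirac, extracting by compactness a weakly convergent subsequence of the joint law of $(\omega(0),\omega(1))$. Your direct application of superposition is shorter and should work---once $(e_t)_\sharp\test=\flow(t,\cdot)$ as measures, the change-of-variables identity extends from continuous bounded test functions to all Borel integrands, and Fubini is justified by the $L^p$-integrability of $|\vel|$ built into the definition of the admissible class (which is what the superposition principle actually requires, not integrability of $\cost(\vel)$ as you wrote). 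The paper's mollification step trades a longer limiting argument for the convenience of a continuous bounded velocity field, and also makes the compactness argument used to identify the endpoint law slightly more explicit.
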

We will elaborate below on the interplay between the source measure $\base$ and the  properties of the cost $\cost$. But first, let us discuss Theorem \ref{thm:static_dynamic_equiv_intro} in general terms. The optimization problem on the right-hand side of \eqref{eq:static_dynamic_equiv_informal} is of a completely different nature compared to the Benamou-Brenier optimization problem \eqref{eq:BB_intro}. In the latter, the source $\marg_1$ and target $\marg_2$ are fixed, and the flow $(\prob(t,\cdot))_{t\in [0,1]}$ interpolates in the space of marginals between $\marg_1$ and $\marg_2$ in an optimal way. In contrast, in \eqref{eq:static_dynamic_equiv_informal} the target $\flow(1,\cdot)$ is not fixed, but only its marginals $\marg_1,\ldots,\marg_{\kk}$ are. The dynamical optimization problem in \eqref{eq:static_dynamic_equiv_informal} is to find a flow $(\flow(t,\cdot))_{t\in [0,1]}$ which starts at time $t=0$ at the source measure $\prob$, whose marginals do not necessarily match $\marg_1,\ldots,\marg_{\kk}$, and gradually deform it so that at time $t=1$ the marginals of $\flow(1,\cdot)$ agree with $\marg_1,\ldots,\marg_{\kk}$. Since the deformation has to be done in an optimal way, in the sense of minimizing the cost of the driving vector field $\vel$, the resulting coupling will be optimal. Indeed, we will show that
\begin{equation}
\begin{split}
\label{eq:optimizer_static_dynamic_intro}
& \flowop\in\argmin_{\flow,\vel}\int_0^1\int_{\domainot^{\kk}}\cost(\vel(t,x))\diff \flow(t,x)\diff t\quad\textnormal{such that $(\flow,\vel)$ satisfy \eqref{eq:cont_eq_coupling_thm_intro}}\\
 &\Longrightarrow \\
 & \flowop(1,\cdot)\in\argmin_{\textnormal{joint couplings $\joint$ of } \marg_1,\ldots,\marg_{\kk}}\int_{\domainot^{\kk}}\cost(x_1,\ldots,x_{\kk})\diff \joint( x_1,\ldots, x_{\kk}).
\end{split}
\end{equation}
In words, solutions to the dynamical problem in \eqref{eq:static_dynamic_equiv_informal} yield solutions to the static  problem in \eqref{eq:static_dynamic_equiv_informal}. (In the proof of Theorem \ref{thm:static_dynamic_equiv} we will see that the arrow in \eqref{eq:optimizer_static_dynamic_intro} can be reversed.)

With Theorem \ref{thm:static_dynamic_equiv_intro} in hand, we can now trade off the generality of the source measures $\base$ and the cost functions $\cost$. 
\begin{corollary}[Informal; Corollary \ref{cor:static_dynamic_equiv}]
\label{cor:static_dynamic_equiv_intro}
Let $\domainot\subseteq\R^{\dd}$ be a compact convex set, and let $\marg_1,\ldots,\marg_{\kk}$ be probability measures on $\domainot$.  Suppose that one of the following holds. 
\begin{enumerate}[(i)]
\item  \label{enum:delta_intro}  $\cost:\R^{\kk\dd}\to \R$ is convex and $\base=\delta_0$ where we assume $0\in\domainot^{\kk}$. 
\item \label{enum:tran_inv_intro}   $\cost:\R^{\kk\dd}\to \R$ is convex, satisfying
\[
\cost(x_1-\xi,\ldots,x_{\kk}-\xi)=\cost(x_1,\ldots,x_{\kk})\quad \quad\textnormal{for all}\quad x_1,\ldots,x_k\in \domainot \quad\textnormal{and}\quad \xi\in \R^{\dd},
\]
and  $\base$  is any probability measure on $\domainot^{\kk}$ of the form 
\[
\base=\law(\xi,\ldots,\xi),\qquad \xi\sim \qm\quad\textnormal{for some probability measure $\qm$ on $\domainot$}.
\]
\item \label{enum:proj_intro}
 $\cost:\R^{\kk\dd}\to \R$ is of the form
\[
\cost(x)=\tilde{\cost}(\proj_E(x))\qquad\textnormal{for a convex function $\tilde{\cost}:E\to \R$}, 
\]
and $\base$ is a measure on $\domainot$ such that $\supp(\base)\subseteq E^{\perp}$.
\end{enumerate}
In each of the cases \eqref{enum:delta_intro}-\eqref{enum:tran_inv_intro}-\eqref{enum:proj_intro}  we have
\begin{equation}
\label{eq:static_dynamic_equiv_trans_inv_intro}
\begin{split}
&\min_{\textnormal{joint couplings $\joint$ of } \marg_1,\ldots,\marg_{\kk}}\int_{\domainot^{\kk}}\cost(x_1,\ldots,x_{\kk})\diff \joint( x_1,\ldots, x_{\kk})=\min_{\flow,\vel}\int_0^1\int_{\domainot^{\kk}}\cost(\vel(t,x))\diff\flow(t, x)\diff t,
\end{split}
\end{equation}
where the minimum on the right-hand side of \eqref{eq:static_dynamic_equiv_trans_inv_intro} is over $(\flow,\vel)$ satisfying
\begin{equation}
\label{eq:cont_eq_coupling_intro}
\partial_t\flow+\nabla(\flow\vel)=0,\quad \flow(0,\cdot)=\base,\quad \textnormal{marginals of $\flow(1,\cdot)$ are $\marg_1,\ldots,\marg_{\kk}$}.
\end{equation}
\end{corollary}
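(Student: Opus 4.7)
The plan is to deduce Corollary \ref{cor:static_dynamic_equiv_intro} directly from Theorem \ref{thm:static_dynamic_equiv_intro} by verifying, in each of the three cases, that the pair $(\cost,\base)$ satisfies the $\base$-translation-invariance condition of Definition \ref{def:diagonal_trans_inv_intro}. Once this compatibility is established, the convexity assumptions on $\cost$ (in cases (i) and (ii) directly, in case (iii) via $\tilde{\cost}$), together with the compactness and convexity of $\domainot$, place us within the hypotheses of Theorem \ref{thm:static_dynamic_equiv_intro}, and the conclusion is immediate.

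For case (i), the verification is trivial: when $\base=\delta_0$, the condition \eqref{eq:p_trans_inv_def_intro} reduces to the identity $\cost(x-0)=\cost(x)$, which holds for any cost, as already recorded in Example \ref{ex:p_delta}. For case (ii), the support of $\base=\law(\xi,\ldots,\xi)$ lies in the diagonal of $\domainot^{\kk}$; for any $\tvec=(\xi,\ldots,\xi)$ in this support, the translation-invariance \eqref{eq:trans_inv_def_full_intro} gives $\cost(x-\tvec)=\cost(x_1-\xi,\ldots,x_{\kk}-\xi)=\cost(x)$, matching Example \ref{ex:trans_inv_cost}. For case (iii), I would use linearity of the orthogonal projection: for $\tvec\in E^{\perp}$ we have $\proj_E(\tvec)=0$ and hence $\proj_E(x-\tvec)=\proj_E(x)$, so $\cost(x-\tvec)=\tilde{\cost}(\proj_E(x-\tvec))=\tilde{\cost}(\proj_E(x))=\cost(x)$ for $\base$-a.e.\ $\tvec$.

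Only one subtlety requires attention, and this is where I expect the main (modest) obstacle to lie: in case (iii) one must confirm that the assumption on $\cost$ is enough to invoke Theorem \ref{thm:static_dynamic_equiv_intro}, which was stated for convex cost functions on all of $\R^{\kk\dd}$. Since $\tilde{\cost}:E\to\R$ is convex and $\proj_E$ is linear, the composition $\cost=\tilde{\cost}\circ\proj_E:\R^{\kk\dd}\to\R$ is convex on the full space, so no extra work is needed; I would simply remark on this to justify applying Theorem \ref{thm:static_dynamic_equiv_intro}.

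Having verified $\base$-translation-invariance in each case, I would conclude by invoking Theorem \ref{thm:static_dynamic_equiv_intro} to obtain \eqref{eq:static_dynamic_equiv_trans_inv_intro} with the continuity-equation constraint \eqref{eq:cont_eq_coupling_intro}. The whole argument is essentially a bookkeeping reduction to the examples following Definition \ref{def:diagonal_trans_inv_intro}, so the proof will be short.
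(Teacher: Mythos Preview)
Your proposal is correct and matches the paper's approach: the paper presents Corollary \ref{cor:static_dynamic_equiv} as ``an immediate corollary of Theorem \ref{thm:static_dynamic_equiv}'' without further proof, the $\base$-translation-invariance in each case having already been verified in Examples \ref{ex:p_delta}--\ref{ex:trans_inv_cost} and the example following them. Your additional remark that $\cost=\tilde{\cost}\circ\proj_E$ is convex on all of $\R^{\kk\dd}$ is a reasonable point to spell out, though the paper leaves it implicit.
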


\begin{remark}[Semi-convex cost functions; Corollary \ref{cor:static_dynamic_equiv_semicvx}]
\label{rem:semicvx_intro}
When $\base=\delta_0$ we can in fact relax the convexity assumption on the cost. In particular, if $\cost$ is $\scvx$-semi-convex, i.e., 
\begin{equation}
\label{eq:def_semicvx_intro}
\textnormal{there exists a constat $\scvx\ge 0$ such that $\R^{\kk\dd}\ni x\mapsto \cost(x)+\scvx\frac{|x|^2}{2}$ is convex},
\end{equation}
then Corollary \ref{cor:static_dynamic_equiv_intro} implies that 
\begin{align}
\label{eq:static_dynamic_equiv_semicvx_informal}
\begin{split}
&\min_{\textnormal{joint couplings $\joint$ of } \marg_1,\ldots,\marg_{\kk}}\int_{\domainot^{\kk}}\cost(x_1,\ldots,x_{\kk})\diff \joint( x_1,\ldots, x_{\kk})\\
&=\min_{\flow,\vel}\int_0^1\int_{\domainot^{\kk}}\left[\cost(\vel(t,x))+\scvx\frac{|\vel(t,x)|^2}{2} \right]\diff\flow(t, x)\diff t-\frac{\scvx}{2}\sum_{l=1}^{\kk}\int_{\domainot} |x_l|^2\diff \marg_l(x).
\end{split}
\end{align}
In other words, when $\base=\delta_0$, the static multi-marginal optimal transport problem has a dynamical formulation, even when the cost $\cost$ is only semi-convex; note that the term $\frac{\scvx}{2}\sum_{l=1}^{\kk}\int_{\domainot} |x_l|^2\diff \marg_l(x)$ depends only on the marginals $\marg_1,\ldots,\marg_{\kk}$. Moreover, 
\begin{equation}
\begin{split}
\label{eq:optimizer_static_dynamic_semiconvx_intro}
& \flowop\in\argmin_{(\flow,\vel)  \textnormal{ satisfy } \eqref{eq:cont_eq_coupling_intro}}\int_0^1\int_{\domainot^{\kk}}\left[\cost(\vel(t,x))+\scvx\frac{|\vel(t,x)|^2}{2} \right]\diff\flow(t, x)\diff t-\frac{\scvx}{2}\sum_{l=1}^{\kk}\int_{\domainot} |x_l|^2\diff \marg_l(x)\\
 &\Longrightarrow \\
 & \flowop(1,\cdot)\in\argmin_{\textnormal{joint couplings $\joint$ of } \marg_1,\ldots,\marg_{\kk}}\int_{\domainot^{\kk}}\cost(x_1,\ldots,x_{\kk})\diff \joint( x_1,\ldots, x_{\kk}).
\end{split}
\end{equation}
\end{remark}

The power of Corollary \ref{cor:static_dynamic_equiv_intro}\eqref{enum:delta_intro} is that if we restrict to a point-mass source measure, we can handle all (semi-)convex cost functions. This result is new even in the two-marginal setting, since the Benamou-Brenier formulation \eqref{eq:monge_dynamic_equiv} requires convex cost functions which are translation-invariant as in \eqref{eq:trans_inv_def_full_intro}. On the other hand, if we are willing to make the  assumptions in Corollary \ref{cor:static_dynamic_equiv_intro}\eqref{enum:tran_inv_intro}, which hold for the common costs of the form
\[
\cost(x_1,\ldots,x_{\kk})=\sum_{i,j=1}^{\kk}\costt(x_i-x_j), 
\]
for a cost function $\costt:\domainot\to \R$, then we can consider a much larger class of source measures. The setting of Corollary \ref{cor:static_dynamic_equiv_intro}\eqref{enum:proj_intro} can be thought of as a middle ground between \eqref{enum:delta_intro} and  \eqref{enum:tran_inv_intro}. This  flexibility in choosing the source measure is beneficial when performing numerics. 

Let us conclude this section with an explicit  example demonstrating the difference between the Benamou-Brenier dynamical problem and our dynamical problem, already in the two-marginal translation-invariant cost setting.

\begin{example}[Quadratic cost]
\label{ex:quadratic}
One of the most classical costs for the two-marginal optimal transport problem is the quadratic cost,
\begin{equation}
\label{eq:quadratic_2marg_intro}
\costt(x_1-x_2)=\frac{|x_1-x_2|^2}{2},
\end{equation}
where $|\cdot|$ is the Euclidean norm in $\R^{\dd}$. The Benamou-Brenier dynamical problem \eqref{eq:BB_intro} then reads

\begin{equation}
\label{eq:BB_quadratic_intro}
\min_{\prob,\velb}\int_0^1\int_{\domainot} \frac{|\velb(t,x)|^2}{2}\diff \prob(t,x)\diff t,\qquad \partial_t\prob+\nabla(\prob\velb)=0,\qquad \prob(0,\cdot)=\marg_1,\quad \prob(1,\cdot)=\marg_2.
\end{equation}
On the other hand, our dynamical problem in \eqref{eq:static_dynamic_equiv_informal} reads, with $\cost(x_1,x_2)=\costt(x_1-x_2)=\frac{|x_1-x_2|^2}{2}$, and $\vel=(\vel_1,\vel_2)$,
\begin{equation}
\label{eq:mmot_2marg_quadratic_intro}
\min_{\flow,\vel}\int_0^1\int_{\domainot^2} \frac{|\vel_1(t,x)-\vel_2(t,x)|^2}{2}\diff\flow(t, x)\diff t,
\end{equation}
over $(\flow,\vel)$ satisfying
\begin{equation}
\label{eq:mmot_2marg_quadratic_constraint_intro}
\partial_t\flow+\nabla(\flow\vel)=0,\qquad \flow(0,\cdot)=\base,\qquad \textnormal{marginals of $\flow(1,\cdot)$ are $\marg_1,\marg_2$},
\end{equation}
where $\base$ is any measure as in \eqref{eq:diagonal_def_intro}. 

The quadratic cost generalizes to the multi-marginal setting with the cost $\cost:\R^{\kk\dd}\to [0,\infty)$,
\begin{equation}
\label{eq:quadratic_cost_mmot_intro}
\cost(x_1,\ldots,x_{\kk})=\sum_{i,j=1}^{\kk}\frac{|x_i-x_j|^2}{2},
\end{equation}
and the dynamical formulation \eqref{eq:mmot_2marg_quadratic_intro} then generalizes mutatis mutandis as, with $\vel=(\vel_1,\ldots, \vel_{\kk})$, 
\begin{equation}
\label{eq:mmot_quadratic_intro}
\min_{\flow,\vel}\sum_{i,j=1}^{\kk}\int_0^1\int_{\domainot^{\kk}} \frac{|\vel_i(t,x)-\vel_j(t,x)|^2}{2}\diff\flow(t, x)\diff t,
\end{equation}
over $(\flow,\vel)$ satisfying
\begin{equation}
\label{eq:mmot_quadratic_constraint_intro}
\partial_t\flow+\nabla(\flow\vel)=0,\qquad \flow(0,\cdot)=\base,\qquad \textnormal{marginals of $\flow(1,\cdot)$ are $\marg_1,\ldots,\marg_{\kk}$},
\end{equation}
where $\base$ is any measure as in \eqref{eq:diagonal_def_intro}.
\end{example}

\subsection{Convex formulations and quasi-Monge solutions}
As explained in Section \ref{subsec:two_marg}, a numerical advantage of the Benamou-Brenier dynamical formulation is that after a change of variables it becomes a convex optimization problem \eqref{eq:BB_convex_intro}, which in turns yields solutions to the Monge problem \eqref{eq:static_ot__monge_intro}; see Equations \eqref{eq:lagrangian_BB_intro}-\eqref{eq:Euler_lagrangian_intro} and the subsequent discussion. It turns out that the same change of variables can be applied to our dynamical formulation of multi-marginal optimal transport  to yield a convex optimization problem. Indeed, setting $\m:=\flow\vel$ we see that the dynamical formulation of Theorem \ref{thm:static_dynamic_equiv_intro} becomes
\begin{equation}
\label{coupling_flow_convex_intro}
\begin{split}
&\min_{\flow,\m}\int_0^1\int_{\domainot^{\kk}}\cost\left(\frac{\diff \m(t,x)}{\diff\flow(t, x)}\right)\diff\flow(t, x)\diff t\\
&\textnormal{such that}\quad \partial_t\flow+\nabla\m=0,\quad \flow(0,\cdot)=\base,\quad \textnormal{marginals of $\flow(1,\cdot)$ are $\marg_1,\ldots,\marg_{\kk}$}.
\end{split}
\end{equation}
The convexity of $\cost$ ensures that the objective $(\flow,\m)\mapsto \int_0^1\int_{\domainot^{\kk}}\cost\left(\frac{\diff\m(t, x)}{\diff\flow(t, x)}\right)\diff \flow(t,x)\diff t$ is convex, and the constraints on $(\flow,\m)$ are linear, so  \eqref{coupling_flow_convex_intro} is a convex optimization problem. 

Unlike the two-marginal setting where the existence of Monge solutions is guaranteed under fairly broad conditions, the existence of Monge solutions in multi-marginal optimal transport is much less understood. Specifically, under the assumptions on the cost function in Theorem \ref{thm:static_dynamic_equiv_intro}, it is an open problem whether Monge solutions even exist. On the other hand, \emph{quasi-Monge} solutions always exist. By a ``quasi-Monge solution" to the  static multi-marginal optimal transport problem \eqref{eq:static_mmot_intro} we mean that there exists a probability measure $\qm$ on $\domainot\subseteq\R^{\dd}$, and transport maps $\diffeo_1,\ldots,\diffeo_{\kk}:\domainot\to\domainot$, such that
\begin{equation}
\label{eq:quasiMonge_def_intro}
(\diffeo_l)_{\sharp}\qm = \marg_l\quad\textnormal{for all}\quad l=1,\ldots,\kk,
\end{equation}
and 
\begin{equation}
\label{eq:quasiMonge_opt_intro}
\int_{\domainot}\cost(\diffeo_1(\xi),\ldots,\diffeo_{\kk}(\xi))\diff\qm( \xi) =\min_{\textnormal{ joint couplings $\joint$ of } \marg_1,\ldots,\marg_{\kk}}\int_{\domainot^{\kk}}\cost(x_1,\ldots,x_{\kk})\diff  \joint(x_1,\ldots, x_{\kk}).
\end{equation}
Note that a quasi-Monge solution will be a Monge solution if $\qm=\marg_1$ and $\diffeo_1$ is the identity mapping. The concept of quasi-Monge solutions was developed by Friesecke and V\"{o}gler \cite{friesecke2018breaking} (see also \cite[\S 4.3.14]{friesecke2022strong}) in the context of \emph{discrete} multi-marginal optimal transport problems, where the marginals are all identical, $\marg_1=\cdots=\marg_{\kk}$, a setting which arises in density functional theory with the non-convex repulsive Coulomb cost\footnote{Due to the assumption that the marginals are all identical, and the cost is symmetric with respect to permutations of the variables, the definition of  quasi-Monge solutions in \cite{friesecke2018breaking} is slightly different, but the essential idea is the same.}. We use the same terminology for the continuous setting and extend it to settings where the marginals $\marg_1,\ldots,\marg_{\kk}$ are different. While in the discrete setting of \cite{friesecke2018breaking} the existence of optimal quasi-Monge solutions is non-trivial, in our continuous setting the existence of optimal quasi-Monge solutions is more straightforward. Indeed, given a non-atomic  measure $\qm$ on $\domainot$, and a minimizer $\jointop$ of the right-hand side of \eqref{eq:quasiMonge_opt_intro}, it follows from \cite[Theorem A]{MR2288266} that there exists a transport map $\diffeo=(\diffeo_1,\ldots,\diffeo_{\kk}):\domainot\to \domainot^{\kk}$ such that 
\begin{equation}
\label{eq:quasi_monge_exist_intro}
\law(\diffeo_1(\xi),\ldots,\diffeo_{\kk}(\xi))=\jointop\qquad\textnormal{where}\qquad \xi\sim \qm. 
\end{equation}
By construction, \eqref{eq:quasi_monge_exist_intro} gives an optimal quasi-Monge solution to the static multi-marginal optimal transport problem. 

Let us now explain heuristically the relation between solutions to our dynamical multi-marginal optimal transport problem and quasi-Monge solutions, in the setting of Corollary \ref{cor:static_dynamic_equiv_intro}\eqref{enum:tran_inv_intro}. Given an optimal vector field  $\velop$ of the dynamical problem, solve the differential equation
\begin{equation}
\label{eq:qm_lagrange_intro}
\partial_t\diffeo_{\textnormal{opt}}(t,x)=\velop(\diffeo_{\textnormal{opt}}(t,x)),\qquad \diffeo_{\textnormal{opt}}(0,x)=x. 
\end{equation}
As the source measure $\base$ is of the form
\begin{equation}
\label{eq:qm_diagona_intro}
p=\law(\xi,\ldots,\xi) \qquad\textnormal{where}\qquad \xi\sim \qm, \quad\textnormal{with $\qm$ non-atomic probability measure on $\domainot$},
\end{equation}
we find that
\begin{equation}
\label{eq:qm_lagrange_t1_intro}
\diffeo_{\textnormal{opt}}(1,\cdot)_{\sharp}\base=\flowop(1,\cdot),
\end{equation}
where we recall  that $\flowop(1,\cdot)$ is a minimizer of the  static multi-marginal  optimal transport problem; see Equation \eqref{eq:optimizer_static_dynamic_intro}. Thus, $\flowop(1,\cdot)$ is precisely of the form \eqref{eq:quasi_monge_exist_intro}, i.e., solutions to the \emph{dynamical} multi-marginal  optimal transport problem are quasi-Monge solutions to  the   \emph{static} multi-marginal optimal transport problem. This correspondence mirrors in the multi-marginal setting the relation between solutions to the two-marginal  Benamou-Brenier dynamical problem and Monge solutions to the static two-marginal  optimal transport problem. Since Monge solutions may not exist in the multi-marginal setting, we need to resort to quasi-Monge solutions, which are exactly captured by our  dynamical  multi-marginal optimal transport problem. Further, just as in the two-marginal case the Benamou-Brenier formulation yields a convex optimization problem which solves the non-convex Monge problem, \eqref{coupling_flow_convex_intro} gives  a convex formulation of our dynamical  multi-marginal optimal transport problem which solves the non-convex quasi-Monge problem (for translation-invariant cost functions as in  \eqref{eq:trans_inv_def_full_intro}).

While \eqref{eq:qm_lagrange_intro} sheds light on the relation between our dynamical formulation and quasi-Monge solutions, we cannot in fact use the relation \eqref{eq:qm_lagrange_t1_intro} to construct quasi-Monge solutions. The reason is that,  generically, the regularity of $\velop$ will be low, since we use  flows of couplings which can be supported on low-dimensional sets, so \eqref{eq:qm_lagrange_intro} will not have a classical solution. On the theoretical side, we will employ  some technical tools to overcome this challenge. On the numerical side, rather than attempting to solve  \eqref{eq:qm_lagrange_intro} to get an estimate of the transport map $\diffeo_{\textnormal{opt}}$, it is easier to simply estimate the transport map from $\flowop(1,\cdot)$ by taking conditional expectations; see Section \ref{subsec:numerics}. 

To conclude this section, in Figure \ref{fig:3marginals} and  Figure \ref{fig:transport_maps} we consider the example of $\kk=3$ marginals in dimension $\dd=1$ with the quadratic cost \eqref{eq:quadratic_cost_mmot_intro}. In this setting there are in fact Monge solutions, which can be computed analytically, and we compare them to the transport maps obtained from $\flowop(1,\cdot)$, which we compute using a proximal splitting  method; see Section \ref{subsec:numerics} for details. 

\begin{figure}[htbp]
  \centering

  \begin{subfigure}[t]{0.31\textwidth}
    \centering
    \includegraphics[width=\linewidth]{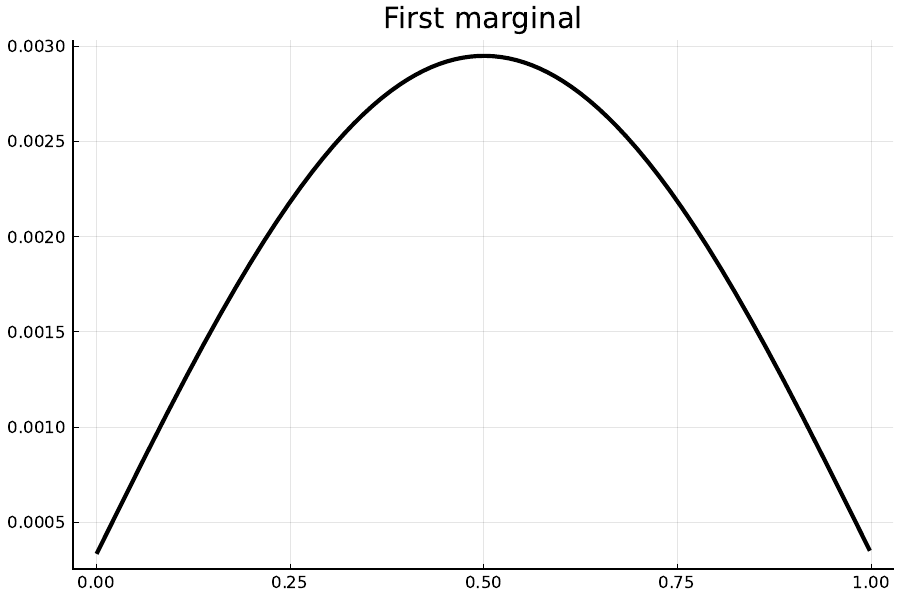}
    \caption{Density of $\marg_1$}
    \label{fig:mu1}
  \end{subfigure}
  \begin{subfigure}[t]{0.31\textwidth}
    \centering
    \includegraphics[width=\linewidth]{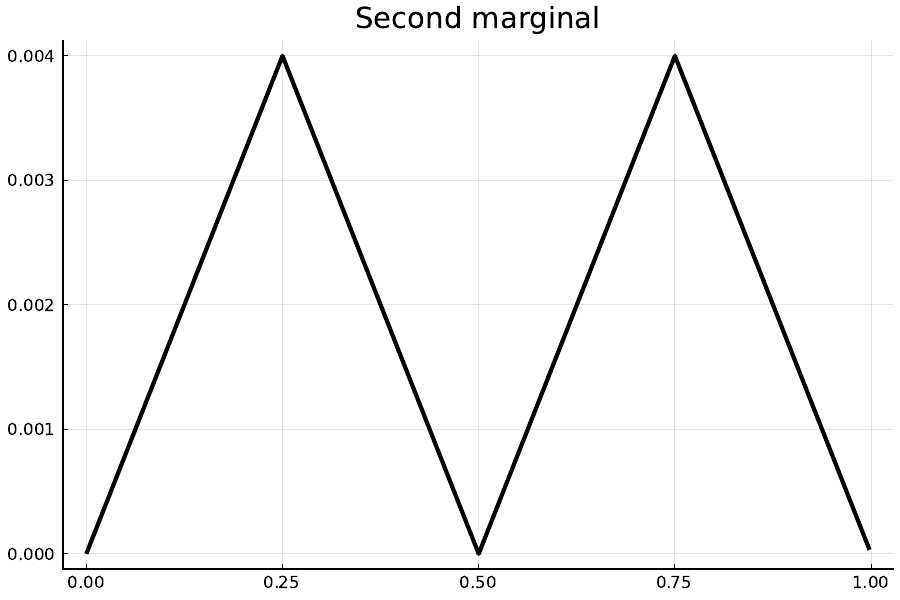}
    \caption{Density of $\marg_2$}
    \label{fig:mu2}
  \end{subfigure}
  \begin{subfigure}[t]{0.31\textwidth}
    \centering
    \includegraphics[width=\linewidth]{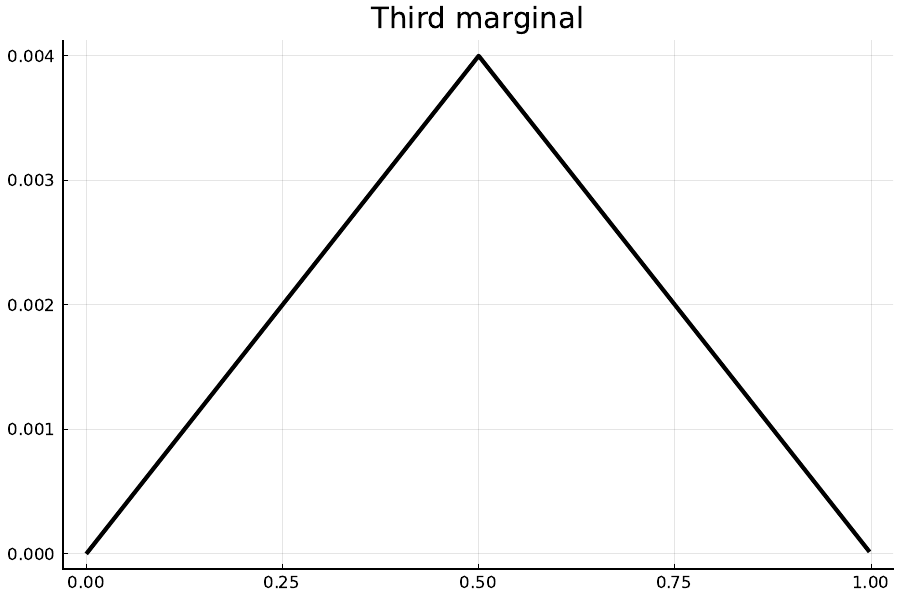}
    \caption{Density of $\marg_3$}
    \label{fig:mu3}
  \end{subfigure}

  \caption{In our numerical experiment we solve the   dynamical multi-marginal optimal transport with quadratic cost \eqref{eq:quadratic_cost_mmot_intro} for the 3 marginals $\marg_1,\marg_2,\marg_3$.}
  \label{fig:3marginals}
\end{figure}

\begin{figure}[htbp]
  \centering

  \begin{subfigure}[t]{0.48\textwidth}
    \centering
    \includegraphics[width=\linewidth]{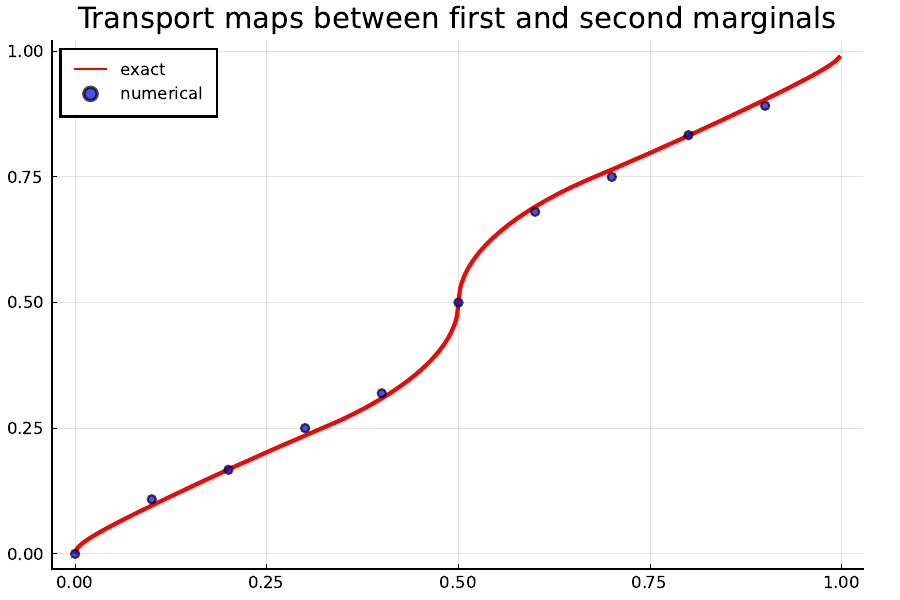}
    \caption{Analytic and numerical transport maps between $\marg_1$ and $\marg_2$.}
    \label{fig:transport12}
  \end{subfigure}
  \hfill
  \begin{subfigure}[t]{0.48\textwidth}
    \centering
    \includegraphics[width=\linewidth]{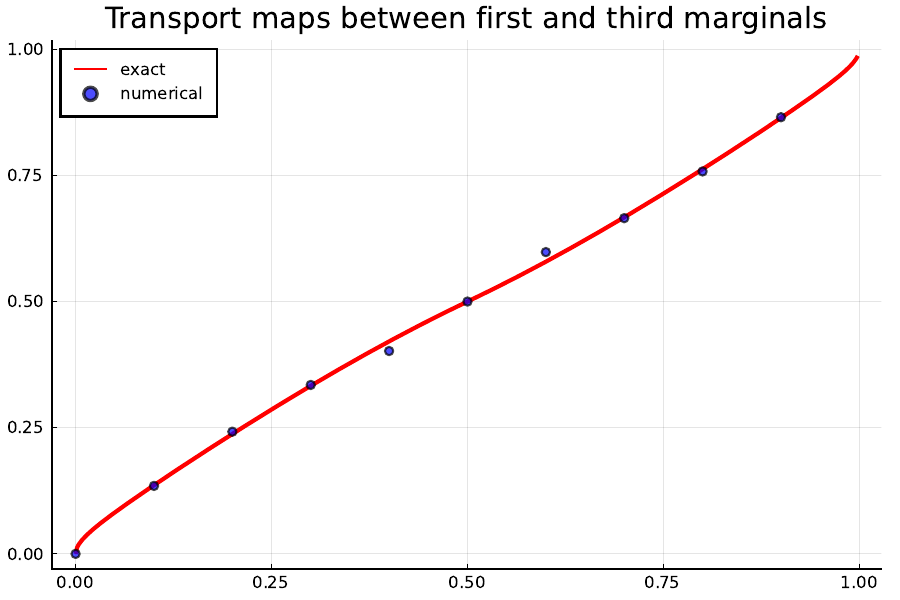}
    \caption{Analytic and numerical transport maps between $\marg_1$ and $\marg_3$.}
    \label{fig:transport13}
  \end{subfigure}

  \caption{The optimal coupling of the  static multi-marginal optimal transport with quadratic cost for the marginals $\marg_1,\marg_2,\marg_3$  of Figure \ref{fig:3marginals} can be computed analytically in terms of transport maps between $\marg_1$ to $\marg_2$ (res. $\marg_3$), which are expressed in terms of cumulative distribution functions; see Equation \eqref{eq:analytic_transport_GS}. In the figure we compare these exact solutions to the solutions obtained from our algorithm; see Equation \eqref{eq:transport_approx}.}
  \label{fig:transport_maps}
\end{figure}

\subsection{Duality}
From the beginning of the Kantorovich theory of optimal transport, duality played a crucial role. For the static two-marginal  optimal transport problem, duality takes the form
\begin{equation}
\label{eq:duality_static_2marg_intro}
\begin{split}
&\min_{\textnormal{ joint couplings $\joint$ of } \marg_1,\marg_2}\int_{\domainot^2}\costt(x_2-x_1)\diff \joint( x_1,x_2)\\
&=\\
&\max_{\{\lm_1,\lm_2:\domainot\to \R\,|\,\lm_1(x_1)+\lm_2(x_2)\le \costt(x_2-x_1)\}}\left\{\int_{\domainot}\lm_1(x_1)\diff\marg_1( x_1)+\int_{\domainot}\lm_2(x_2)\diff \marg_2(x_2)\right\},
\end{split}
\end{equation}
while for the Benamou-Brenier dynamical  formulation it takes the form,
\begin{equation}
\label{eq:duality_dynamic_2marg_intro}
\begin{split}
&\min_{\{(\prob,\velb)\,|\, \partial_t\prob+\nabla(\prob\velb)=0,~\prob(0,\cdot)=\marg_1,~\prob(1,\cdot)=\marg_2\}}\int_0^1\int_{\domainot} \costt(\velb(t,x))\diff\prob(t, x)\diff t\\
&=\\
&\max_{\{\lm:[0,1]\times\domainot\to \R\,|\,\partial_t\lm+\costt^*(\nabla\lm)\le 0\}}\left\{\int_{\domainot}\lm(1,x)\diff\marg_1( x)-\int_{\domainot}\lm(0,x)\diff\marg_2( x)\right\},
\end{split}
\end{equation}
where $\costt^*$ is the Legendre-Fenchel transform of $\costt$. Turning to the multi-marginal setting, for the static problem we have the duality
\begin{equation}
\label{eq:duality_static_multimarg_intro}
\begin{split}
&\min_{\textnormal{ joint couplings $\joint$ of } \marg_1,\ldots,\marg_{\kk}}\int_{\domainot^{\kk}}\cost(x_1,\ldots,x_{\kk})\diff  \joint(x_1,\ldots, x_{\kk})\\
&=\\
&\max_{\{\lm_1,\ldots,\lm_{\kk}:\domainot\to \R\, |\, \sum_{l=1}^{\kk}\lm_l(x_l)\le \cost(x_1,\ldots,x_{\kk})\}}\sum_{l=1}^{\kk}\int_{\domainot}\lm_l(x_l)\diff \marg_l(x_l).
\end{split}
\end{equation}
The next result gives the analogue of \eqref{eq:duality_dynamic_2marg_intro} in the multi-marginal setting\footnote{For technical reasons we make additional regularity and integrability assumptions  on the cost functions in our duality result .}. We note that the assumptions in following theorem are satisfied by all the different cases in Corollary \ref{cor:static_dynamic_equiv_intro}. 

\begin{theorem}[Informal; Theorem \ref{thm:dual_dynamical}]
\label{thm:duality_intro}
Let $\domainot\subseteq\R^{\dd}$ be a compact convex set, and let $\base$ be  a probability measure on $\domainot^{\kk}$. Let $\cost:\R^{\kk\dd}\to \R$ be a continuously differentiable cost function satisfying $\lim_{\beta\uparrow+\infty}\frac{1}{\beta}\int_{\R^{\kk\dd}}\cost(\beta x)\diff \base(x)=0$. If $\cost$ is convex and $\base$-translation-invariant, 
then, the dual of the dynamical multi-marginal optimal transport problem is 
\begin{equation}
\begin{split}
\label{eq:dual_thm_intro}
&\sup_{\{\lm:[0,1]\times\domainot^{\kk}\to \R,~\lm_1,\ldots,\lm_{\kk}:\domainot\to \R\}}\left\{\sum_{l=1}^{\kk}\int_{\domainot}\lm_l(x_l)\diff \marg_l(x_l) - \int_{\domainot^{\kk}}\lm(0,x)\diff\base( x)\right\},
\end{split}
\end{equation}
where
\begin{equation}
\label{eq:dual_constraintHJ_intro}
\partial_t\lm(t,x)+\cost^*(\nabla\lm(t,x))\le 0\qquad\textnormal{for all}\qquad t\in [0,1]\quad\textnormal{and}\quad x=(x_1,\ldots,x_{\kk})\in \domainot^{\kk},
\end{equation}
and 
\begin{equation}
\label{eq:dual_constraint_domination_intro}
 \sum_{l=1}^{\kk}\lm_l(x_l)\le \lm(1,x) \qquad\textnormal{for}\quad \textnormal{$(\otimes_{l=1}^{\kk}\marg_l)$-almost all}\qquad (x_1,\ldots,x_{\kk})\in \domainot^{\kk}.
\end{equation}
\end{theorem}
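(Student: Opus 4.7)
The plan is to derive the duality via a Fenchel--Rockafellar argument applied to the convex formulation \eqref{coupling_flow_convex_intro}. I would view the primal as the minimization of the convex integral functional
\begin{equation*}
F(\flow,\m) \;=\; \int_0^1\!\!\int_{\domainot^{\kk}}\cost\!\left(\tfrac{\diff\m}{\diff\flow}\right)\diff\flow\,\diff t
\end{equation*}
over $(\flow,\m)$ subject to three linear constraints: the continuity equation $\partial_t\flow+\nabla\m=0$ in the sense of distributions, the initial condition $\flow(0,\cdot)=\base$, and the matching of the $\kk$ spatial marginals of $\flow(1,\cdot)$ with $\marg_1,\ldots,\marg_{\kk}$. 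I attach dual variables $\lm:[0,1]\times\domainot^{\kk}\to\R$ for the continuity equation and $\lm_1,\ldots,\lm_{\kk}:\domainot\to\R$ for the terminal marginal constraints, while enforcing $\flow(0,\cdot)=\base$ directly as a boundary condition.

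Next, I would integrate by parts in $t$ and $x$ to move derivatives off $(\flow,\m)$, rewriting the Lagrangian as
\begin{align*}
\mathcal L \;=\;& \int_0^1\!\!\int_{\domainot^{\kk}}\!\!\bigl[\cost(\tfrac{\diff\m}{\diff\flow}) - \partial_t\lm - \nabla\lm\cdot\tfrac{\diff\m}{\diff\flow}\bigr]\diff\flow\,\diff t \\
&+ \int_{\domainot^{\kk}}\!\!\bigl[\lm(1,x)-\textstyle\sum_{l=1}^{\kk}\lm_l(x_l)\bigr]\diff\flow(1,x) - \int_{\domainot^{\kk}}\!\!\lm(0,x)\diff\base(x) + \sum_{l=1}^{\kk}\int_{\domainot}\!\lm_l\,\diff\marg_l .
\end{align*}
Pointwise minimization in the primal variables then produces the two dual constraints. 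Legendre--Fenchel duality applied to $\diff\m/\diff\flow$ yields $-\cost^*(\nabla\lm)-\partial_t\lm$ as the infimum of the $\diff\flow\diff t$-integrand, so finiteness of the infimum as $\flow\ge 0$ varies forces the Hamilton--Jacobi inequality $\partial_t\lm+\cost^*(\nabla\lm)\le 0$, which is \eqref{eq:dual_constraintHJ_intro}. Non-negativity of the free terminal mass $\flow(1,\cdot)$ similarly forces $\lm(1,x)\ge\sum_{l=1}^{\kk}\lm_l(x_l)$; because this constraint is effective only against couplings of $\marg_1,\ldots,\marg_{\kk}$, it is equivalent to the $(\otimes_{l=1}^{\kk}\marg_l)$-a.e.\ inequality \eqref{eq:dual_constraint_domination_intro}. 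The surviving affine terms $\sum_l\int\lm_l\diff\marg_l-\int\lm(0,\cdot)\diff\base$ constitute the dual objective \eqref{eq:dual_thm_intro}.

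The principal obstacle will be closing the duality gap (strong duality) and justifying the integration by parts when $(\flow,\m)$ are merely measures. I expect the growth hypothesis $\lim_{\beta\uparrow\infty}\beta^{-1}\!\int_{\R^{\kk\dd}}\cost(\beta x)\diff\base(x)=0$ to enter twice: first, it certifies that the primal value is finite by producing an admissible straight-line interpolation from $\base$ to an optimal static coupling (via Theorem~\ref{thm:static_dynamic_equiv_intro}); second, it supplies the recession/coercivity estimate needed to invoke a Fenchel--Rockafellar theorem (or a Sion-type minimax) in the appropriate weak$^*$ topology, ensuring continuity of the primal value at an interior point of the constraint set. Integration by parts is legitimized by restricting the supremum to $C^1$ subsolutions of \eqref{eq:dual_constraintHJ_intro} and using a standard mollification argument, which is safe thanks to the continuous differentiability of $\cost$ and the convexity of $\cost^*$. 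A final point to verify is that near-optimal dual variables can be explicitly constructed from the static Kantorovich potentials of \eqref{eq:duality_static_multimarg_intro} via a Hopf--Lax-type extension to $[0,1]\times\domainot^{\kk}$ that saturates the Hamilton--Jacobi inequality, thereby closing the two inequalities $\sup_{\textnormal{dual}}\le\inf_{\textnormal{primal}}$ and $\inf_{\textnormal{primal}}\le\sup_{\textnormal{dual}}$ simultaneously.
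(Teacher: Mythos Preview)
Your formal Lagrangian derivation and the weak-duality direction ($\sup_{\textnormal{dual}}\le\inf_{\textnormal{primal}}$) match the paper exactly: fix feasible $(\lm,\{\lm_l\})$ and $(\flow,\m)$, apply the weak continuity equation with test function $\lm$, then use the Hamilton--Jacobi inequality and Fenchel--Young to bound the dual objective by $\actd(\flow,\m)$.

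For strong duality, however, the paper does \emph{not} invoke an abstract Fenchel--Rockafellar theorem with interior-point or recession conditions; your reading of the growth hypothesis as a coercivity input to such a theorem is off the mark. The paper instead constructs an explicit near-optimal dual sequence. Your Hopf--Lax idea is close in spirit, but the paper itself observes (Remark~\ref{rem:sup_attained}) that the Hopf--Lax extension of the static Kantorovich potentials fails to be $C^1$ at $t=0$ and hence does not lie in $\ID(\marg_1,\ldots,\marg_{\kk})$---so as written your proposal has a genuine regularity gap. The paper's fix is to drop the potentials from the space-time variable entirely and set
\[
\lm^{\epsilon}(t,x):=(t+\epsilon)\,\cost\!\left(\frac{x}{t+\epsilon}\right),
\]
which is $C^1$ on $[0,1]\times\domainot^{\kk}$ (since $\cost$ is $C^1$) and satisfies the Hamilton--Jacobi \emph{equality} by direct computation. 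The functions $\{\lm_l^{\epsilon}\}$ are then taken as optimizers of the \emph{static} dual for the perturbed cost $(1+\epsilon)\cost(\cdot/(1+\epsilon))=\lm^{\epsilon}(1,\cdot)$, so the domination constraint holds automatically. The growth hypothesis enters at exactly one point: it forces $\int_{\domainot^{\kk}}\lm^{\epsilon}(0,x)\diff\base(x)=\int_{\domainot^{\kk}}\epsilon\,\cost(x/\epsilon)\diff\base(x)\to 0$, so the dual objective collapses to the perturbed static dual value; a separate stability argument shows the latter converges to the static value for $\cost$, which equals the dynamical primal by Theorem~\ref{thm:static_dynamic_equiv}.
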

The fact that our dynamical formulation is on the level of flows of couplings, as opposed to flows of marginals, manifests itself also in the form of the dual problem in Theorem \ref{thm:duality_intro}.  In particular, while both \eqref{eq:duality_dynamic_2marg_intro} and  \eqref{eq:dual_constraintHJ_intro} impose the Hamilton-Jacobi equation constraint $\partial_t\lm+\cost^*(\nabla\lm)\le 0$, the term $\lm(1,x)$ in \eqref{eq:duality_dynamic_2marg_intro} is replaced in \eqref{eq:dual_thm_intro}-\eqref{eq:dual_constraint_domination_intro} by $\sum_{l=1}^{\kk}\lm_l(x_l)$, since we only have marginal constraints on $\flow$ at $t=1$. We note that Theorem \ref{thm:duality_intro} does not address the question of attainment of the supremum, which is discussed in Remark   \ref{rem:sup_attained}.

\subsection*{Organization of paper} In Section \ref{sec:DMMOT} we prove our main result on the equivalence of the dynamical and static multi-marginal optimal transport problems (Theorem \ref{thm:static_dynamic_equiv_intro}/Theorem \ref{thm:static_dynamic_equiv}, Corollary \ref{cor:static_dynamic_equiv_intro}/Corollary \ref{cor:static_dynamic_equiv}, and Remark \ref{rem:semicvx_intro}/Corollary \ref{cor:static_dynamic_equiv_semicvx}). In Section \ref{sec:dual} we derive the dual of the dynamical multi-marginal optimal transport problem (Theorem \ref{thm:duality_intro}/Theorem \ref{thm:dual_dynamical}). In Section \ref{sec:proximal} we show how to numerically solve the primal dynamical multi-marginal optimal transport problem using proximal-splitting methods.

\subsection*{Acknowledgments} We are very grateful to Hugo Lavenant for helpful comments on earlier versions of this manuscript, as well as help  with the numerical aspects of proximal splitting methods. We are also very grateful to Gabriel Peyr\'e for his helpful remarks on  proximal splitting methods. We would also like to thank Sinho Chewi, Jonathan Niles-Weed, and the participants of the program Physics of AI Algorithms 2025 at  the Les Houches School of Physics for insightful conversations on this work. 

This material is based upon work supported by the National Science Foundation under Award Numbers DMS-2331920 and DMS- 2508545. Pass is pleased to acknowledge the support of Natural Sciences and
Engineering Research Council of Canada Discovery Grant number 04864-2024.

\section{A dynamical formulation of multi-marginal optimal transport}
\label{sec:DMMOT}

\subsection{Main result}
\label{subsec:main_result}

In this section we state our main result, Theorem \ref{thm:static_dynamic_equiv}, on the equivalence between the static and dynamical multi-marginal optimal transport problems. To formulate the theorem we begin by defining the two problems. Our cost function is  a measurable function $\cost:\R^{\kk\dd}\to \R$, where throughout  this work we make the identification  $(\R^{\dd})^{\kk} \cong\R^{\kk\dd}$, and write $x=(x_1,\ldots,x_{\kk})\in \R^{\kk\dd}$ where $x_l\in\R^{\dd}$ for $l\in [\kk]$ with $[\kk]:=\{1,\ldots,\kk\}$. 

Let us begin with the classical static multi-marginal optimal transport problem.

\begin{definition}[Static multi-marginal optimal transport]
\label{def:static_main}
Let $\marg_1,\ldots,\marg_{\kk}$ be probability measures on a Borel set  $\domainot\subseteq \R^{\dd}$, and let $\Pi(\marg_1,\ldots,\marg_{\kk})$ be the set of probability measures on $\domainot^{\kk}$ whose respective marginals are $\marg_1,\ldots,\marg_{\kk}$.  Denote the static cost of a probability measure $\joint$ on $\domainot^{\kk}$ by
\begin{equation}
\label{eq:acts_def}
\acts(\joint):=\int_{\domainot^{\kk}}\cost(x_1,\ldots,x_{\kk})\diff  \joint(x_1,\ldots,  x_{\kk}).
\end{equation}
The \emph{static multi-marginal  optimal transport problem} is 
\begin{equation}
\label{eq:static_mmot_def}
\inf_{\joint \in \Pi(\marg_1,\ldots,\marg_{\kk})}\acts(\joint). 
\end{equation}
\end{definition}
Next we define our dynamical optimal transport problem for the multi-marginal case.

\begin{definition}[Dynamical multi-marginal  optimal transport]
\label{def:dynamic_main}
Let $\domainot\subseteq \R^{\dd}$ be a Borel set. Let $\mathcal P(\domainot^k)$ be the set of probability measures on $\domainot^k$ endowed with the weak convergence topology, $C\left([0,1]; \mathcal P(\domainot^{\kk})\right)$ the set of continuous maps from $[0,1]$ to $\mathcal P(\domainot^{\kk})$, and let  $\mathcal M([0,1]\times\domainot^{\kk}; \R^{\kk\dd})$ be the set of $\R^{\kk\dd}$-vector-valued measures on $[0,1]\times\domainot^{\kk}$ endowed with the total variation norm. Let $\marg_1,\ldots,\marg_{\kk}$ be probability measures on $\domainot$, and let $\base$ be a  probability measure on $\domainot^{\kk}$. Define the set of flows 
\begin{equation}
\label{eq:cont_set}
\CC(\base,\marg_1,\ldots,\marg_{\kk}):=
\begin{cases}
(\flow ,\m)\in C\left([0,1]; \mathcal P(\domainot^{\kk})\right)\times \mathcal M([0,1]\times\domainot^{\kk};\R^{\kk\dd}); \m\ll\flow,\\
\int_0^1\int_{\domainot^{\kk}}\left|\frac{\diff\m(t, x)}{\diff\flow(t, x)}\right|^p\diff\flow(t, x)\diff t<+\infty\quad\textnormal{for some $p>1$,}\\
\partial_t\flow+\nabla \m=0\textnormal{ in a weak sense},\\
\flow(0, \cdot)=\base\,\textnormal{ and } \,\flow_l(1,\cdot)=\marg_l\textnormal{ for all }l\in [\kk],
\end{cases}
\end{equation}
where $\m\ll\flow$ means that $\m(t,\cdot)$ is absolutely continuous with respect to $\flow(t,\cdot)$ for all $t\in [0,1]$, $\nabla \m$ stands for the divergence of $\m$, $\partial_t\flow+\nabla \m=0$ in a weak sense means that for every continuously differentiable function $\test\in C^1([0,1]\times\domainot^{\kk}; \R)$,
\begin{equation}
\label{eq:weak_cteq}
\int_0^1\int_{\domainot^{\kk}}\partial_t\test(t,x)\diff\flow(t, x)\diff t+\int_0^1\int_{\domainot^{\kk}}\langle\nabla\test(t,x),\diff\m(t, x)\rangle \diff t=\int_{\domainot^{\kk}}\eta(1,x)\diff\flow(1, x)-\int_{\domainot^{\kk}}\eta(0,x)\diff\flow(0, x),
\end{equation}
(note that \eqref{eq:weak_cteq} implicitly imposes Neumann boundary conditions on $\m$), and $\pi_l(t,\cdot)$ stands for the $l$th marginal of $\pi(t,\cdot)$ for $l\in [\kk]$ and $t\in [0,1]$. 

Denote the dynamical cost  of  a flow $(\flow,\m)\in \CC(\base,\marg_1,\ldots,\marg_{\kk})$ by
\begin{equation}
\label{eq:actd_def}
\actd(\flow,\m):=\int_0^1\int_{\domainot^{\kk}}\cost\left(\frac{\diff \m(t,x)}{\diff\flow(t, x)}\right)\diff\flow(t, x)\diff t.
\end{equation}
The \emph{dynamical multi-marginal optimal transport problem} is 
\begin{equation}
\label{eq:dynamical_mmot_def}
\inf_{(\flow,\m)\in \CC(\base,\marg_1,\ldots,\marg_{\kk})}\actd(\flow,\m).
\end{equation}
\end{definition}
Our main result is that under convexity assumptions the static and dynamical multi-marginal optimal transport problems of Definition \ref{def:static_main} and Definition \ref{def:dynamic_main} are equivalent. 
\begin{theorem}
\label{thm:static_dynamic_equiv}
Let $\domainot\subseteq\R^{\dd}$ be a compact convex set, and let $\base$ be  a probability measure on $\domainot^{\kk}$. Let $\cost:\R^{\kk\dd}\to \R$ be a convex function which is  $\base$-translation-invariant, 
\begin{equation}
\label{eq:trans_inv_cost_thm}
\cost(x-\tvec)=\cost(x)\quad\textnormal{for all} \quad x\in \domainot^{\kk}\quad\textnormal{and $\base$-almost-all}\quad \tvec\in \domainot^{\kk}.
\end{equation}
Then,
\begin{equation}
\label{eq:static_dynamic_equiv}
\min_{\joint\in \Pi(\marg_1,\ldots,\marg_{\kk})}\acts(\joint)=\min_{(\flow,\m)\in \CC(\base,\marg_1,\ldots,\marg_{\kk})}\actd(\flow,\m);
\end{equation}
in particular the minima on both sides of \eqref{eq:static_dynamic_equiv} are attained. Further, 
\begin{enumerate}
\item \label{enumopt_static_dynmaic}
Any minimizer $\jointop\in \Pi(\marg_1,\ldots,\marg_{\kk})$ of the left-hand side of \eqref{eq:static_dynamic_equiv} induces a flow $(\flowop,\mop)\in \CC(\base,\marg_1,\ldots,\marg_{\kk})$ which is a minimizer of the right-hand side of \eqref{eq:static_dynamic_equiv}, satisfying $\flowop(1,\cdot)=\jointop$.
\item \label{enumopt_dynmaic_static} Any minimizer  $(\flowop,\mop)\in \CC(\base,\marg_1,\ldots,\marg_{\kk})$ of the right-hand side of \eqref{eq:static_dynamic_equiv} has the property that $\flowop(1,\cdot)$ is a minimizer of the left-hand side of \eqref{eq:static_dynamic_equiv}.
\end{enumerate}
\end{theorem}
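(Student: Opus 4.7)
The plan is to establish the equality \eqref{eq:static_dynamic_equiv} by proving the two inequalities separately, from which attainment and assertions \eqref{enumopt_static_dynmaic}--\eqref{enumopt_dynmaic_static} will follow essentially for free. Both directions are driven by the same Lagrangian picture: a flow $(\flow, \m) \in \CC(\base, \marg_1, \ldots, \marg_{\kk})$ should be viewed as a statistical ensemble of particle trajectories in $\domainot^{\kk}$ whose starting points are distributed like $\base$ and whose endpoints are distributed like some coupling $\joint \in \Pi(\marg_1, \ldots, \marg_{\kk})$, and $\actd(\flow, \m)$ is the expected action of these trajectories.

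For the inequality $\min_{\joint} \acts(\joint) \ge \inf_{(\flow, \m)} \actd(\flow, \m)$, I would start from an arbitrary $\joint \in \Pi(\marg_1, \ldots, \marg_{\kk})$ and use the \emph{displacement interpolation from $\base$ to $\joint$ along the independent coupling}: set $\diffeo_t(z, x) := (1 - t) z + t x$, $\flow(t, \cdot) := (\diffeo_t)_{\sharp} (\base \otimes \joint)$, and let $\m$ be the momentum associated with the Lagrangian velocity field $(z, x) \mapsto x - z$. By construction $\flow(0, \cdot) = \base$ and $\flow(1, \cdot) = \joint$, so the marginals of $\flow(1, \cdot)$ are $\marg_1, \ldots, \marg_{\kk}$; compactness of $\domainot$ supplies the integrability requirement in \eqref{eq:cont_set} for any $p > 1$. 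The Eulerian velocity $\diff \m(t, y) / \diff \flow(t, y)$ is the conditional expectation of $X - Z$ given $(1 - t) Z + t X = y$, so Jensen's inequality and convexity of $\cost$ give
\begin{equation*}
\actd(\flow, \m) \le \int_0^1 \int_{\domainot^{\kk} \times \domainot^{\kk}} \cost(x - z) \, \diff (\base \otimes \joint)(z, x) \, \diff t,
\end{equation*}
and the $\base$-translation-invariance \eqref{eq:trans_inv_cost_thm} collapses the $z$-integral to yield $\actd(\flow, \m) \le \acts(\joint)$.

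For the reverse inequality I would apply the superposition principle of Ambrosio--Gigli--Savar\'{e} (or Lisini): the integrability built into \eqref{eq:cont_set} is exactly what one needs to lift $(\flow, \m)$ to a probability measure $\sigma$ on $C([0, 1]; \domainot^{\kk})$ concentrated on absolutely continuous curves $\omega$ with $\dot \omega(t) = (\diff \m / \diff \flow)(t, \omega(t))$ for almost every $t$, and $(e_t)_{\sharp} \sigma = \flow(t, \cdot)$ for all $t$, where $e_t(\omega) := \omega(t)$. Convexity of $\cost$ and Jensen give, curve by curve, $\cost(\omega(1) - \omega(0)) \le \int_0^1 \cost(\dot \omega(t)) \, \diff t$; integrating against $\sigma$ produces $\int \cost(\omega(1) - \omega(0)) \, \diff \sigma(\omega) \le \actd(\flow, \m)$. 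Since $(e_0)_{\sharp} \sigma = \base$, the $\base$-translation-invariance \eqref{eq:trans_inv_cost_thm} gives $\cost(\omega(1) - \omega(0)) = \cost(\omega(1))$ for $\sigma$-a.e.\ $\omega$, so the left-hand side equals $\acts(\flow(1, \cdot))$, which yields the desired bound because $\flow(1, \cdot) \in \Pi(\marg_1, \ldots, \marg_{\kk})$ by the marginal constraints.

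Attainment is then standard: on the static side from weak compactness of $\Pi(\marg_1, \ldots, \marg_{\kk})$ together with lower semi-continuity of $\acts$, and on the dynamical side from a Benamou--Brenier-style direct method, using joint convexity and lower semi-continuity under weak convergence of $(\flow, \m) \mapsto \int_0^1 \int_{\domainot^{\kk}} \cost(\diff \m / \diff \flow) \, \diff \flow \, \diff t$ (the classical $1$-homogeneous integrand trick after passing to the $(\flow, \m)$ variables). Assertions \eqref{enumopt_static_dynmaic}--\eqref{enumopt_dynmaic_static} are then immediate: the displacement interpolation applied to any static minimizer $\jointop$ produces a dynamical minimizer $(\flowop, \mop)$ with $\flowop(1, \cdot) = \jointop$, while for any dynamical minimizer $(\flowop, \mop)$ the superposition bound forces $\flowop(1, \cdot)$ to attain the static minimum. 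The main obstacle I anticipate is making the superposition principle rigorously applicable on the compact domain $\domainot^{\kk}$ under the Neumann-type boundary conditions implicit in \eqref{eq:weak_cteq}, and, closely related, justifying the conditional-expectation step in the forward direction when $\diff \m / \diff \flow$ has low regularity (e.g.\ when $\flow$ is supported on a low-dimensional set); both issues should be tractable via regular conditional probabilities on the Polish space $\domainot^{\kk}$ and a standard reflection/extension argument for the boundary.
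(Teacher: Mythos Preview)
Your proposal is correct and follows the same two-inequality strategy as the paper: displacement interpolation between $\base$ and a coupling $\joint$ for ``static $\ge$ dynamic'' (this is exactly Proposition~\ref{prop:velocity_exist}, with the paper allowing any $q_{ab}\in\Pi(\base,\joint)$ where you take the product coupling), and the superposition principle plus Jensen along curves for ``dynamic $\ge$ static''. The one substantive technical difference is that you apply superposition \emph{directly} to $(\flow,\m)$, whereas the paper first mollifies to obtain a continuous velocity $\vel^{\reg}$ (Lemma~\ref{lem:cost_reg}), applies Proposition~\ref{prop:thm8.2.1AGS} to the smoothed flow so that the test function $\cost\circ\vel^{\reg}$ is continuous and bounded, and only then passes to the limit along shrinking kernels. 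Your direct route is legitimate---the pushforward identity $(e_t)_{\sharp}\sigma=\flow(t,\cdot)$ holds at the level of measures, so one needs only $\cost(\vel)\in L^1(\flow\otimes\diff t)$, not continuity---and it also makes item~\eqref{enumopt_dynmaic_static} immediate, whereas the paper re-runs the mollification-plus-limit argument for that step. Two small remarks: for the boundary issue you flag, no reflection is needed---the paper simply extends $(\flow,\m)$ by zero to $\R^{\kk\dd}$, applies \cite[Theorem~8.2.1]{ambrosio2008gradient}, and observes that since each $\flow(t,\cdot)$ is supported in $\domainot^{\kk}$ the curves stay there (using continuity and rational times); and for attainment on the dynamical side there is no need for a direct-method argument, since the displacement interpolation from a static minimizer already saturates both inequalities and hence is itself a dynamical minimizer.
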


The following result is an immediate corollary of Theorem \ref{thm:static_dynamic_equiv}.
\begin{corollary}
\label{cor:static_dynamic_equiv}
Let $\domainot\subseteq\R$ be a compact convex set, and let $\marg_1,\ldots,\marg_{\kk}$ be probability measures on $\domainot$.  Suppose that one of the following holds.
\begin{enumerate}[(i)]
\item  \label{enum:delta}$\base=\delta_0$, where we assume  $0\in\domainot^{\kk}$, and $ \cost:\R^{\kk\dd}\to \R$ is  convex.
\item \label{enum:tran_inv}   $\cost:\R^{\kk\dd}\to \R$ is a convex function such that
\begin{equation}  
\label{eq:p_trans_inv_ex_full}
\cost(x_1-\xi,\ldots,x_{\kk}-\xi)=\cost(x_1,\ldots,x_{\kk})\quad \quad\textnormal{for all}\quad x_1,\ldots,x_k\in \domainot\quad\textnormal{and}\quad \xi\in \R^{\dd},
\end{equation}
and $\base$ is of the form
\begin{equation}
\label{eq:base_Z}
\base=\law(\xi,\ldots,\xi),\qquad \xi\sim \qm\quad\textnormal{for some probability measure $\qm$ on $\domainot$}.
\end{equation}
\item \label{enum:proj} Let $E\subseteq \R^{\kk\dd}$ be a subspace and let $\proj_E$ be the orthogonal projection onto $E$. Suppose that  $\cost:\R^{\kk\dd}\to \R$ is of the form
\[
\cost(x)=\tilde{\cost}(\proj_E(x))\qquad\textnormal{for a convex function $\tilde{\cost}:E\to \R$},
\]
and $\base$ is a probability measure  on $\domainot^{\kk}$ such that $\supp(\base)\subseteq E^{\perp}$.
\end{enumerate}
In each of the cases \eqref{enum:delta},\eqref{enum:tran_inv}, \eqref{enum:proj} we have
\begin{equation}
\label{eq:static_dynamic_equiv_cor}
\min_{\joint\in \Pi(\marg_1,\ldots,\marg_{\kk})}\acts(\joint)=\min_{(\flow,\m)\in \CC(\base,\marg_1,\ldots,\marg_{\kk})}\actd(\flow,\m),
\end{equation}
and the minimum on both sides of \eqref{eq:static_dynamic_equiv_cor} is attained. Further, the conclusion of Theorem \ref{thm:static_dynamic_equiv}\eqref{enumopt_static_dynmaic}-\eqref{enumopt_dynmaic_static} hold for each of the cases \eqref{enum:delta},  \eqref{enum:tran_inv}, \eqref{enum:proj}. 
\end{corollary}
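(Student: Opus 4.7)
The plan is to deduce Corollary \ref{cor:static_dynamic_equiv} directly from Theorem \ref{thm:static_dynamic_equiv} by verifying, in each of the three cases, that the pair $(\cost,\base)$ satisfies the hypotheses of the theorem, namely that $\cost$ is convex and $\base$-translation-invariant in the sense of \eqref{eq:trans_inv_cost_thm}. Since the conclusion of Theorem \ref{thm:static_dynamic_equiv} then gives both the equivalence \eqref{eq:static_dynamic_equiv_cor} and the statements in \eqref{enumopt_static_dynmaic}-\eqref{enumopt_dynmaic_static}, there is essentially no new analytical content; the whole proof reduces to a bookkeeping check of the translation-invariance condition in each case.

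For case \eqref{enum:delta}, $\base=\delta_0$ is concentrated at the origin, so the condition \eqref{eq:trans_inv_cost_thm} requires only $\cost(x-0)=\cost(x)$, which is automatic for any $\cost$. Convexity of $\cost$ is assumed, so Theorem \ref{thm:static_dynamic_equiv} applies. For case \eqref{enum:tran_inv}, $\base=\law(\xi,\ldots,\xi)$ is supported on the diagonal $\{(\xi,\ldots,\xi):\xi\in\domainot\}\subseteq\domainot^{\kk}$, so $\base$-almost all $\tvec\in\domainot^{\kk}$ are of the form $\tvec=(\xi,\ldots,\xi)$; the assumption \eqref{eq:p_trans_inv_ex_full} then gives exactly $\cost(x-\tvec)=\cost(x_1-\xi,\ldots,x_{\kk}-\xi)=\cost(x)$, so $\base$-translation-invariance holds. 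Convexity is again directly assumed. For case \eqref{enum:proj}, convexity of $\cost=\tilde{\cost}\circ\proj_E$ follows from the linearity of the orthogonal projection $\proj_E$ composed with the convex $\tilde{\cost}$. For $\base$-almost all $\tvec$ we have $\tvec\in E^{\perp}$, hence $\proj_E(\tvec)=0$ and, by linearity of $\proj_E$, $\proj_E(x-\tvec)=\proj_E(x)-\proj_E(\tvec)=\proj_E(x)$, which gives $\cost(x-\tvec)=\tilde{\cost}(\proj_E(x-\tvec))=\tilde{\cost}(\proj_E(x))=\cost(x)$, as required.

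Having verified the hypotheses, Theorem \ref{thm:static_dynamic_equiv} yields \eqref{eq:static_dynamic_equiv_cor}, the attainment of both minima, and the correspondence between optimizers described in \eqref{enumopt_static_dynmaic}-\eqml{enumopt_dynmaic_static}. The only subtlety worth noting is case \eqref{enum:proj}: one should check that $\supp(\base)\subseteq E^{\perp}\cap\domainot^{\kk}$ is consistent with $\base$ being a probability measure on $\domainot^{\kk}$, which is part of the standing assumption. I do not anticipate a main obstacle here, since the work has all been done upstream in Theorem \ref{thm:static_dynamic_equiv}; the corollary is really a catalogue of natural situations in which the abstract $\base$-translation-invariance condition is easy to exhibit.
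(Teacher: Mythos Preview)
Your proposal is correct and matches the paper's approach: the paper provides no explicit proof, simply stating that the result is ``an immediate corollary of Theorem \ref{thm:static_dynamic_equiv},'' which is precisely what your case-by-case verification of $\base$-translation-invariance accomplishes. One trivial note: you have a typo ``\textbackslash eqml'' where you mean ``\textbackslash eqref'' in the final paragraph.
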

From Corollary \ref{cor:static_dynamic_equiv} we can deduce the semi-convex case when $\base=\delta_0$. 

\begin{corollary}
\label{cor:static_dynamic_equiv_semicvx}
Let $\domainot\subseteq\R$ be a compact convex set, and let $\marg_1,\ldots,\marg_{\kk}$ be probability measures on $\domainot$.  Suppose that  $0\in\domainot^{\kk}$, and that $ \cost:\R^{\kk\dd}\to \R$ is $\scvx$-semi-convex,  
\begin{equation}
\label{eq:def_semicvx}
\textnormal{there exists a constat $\scvx\ge 0$ such that $\R^{\kk\dd}\ni x\mapsto \cost(x)+\scvx\frac{|x|^2}{2}$ is convex}.
\end{equation}
Then, 
\begin{equation}
\label{eq:static_dynamic_equiv_cor_semicvx}
\begin{split}
&\min_{\joint\in \Pi(\marg_1,\ldots,\marg_{\kk})}\acts(\joint)=\\
&\min_{(\flow,\m)\in \CC(\delta_0,\marg_1,\ldots,\marg_{\kk})}\int_0^1\int_{\domainot^{\kk}}\left[\cost\left(\frac{\diff \m(t,x)}{\diff\flow(t, x)}\right)+\frac{\scvx}{2}\left|\frac{\diff \m(t,x)}{\diff\flow(t, x)}\right|^2\right]\diff\flow(t, x)\diff t-\frac{\scvx}{2}\sum_{l=1}^{\kk}\int_{\domainot} |x_l|^2\diff \marg_l(x)
\end{split},
\end{equation}
and the minimum on both sides of \eqref{eq:static_dynamic_equiv_cor_semicvx} is attained. Further,
\begin{enumerate}
\item \label{enumopt_static_dynmaic_semicvx}
Any minimizer $\jointop\in \Pi(\marg_1,\ldots,\marg_{\kk})$ of the left-hand side of \eqref{eq:static_dynamic_equiv_cor_semicvx} induces a flow $(\flowop,\mop)\in \CC(\base,\marg_1,\ldots,\marg_{\kk})$ which is a minimizer of the right-hand side of  \eqref{eq:static_dynamic_equiv_cor_semicvx}, satisfying $\flowop(1,\cdot)=\jointop$.
\item \label{enumopt_dynmaic_static_semicvx} Any minimizer  $(\flowop,\mop)\in \CC(\base,\marg_1,\ldots,\marg_{\kk})$ of the right-hand side of \eqref{eq:static_dynamic_equiv_cor_semicvx} has the property that $\flowop(1,\cdot)$ is a minimizer of the left-hand side of  \eqref{eq:static_dynamic_equiv_cor_semicvx}.
\end{enumerate}
\end{corollary}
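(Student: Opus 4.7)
The plan is to reduce the semi-convex case to the convex case covered by Corollary \ref{cor:static_dynamic_equiv}\eqref{enum:delta} by adding the auxiliary quadratic term $\scvx|x|^2/2$ to convexify the cost. Concretely, define $\tilde{\cost}(x) := \cost(x) + \scvx\frac{|x|^2}{2}$, which is convex by the semi-convexity assumption \eqref{eq:def_semicvx}. Since $\delta_0$-translation-invariance is automatic for every cost (Example \ref{ex:p_delta}), Corollary \ref{cor:static_dynamic_equiv}\eqref{enum:delta} applies to $\tilde{\cost}$ with source $\base=\delta_0$, and yields
\[
\min_{\joint\in \Pi(\marg_1,\ldots,\marg_{\kk})}\int_{\domainot^{\kk}}\tilde{\cost}\,\diff\joint\;=\;\min_{(\flow,\m)\in \CC(\delta_0,\marg_1,\ldots,\marg_{\kk})}\int_0^1\!\!\int_{\domainot^{\kk}}\tilde{\cost}\!\left(\frac{\diff\m}{\diff\flow}\right)\diff\flow\diff t.
\]

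Next, I would rewrite each side by splitting off the quadratic term. On the static side, for any $\joint\in\Pi(\marg_1,\ldots,\marg_{\kk})$, writing $|x|^2=\sum_{l=1}^{\kk}|x_l|^2$ and using that the $l$th marginal of $\joint$ is $\marg_l$,
\[
\int_{\domainot^{\kk}}\tilde{\cost}\,\diff\joint = \int_{\domainot^{\kk}}\cost\,\diff\joint + \frac{\scvx}{2}\sum_{l=1}^{\kk}\int_{\domainot}|x_l|^2\diff\marg_l(x_l),
\]
so the additional term is a constant independent of $\joint$. On the dynamical side, linearity of the integral simply splits
\[
\int_0^1\!\!\int_{\domainot^{\kk}}\tilde{\cost}\!\left(\frac{\diff\m}{\diff\flow}\right)\diff\flow\diff t
=\int_0^1\!\!\int_{\domainot^{\kk}}\!\left[\cost\!\left(\frac{\diff\m}{\diff\flow}\right)+\frac{\scvx}{2}\left|\frac{\diff\m}{\diff\flow}\right|^{2}\right]\diff\flow\diff t.
\]
Substituting these two identities into the equality supplied by Corollary \ref{cor:static_dynamic_equiv}\eqref{enum:delta} and rearranging produces \eqref{eq:static_dynamic_equiv_cor_semicvx}; attainment of both minima follows directly from the attainment statement of that corollary applied to $\tilde{\cost}$.

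For the minimizer correspondences \eqref{enumopt_static_dynmaic_semicvx}--\eqref{enumopt_dynmaic_static_semicvx}, I would exploit that the static problems for $\cost$ and $\tilde{\cost}$ differ by the $\joint$-independent constant $\tfrac{\scvx}{2}\sum_l\int|x_l|^2\diff\marg_l$, and the dynamical problems for $\tilde{\cost}$ and for the functional in the right-hand side of \eqref{eq:static_dynamic_equiv_cor_semicvx} differ by the $(\flow,\m)$-independent constant $\tfrac{\scvx}{2}\sum_l\int|x_l|^2\diff\marg_l$. Hence $\argmin$ sets are identical on each level. Applying Corollary \ref{cor:static_dynamic_equiv}\eqref{enumopt_static_dynmaic}-\eqref{enumopt_dynmaic_static} to $\tilde{\cost}$ then transfers minimizers back and forth as claimed.

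There is essentially no obstacle here beyond bookkeeping; the only point requiring a touch of care is checking that the admissible class $\CC(\delta_0,\marg_1,\ldots,\marg_{\kk})$ does not depend on the cost, so that the identification of minimizers across the two dynamical problems (with cost $\tilde{\cost}$ versus $\cost+\tfrac{\scvx}{2}|\cdot|^2$ inside the integrand) is literal rather than formal. This is immediate from Definition \ref{def:dynamic_main}.
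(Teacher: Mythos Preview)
Your proposal is correct and follows essentially the same approach as the paper: define the convexified cost $\tilde{\cost}=\cost+\tfrac{\scvx}{2}|\cdot|^2$, apply Corollary \ref{cor:static_dynamic_equiv}\eqref{enum:delta}, split off the quadratic term on the static side as a marginal-dependent constant, and transfer the minimizer correspondences by noting that the added constant does not affect argmin sets. The only cosmetic difference is that the paper cites Theorem \ref{thm:static_dynamic_equiv}\eqref{enumopt_static_dynmaic}-\eqref{enumopt_dynmaic_static} directly rather than via Corollary \ref{cor:static_dynamic_equiv}.
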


\begin{proof}
Define the cost function $\cost_{\scvx}:\R^{\kk\dd}\to \R$ by $\cost_{\scvx}(x):=\cost(x)+\scvx\frac{|x|^2}{2}$. As $\cost_{\scvx}$ is convex,  Corollary \ref{cor:static_dynamic_equiv}\eqref{enum:delta} yields
\begin{equation}
\label{eq:equiv_dym_static_bar}
\min_{\joint\in \Pi(\marg_1,\ldots,\marg_{\kk})}\int_{\domainot^{\kk}}\cost_{\scvx}(x_1,\ldots,x_{\kk})\diff  \joint(x_1,\ldots,  x_{\kk})=\min_{(\flow,\m)\in \CC(\delta_0,\marg_1,\ldots,\marg_{\kk})}\int_0^1\int_{\domainot^{\kk}}\cost_{\scvx}\left(\frac{\diff \m(t,x)}{\diff\flow(t, x)}\right)\diff\flow(t, x)\diff t.
\end{equation}
Noting that 
\[
\int_{\domainot^{\kk}}\cost_{\scvx}(x_1,\ldots,x_{\kk})\diff  \joint(x_1,\ldots,  x_{\kk})=\int_{\domainot^{\kk}}\cost(x_1,\ldots,x_{\kk})\diff  \joint(x_1,\ldots,  x_{\kk})+\frac{\scvx}{2}\sum_{l=1}^{\kk}\int_{\domainot} |x_l|^2\diff \marg_l(x)
\]
establishes \eqref{eq:static_dynamic_equiv_cor_semicvx}. To establish items (1)-(2) note that the sets of minimizers of the left-hand sides of \eqref{eq:static_dynamic_equiv_cor_semicvx} and \eqref{eq:equiv_dym_static_bar} are identical, since  $\frac{\scvx}{2}\sum_{l=1}^{\kk}\int_{\domainot} |x_l|^2\diff \marg_l(x)$ is independent of the coupling $\joint$. The result then follows from Theorem \ref{thm:static_dynamic_equiv}\eqref{enumopt_static_dynmaic}-\eqref{enumopt_dynmaic_static}.
\end{proof}

The proof of Theorem \ref{thm:static_dynamic_equiv} is  technical and requires some preliminary work. We begin in Section \ref{subsec:proof_easy} by providing a heuristic argument for Theorem \ref{thm:static_dynamic_equiv} which, while not rigorous, provides some intuition. We then move to the preliminaries required to rigorously prove Theorem \ref{thm:static_dynamic_equiv}. In  Section \ref{subsec:cvx_cost} we show that the dynamical cost is convex and deduce some useful consequences.  We continue in Section \ref{subsec:construct_flow} with the main technical tools that relate flows to static couplings. With these preliminaries in hand we prove Theorem \ref{thm:static_dynamic_equiv} in Section \ref{subsec:proof_main}.

\subsection{Heuristic argument for Theorem \ref{thm:static_dynamic_equiv}} 
\label{subsec:proof_easy}
To prove Theorem \ref{thm:static_dynamic_equiv} it suffices to show that the right-hand side of \eqref{eq:static_dynamic_equiv} is both larger and smaller than its left-hand side.\\

\noindent \textbf{Dynamic $\ge$ Static.} Let $(\flow,\m=\vel \flow)\in  \CC(\base,\marg_1,\ldots,\marg_{\kk})$. Suppose that given $x\in \R^{\kk\dd}$ the equation 
\begin{equation}
\label{eq:ode_proof}
\partial_t\diffeo(t,x)=\vel(t,\diffeo(t,x)),~\forall~t\in [0,1],\quad\quad\diffeo(0,x)=x,
\end{equation}
has a unique solution satisfying
\begin{equation}
\label{eq:ode_pushroward}
\law(\diffeo(t,X))=\flow(t,\cdot)\quad\textnormal{with}\quad X\sim \base.
\end{equation}
(This assumption holds true when the vector field $\vel$ is regular enough.) Since $\cost$ is $\base$-translation-invariant, we have 
\begin{equation}
\label{eq:trans_inv+diag}
\cost\left(\diffeo(1,x)-x\right)=\cost\left(\diffeo(1,x)\right),\qquad \textnormal{$\base$-almost-surely}.
\end{equation}
Thus,
\begin{align*}
&\int_0^1\int_{\domainot^{\kk}}\cost(\vel(t,x))\diff\flow(t,  x)\diff t\overset{\eqref{eq:ode_pushroward}}{=}\int_0^1\int_{\domainot^{\kk}}\cost(\vel(t,\diffeo(t,x)))\diff\base( x)\diff t\overset{\eqref{eq:ode_proof}}{=}\int_0^1\int_{\domainot^{\kk}}\cost(\partial_t\diffeo(t,x)) \diff \base(x)\diff t\\
&\overset{\textnormal{Jensen}}{\ge}\int_{\domainot^{\kk}}\cost\left(\int_0^1\partial_t\diffeo(t,x)\diff t\right)\diff \base( x)=\int_{\domainot^{\kk}}\cost\left(\diffeo(1,x)-x\right)\diff \base( x)\overset{\eqref{eq:trans_inv+diag}}{=}\int_{\domainot^{\kk}}\cost\left(\diffeo(1,x)\right) \diff \base(x)\\
&\ge \min_{\textnormal{joint couplings $\joint$ of } \marg_1,\ldots,\marg_{\kk}}\int_{\domainot^{\kk}}\cost(x_1,\ldots,x_{\kk})\diff \joint(x_1,\ldots,  x_{\kk}),
\end{align*}
where the last inequality follows from $\law(\diffeo(1,X))=\flow(1,\cdot)$ which is a coupling of $\marg_1,\ldots,\marg_{\kk}$ when $X\sim \base$.\\

\noindent  \textbf{Dynamic $\le$ static.} Let $\jointop$ be a coupling of $\marg_1,\ldots,\marg_{\kk}$ which is an  optimal solution to the left-hand side of  \eqref{eq:static_dynamic_equiv}. Let $T:\R^{\dd}\to \R^{\kk\dd}$ be any  transport map\footnote{Again, when the source measure is non-atomic the existence of such map is guaranteed by  \cite[Theorem A]{MR2288266}.} between $\base$ and $\jointop$, and let 
\[
\diffeo(t,x):=(1-t)x+tT(x)\quad\textnormal{for}\quad t\in [0,1]. 
\]
Suppose that $x\mapsto \diffeo(t,x)$ is invertible for all $t\in [0,1]$. Then $(\diffeo(t,x))_{t\in [0,1]}$ satisfies the equation
\begin{equation}
\label{eq:ode_proof_2}
\forall~t\in [0,1],\quad \quad T(x)-x=\partial_t\diffeo(t,x)=\vel(t,\diffeo(t,x)),
\end{equation}
with
\begin{equation}
\label{eq:vec_opt}
\vel(t,x):=T\left(\diffeo^{-1}(t,x)\right)-\diffeo^{-1}(t,x).
\end{equation}
Let
\begin{equation}
\label{eq:ode_pushroward_2}
\flow(t,\cdot):=\law(\diffeo(t,X))\quad\textnormal{where}\quad X\sim \base,
\end{equation}
and note that $(\flow,\m:=\vel\flow)\in \CC(\base,\marg_1,\ldots,\marg_{\kk})$. Since $\cost$ is $\base$-translation-invariant, we have
\begin{equation}
\label{eq:trans_inv+diag_2}
\cost\left(T(x)-x\right)=\cost\left(T(x)\right),\qquad \textnormal{$\base$-almost-surely}.
\end{equation}
Thus,
\begin{align*}
&\int_0^1\int_{\domainot^{\kk}}\cost(\vel(t,x))\diff \flow(t, x)\diff t\overset{\eqref{eq:ode_pushroward_2}}{=}\int_0^1\int_{\domainot^{\kk}}\cost(\vel(t,\diffeo(t,x))) \diff\base( x)\diff t\overset{\eqref{eq:ode_proof_2}}{=}\int_0^1\int_{\domainot^{\kk}}\cost(T(x)-x)\diff \base( x)\diff t\\
&\overset{\eqref{eq:trans_inv+diag_2}}{=}\int_{\domainot^{\kk}}\cost(T(x))\diff \base( x)=\int_{\domainot^{\kk}}\cost(x)\diff\jointop(x)\le  \min_{\textnormal{joint couplings $\joint$ of } \marg_1,\ldots,\marg_{\kk}}\int_{\domainot^{\kk}}\cost(x_1,\ldots,x_{\kk})\diff \joint( x_1,\ldots, x_{\kk}).
\end{align*}
The argument outlined above is only meant to provide intuition. We will now start building the  necessary tools for a rigorous proof of  Theorem \ref{thm:static_dynamic_equiv}. 
\subsection{Convexity of the dynamical cost}
\label{subsec:cvx_cost}
In this section we establish the convexity of the dynamical cost and its consequences. We start with the classical result relating the convexity properties of a function and its perspective \cite{combettes2018perspective}.

\begin{lemma}
\label{lem:cvx_cost}
If $\cost:\R^{\kk\dd}\to \R$ is convex then the  function 
\[
[0,\infty)\times \R^{\kk\dd}\ni (\pi,m)\mapsto \cost\left(\frac{m}{\pi}\right)\pi
\]
is convex. In particular, the function $(\flow,\m)\mapsto \actd(\flow,\m)$ is also convex.
\end{lemma}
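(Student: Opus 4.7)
The result is the classical convexity of the perspective of a convex function, as cited in \cite{combettes2018perspective}; the plan is to give a self-contained argument via the one-line algebraic identity underlying the perspective construction, then to deal with the boundary $\pi=0$, and finally to integrate to obtain the functional statement.

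For the pointwise step I would fix $\pi_1,\pi_2>0$, $m_1,m_2\in\R^{\kk\dd}$, and $\lambda\in(0,1)$, set $\bar\pi := \lambda\pi_1+(1-\lambda)\pi_2$ and $\alpha := \lambda\pi_1/\bar\pi \in (0,1)$, and observe the identity
\[
\frac{\lambda m_1+(1-\lambda) m_2}{\bar\pi} \;=\; \alpha\,\frac{m_1}{\pi_1} + (1-\alpha)\,\frac{m_2}{\pi_2}.
\]
Applying convexity of $\cost$ to the right-hand side and multiplying through by $\bar\pi$ gives the perspective inequality $g(\bar\pi,\lambda m_1+(1-\lambda)m_2)\le \lambda g(\pi_1,m_1)+(1-\lambda)g(\pi_2,m_2)$, where $g(\pi,m):=\cost(m/\pi)\pi$. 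For the boundary $\pi=0$ I would use the standard lower-semicontinuous extension ($g(0,0):=0$ and $g(0,m):=+\infty$ for $m\ne 0$, equivalently the recession function $\cost^{\infty}$), under which convexity is preserved. In fact this boundary behavior is immaterial for the functional statement, since the absolute continuity $\m\ll\flow$ imposed in the definition of $\CC$ forces $m=0$ wherever $\pi=0$.

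For the ``in particular'' assertion I would fix $(\flow_i,\m_i)\in\CC(\base,\marg_1,\ldots,\marg_{\kk})$ for $i=1,2$, pick any common dominating measure $\sigma$ on $[0,1]\times\domainot^{\kk}$ (for instance $\sigma := (\flow_1+\flow_2)\otimes \mathrm dt$), and write $\flow_i=\pi_i\,\sigma$, $\m_i=m_i\,\sigma$ with $m_i=0$ on $\{\pi_i=0\}$ by absolute continuity. Then
\[
\actd(\flow_i,\m_i) \;=\; \int g(\pi_i,m_i)\,\mathrm d\sigma,
\]
and because the densities of the affine combination $(\lambda\flow_1+(1-\lambda)\flow_2,\,\lambda\m_1+(1-\lambda)\m_2)$ with respect to $\sigma$ are exactly $(\lambda\pi_1+(1-\lambda)\pi_2,\,\lambda m_1+(1-\lambda)m_2)$, the pointwise convexity of $g$ passes under the integral sign to yield convexity of $\actd$ in $(\flow,\m)$.

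The only real obstacle is notational book-keeping at the zero set $\{\pi=0\}$, and this is resolved once and for all by the absolute continuity condition built into $\CC$, which confines the argument to the open half-space $(0,\infty)\times\R^{\kk\dd}$ where the one-line identity above applies cleanly.
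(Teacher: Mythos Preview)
Your proposal is correct and is precisely the standard perspective-function argument; the paper itself does not give a proof but simply cites \cite{combettes2018perspective}, and your one-line identity plus the dominating-measure step is exactly the content of that reference. One small inaccuracy: the lower-semicontinuous extension at $\pi=0$ is the recession function $L^{\infty}(m)=\lim_{s\to\infty}L(sm)/s$, which is not in general $+\infty$ for $m\ne 0$ (take $L$ linear), so ``$g(0,m):=+\infty$ for $m\ne 0$'' and ``equivalently the recession function'' are not the same thing---but as you rightly note, the absolute-continuity condition $\m\ll\flow$ makes this boundary issue irrelevant for $\actd$.
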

One consequence of Lemma \ref{lem:cvx_cost} is that the dynamical cost decreases under convolution. Let us define the type of the convolution kernels that we will consider. 
\begin{definition}
\label{def:conv_ker}
A function $\reg:\R^{\kk\dd}\to \R$ is a \emph{probability kernel}
if  it is even and satisfies
\[
\forall~ x\in\R^{\kk\dd},\qquad \reg(x)\ge 0,\qquad \int_{\R^{\kk\dd}} \reg(y-x)\diff y=1, \qquad\text{and}\qquad \supp(\reg)\subseteq B_R,
\]
where $B_R$ is the ball in $\R^{\kk\dd}$ centered at the origin with radius $R$ for some $R>0$.
\end{definition}
The existence of a probability kernel as in Definition \ref{def:conv_ker} is known; cf. \cite[Appendix C.4]{MR2597943}. In particular, using the latter construction,  there exists a sequence of probability kernels $\{\reg(\ell)\}_{\ell}$ converging weakly  to a point-mass at the origin \cite[Appendix C.4, Theorem 6(iii)]{MR2597943}.

The following result shows that the dynamical cost decreases under convolution with probability kernels. 
\begin{lemma}
\label{lem:cost_reg}
Suppose that the cost function $\cost:\R^{\kk\dd}\to \R$ is convex, and fix a  probability kernel $\reg$. Then, for any $(\flow ,\m)\in C\left([0,1]; \mathcal P(\domainot^{\kk})\right)\times \mathcal M([0,1]\times\domainot^{\kk};\R^{\kk\dd})$, such that $\m\ll\flow$, we have
\begin{equation}
\label{eq:cost_reg}
\actd(\flow\ast\reg,\m\ast\reg)\le \actd(\flow,\m),
\end{equation}
where
\begin{equation}
\label{eq:extend_dyn_cost_def}
\actd(\flow\ast\reg,\m\ast\reg):=\int_0^1\int_{\R^{\kk\dd}} \cost\left(\frac{(\m\ast\reg)(t,x)}{(\flow\ast\reg)(t,x)}\right)(\flow\ast \reg)(t,x)\diff x\diff t.
\end{equation}
\end{lemma}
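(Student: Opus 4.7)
The plan is to reduce \eqref{eq:cost_reg} to a pointwise-in-$(t,x)$ application of Jensen's inequality, and then to integrate and use Fubini. Let $\vel := \diff\m/\diff\flow$, which exists by the hypothesis $\m \ll \flow$. For each $(t,x) \in [0,1]\times\R^{\kk\dd}$ with $(\flow \ast \reg)(t,x) > 0$, I would introduce the probability measure
\[
\bar\mu_{t,x}(\diff y) := \frac{\reg(x-y)\,\diff\flow(t, y)}{(\flow\ast\reg)(t,x)}
\]
on $\R^{\kk\dd}$. By definition of convolution, $\int \vel(t,y)\,\diff\bar\mu_{t,x}(y) = (\m\ast\reg)(t,x)/(\flow\ast\reg)(t,x)$, so Jensen's inequality for the convex function $\cost$ gives the pointwise estimate
\[
\cost\!\left(\frac{(\m\ast\reg)(t,x)}{(\flow\ast\reg)(t,x)}\right)(\flow\ast\reg)(t,x) \;\le\; \int_{\R^{\kk\dd}} \cost(\vel(t,y))\,\reg(x-y)\,\diff\flow(t,y).
\]
This is exactly the integrated form of the assertion in Lemma \ref{lem:cvx_cost} that the perspective $(\pi,m) \mapsto \cost(m/\pi)\pi$ is convex.

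Next, I would integrate this inequality over $x \in \R^{\kk\dd}$ and $t \in [0,1]$. On the left one recovers precisely $\actd(\flow\ast\reg, \m\ast\reg)$ as defined in \eqref{eq:extend_dyn_cost_def}. On the right Fubini, together with the identity $\int_{\R^{\kk\dd}}\reg(x-y)\,\diff x = 1$ coming from $\reg$ being a probability kernel, collapses the double integral to
\[
\int_0^1\!\int_{\R^{\kk\dd}}\cost(\vel(t,y))\,\diff\flow(t,y)\,\diff t = \actd(\flow,\m),
\]
which yields \eqref{eq:cost_reg}.

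The two technical wrinkles I would address are handling the set $Z := \{(t,x) : (\flow\ast\reg)(t,x)=0\}$ and applying Fubini safely. For (i), the extended perspective function satisfies $F(0,0)=0$, which is the convention implicit in \eqref{eq:extend_dyn_cost_def}; since $\m\ll\flow$ and $\reg\ge 0$, we have $(\m\ast\reg)(t,x)=0$ whenever $(\flow\ast\reg)(t,x)=0$, so on $Z$ the pointwise bound reads $0\le 0$ and the argument goes through without change. For (ii), if $\actd(\flow,\m) = +\infty$ the inequality is trivial, so I may assume finiteness; since $\cost$ is convex on $\R^{\kk\dd}$ it admits an affine minorant $y \mapsto a\cdot y + b$, and splitting $\cost = (\cost - a\cdot y - b) + (a\cdot y + b)$ reduces the Fubini step to a nonnegative integrand (where Tonelli applies) plus an affine term whose behaviour under convolution is immediate from $\int\reg = 1$. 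The main conceptual step is really the choice of the correct averaging measure $\bar\mu_{t,x}$; once that is identified, the rest is Jensen plus bookkeeping, and I do not expect a substantive obstacle.
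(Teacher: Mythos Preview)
Your proposal is correct and follows essentially the same approach as the paper: introduce the density $\vel=\diff\m/\diff\flow$, apply Jensen pointwise in $(t,x)$ to get $\cost\bigl((\m\ast\reg)/(\flow\ast\reg)\bigr)(\flow\ast\reg)\le \int \cost(\vel)\,\reg(x-y)\,\diff\flow$, then integrate and use $\int\reg=1$. The only cosmetic difference is that the paper phrases the Jensen step as applying the convexity of the perspective $(\pi,m)\mapsto \cost(m/\pi)\pi$ (Lemma~\ref{lem:cvx_cost}) against the measure $\flow(t,\cdot)$, whereas you normalize first to the probability measure $\bar\mu_{t,x}$ and apply ordinary Jensen to $\cost$; these are equivalent, and your treatment of the zero set and the Fubini justification is in fact more explicit than the paper's.
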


\begin{proof}
Fix $x\in \R^{\kk\dd}$ and let $\vel(t,y):=\frac{\diff  \m(t,y)}{\diff \flow(t,y)}$ for $(t,y)\in [0,1]\times\domainot^{\kk}$. Applying Jensen's inequality with the measure $\flow(t,\cdot)$, and the convex function (see Lemma \ref{lem:cvx_cost}), 
\[
 \R^{\kk\dd}\times [0,\infty)\ni  (\reg\vel,\reg)\mapsto\cost\left(\frac{\reg \vel}{\reg}\right)\reg= \cost(\vel)\reg,
\] 
we get
\begin{equation}
\label{eq:jensen_kappa}
\begin{split}
&\cost\left(\frac{ (\m\ast\reg)(t,x)}{ (\flow\ast\reg)(t,x)}\right) (\flow\ast \reg)(t,x)=\cost\left(\frac{\int_{\R^{\kk\dd}}\reg(y-x) \diff  \m(t,y) }{\int_{\R^{\kk\dd}} \reg(y-x)\diff \flow(t,y)}\right)\int_{\R^{\kk\dd}}\reg(y-x)  \diff\flow(t, y)\\
&=\cost\left(\frac{\int_{\R^{\kk\dd}}\reg(y-x) \vel(t,y)\diff  \flow(t,y) }{\int_{\R^{\kk\dd}} \reg(y-x)\diff \flow(t,y)}\right)\int_{\R^{\kk\dd}}\reg(y-x)  \diff\flow(t, y)\le \int_{\R^{\kk\dd}} \left[\cost\left(\frac{\reg(y-x)\vel(t,y)}{\reg(y-x)}\right)\reg(y-x)\right] \diff\flow(t, y)\\
&=\int_{\R^{\kk\dd}} \left[\cost\left(\frac{\diff  \m(t,y)}{\diff \flow(t,y)}\right)\reg(y-x)\right] \diff\flow(t, y).
\end{split}
\end{equation}
Integrating \eqref{eq:jensen_kappa} against $\diff x$ and $\diff t$, and recalling that  $\int\reg(y-x)\diff x=1$, we get
\begin{align*}
&\int_0^1\int_{\R^{\kk\dd}}  \cost\left(\frac{( \m\ast\reg)(t,x)}{(\flow\ast\reg)(t,x)}\right) (\flow\ast \reg)(t,x)\diff x\diff t\le\int_0^1\left\{ \left[\int_{\R^{\kk\dd}} \cost\left(\frac{  \diff\m(t, y)}{\diff \flow(t, y)}\right)\diff \flow(t,y)\right] \int_{\R^{\kk\dd}} \reg(y-x)\diff x\right\}\diff t\\
&=\int_0^1 \int_{\R^{\kk\dd}} \cost\left(\frac{  \diff\m(t, y)}{\diff \flow(t, y)}\right)\diff \flow(t,y) \diff t,
\end{align*}
establishing \eqref{eq:cost_reg}. 
\end{proof}

\begin{remark}
Lemma \ref{lem:cost_reg} and its proof follow \cite[Lemma 8.1.10]{ambrosio2008gradient}, but we need to require $\reg$ to be a probability kernel, rather than any convolution kernel, because our cost $\cost$ need not be 1-homogeneous. 
\end{remark}

\subsection{Constructions of flows}
\label{subsec:construct_flow}
In this section we show how to relate transport plans in the static problem to flows in the dynamical problem, and vice versa. If we inspect the argument in Section \ref{subsec:proof_easy} we see that there are two assumptions that need to be justified. 
\begin{enumerate}
\item Static $\ge$ dynamic. Justify that given $(\flow,\m)$ the equation \eqref{eq:ode_proof} has a unique solution satisfying \eqref{eq:ode_pushroward}.
\item Static $\le$ dynamic. The invertibility of $\diffeo(t,x):=(1-t)x+tT(x)$ for all $t\in [0,1]$, where $T$ transports $\base$ to $\jointop$.
\end{enumerate}
If we do not assume sufficient regularity then both assumptions need not  be true. To address (1) we use the probabilistic representation approach of Ambrosio \cite{MR2096794}, while for (2) we use  an argument of Brenier \cite{MR2006306}.

Let us begin with the preliminaries for (1). The probabilistic representation approach shows that while \eqref{eq:ode_proof} may not have a \emph{unique} solution, there exists a probability measure over possible solutions of \eqref{eq:ode_proof} which can be used as a substitute. 
\begin{proposition}
\label{prop:thm8.2.1AGS}
Let  $\domainot\subseteq \R^{\dd}$ be a compact convex set, $\base$ a probability measure over $\domainot^{\kk}$, and $\marg_1,\ldots,\marg_{\kk}$ probability measures over $\domainot$. Fix $(\flow,\m)\in \CC(\base,\marg_1,\ldots,\marg_{\kk})$. Then, there exists a probability measure $\mathbb P$ on $\domainot^{\kk}\times C([0,1];\domainot^{\kk})$ which is concentrated on the set of pairs $(x,\diffeo)$, where
\begin{equation}
\label{eq:ode_proof_ambrosio}
\partial_t\diffeo(t,x)=\vel(t,\diffeo(t,x)),~\textnormal{for almost-everywhere}~t\in (0,1),\quad\quad\diffeo(0,x)=x,
\end{equation}
and for any continuous and bounded function $\test:\domainot^{\kk}\to \R$,
\begin{equation}
\label{eq:eq8.2.1AGS}
\int_{\domainot^{\kk}}\test(x) \diff\flow(t, x)=\int_{\domainot^{\kk}\times C([0,1];\domainot^{\kk})}\test(\diffeo(t,x)) \diff \mathbb P(x, \diffeo)\quad\quad\textnormal{for all }t\in [0,1].
\end{equation}
\end{proposition}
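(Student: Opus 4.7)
The statement is a vector-valued version of Ambrosio's superposition principle \cite{MR2096794} applied to the continuity equation on $\domainot^{\kk}$. My plan is to regularize the data, solve the regularized ODE classically, and then pass to the limit using tightness.

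First, I would convolve $(\flow,\m)$ spatially with a probability kernel $\reg_\epsilon$ as in Definition \ref{def:conv_ker}, whose support shrinks to the origin, on a slightly enlarged neighborhood of $\domainot^{\kk}$; after mixing in a small multiple of Lebesgue measure to ensure strict positivity, the ratio $\vel^\epsilon := (\m\ast\reg_\epsilon)/(\flow\ast\reg_\epsilon)$ is smooth and globally Lipschitz in $x$. The continuity equation $\partial_t\flow^\epsilon + \nabla(\flow^\epsilon\vel^\epsilon)=0$ remains valid, so classical ODE theory gives a unique $\diffeo^\epsilon(t,x)$ with
\begin{equation*}
\partial_t \diffeo^\epsilon(t,x) = \vel^\epsilon(t, \diffeo^\epsilon(t,x)), \qquad \diffeo^\epsilon(0,x)=x,
\end{equation*}
and the method of characteristics yields $\diffeo^\epsilon(t,\cdot)_\sharp \flow^\epsilon(0,\cdot) = \flow^\epsilon(t,\cdot)$. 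Define $\mathbb{P}^\epsilon \in \PP(\domainot^{\kk} \times C([0,1];\domainot^{\kk}))$ as the law of $(X, \diffeo^\epsilon(\cdot, X))$ with $X \sim \flow^\epsilon(0,\cdot)$; by construction it satisfies \eqref{eq:eq8.2.1AGS} at the regularized level.

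Next I would establish tightness of $\{\mathbb{P}^\epsilon\}_\epsilon$. The first marginal converges weakly to $\base$, while tightness in the path space $C([0,1];\domainot^{\kk})$ follows from Arzel\`a--Ascoli applied to the uniform $L^p$ energy bound
\begin{equation*}
\int_{\domainot^{\kk}} \int_0^1 |\vel^\epsilon(t,\diffeo^\epsilon(t,x))|^p \diff t \diff \flow^\epsilon(0,x) \le \int_0^1 \int_{\domainot^{\kk}} |\vel|^p \diff \flow(t,\cdot) \diff t,
\end{equation*}
which is Lemma \ref{lem:cost_reg} applied with the convex cost $|\cdot|^p$; this supplies a uniform modulus of continuity in $t$ in probability. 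Prokhorov then yields a subsequential weak limit $\mathbb{P}$. The identity \eqref{eq:eq8.2.1AGS} passes to the limit using weak convergence $\flow^\epsilon(t,\cdot) \to \flow(t,\cdot)$ on the left and continuity of $(x,\diffeo) \mapsto \test(\diffeo(t))$ on the right.

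The main obstacle is showing that $\mathbb{P}$ concentrates on pairs solving \eqref{eq:ode_proof_ambrosio}. Since $\vel$ is merely $L^p$ with respect to $\flow\,\diff t$, the functional $(x,\diffeo)\mapsto \int_0^t \vel(s,\diffeo(s))\,\diff s$ is not continuous under weak convergence of $\mathbb{P}^\epsilon$. The standard remedy is to approximate $\vel$ in $L^p(\flow\,\diff t)$ by smooth fields $\vel_\delta$, derive an estimate of the form
\begin{equation*}
\int \sup_{t\in[0,1]} \Bigl|\diffeo(t) - x - \int_0^t \vel_\delta(s,\diffeo(s))\,\diff s\Bigr|^p \diff \mathbb{P}(x,\diffeo) \le C\,\|\vel - \vel_\delta\|^p_{L^p(\flow\,\diff t)},
\end{equation*}
first for each $\mathbb{P}^\epsilon$ (where the integral equation holds exactly with $\vel^\epsilon$), then pass to $\epsilon \downarrow 0$ using weak convergence and lower semicontinuity, and finally let $\delta \downarrow 0$. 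This is precisely the content of \cite[Theorem 3.2]{MR2096794}, and I would invoke it directly rather than reproduce the delicate semicontinuity arguments.
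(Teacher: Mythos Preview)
Your outline is essentially a sketch of the proof of Ambrosio's superposition principle itself, and you end by saying you would invoke \cite[Theorem 3.2]{MR2096794} directly. The paper does the same thing more economically: it simply cites \cite[Theorem 8.2.1]{ambrosio2008gradient} (the book version of the same result) to obtain a measure $\mathbb P$ on $\R^{\kk\dd}\times C([0,1];\R^{\kk\dd})$ satisfying \eqref{eq:ode_proof_ambrosio}--\eqref{eq:eq8.2.1AGS}, without rehearsing the regularization/tightness/semicontinuity machinery. So on the core analytic content your approach and the paper's agree; yours is just more verbose before arriving at the same citation.

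There is, however, one point the paper treats explicitly that your proposal omits: the proposition asserts that $\mathbb P$ lives on $\domainot^{\kk}\times C([0,1];\domainot^{\kk})$, not merely on $\R^{\kk\dd}\times C([0,1];\R^{\kk\dd})$ or on your ``slightly enlarged neighborhood''. Ambrosio's theorem, applied as a black box, only gives the latter. The paper's proof is mostly devoted to closing this gap: from \eqref{eq:eq8.2.1AGS} at each fixed $t$ one gets $\mathbb P(\{\diffeo(t,x)\notin\domainot^{\kk}\})=0$ since $\flow(t,\cdot)$ is supported in $\domainot^{\kk}$; one then takes a countable union over rational $t$ and uses continuity of paths together with closedness of $\domainot^{\kk}$ to conclude that $\mathbb P$-a.e.\ path stays in $\domainot^{\kk}$ for all $t\in[0,1]$. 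This step is short but not automatic, and you should include it.
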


\begin{proof}
By \cite[Theorem 8.2.1]{ambrosio2008gradient} the theorem holds true if $\domainot=\R^{\dd}$. Thus, if we interpret $(\flow,\m)$ as measures on $\R^{\kk\dd}$, then we know that there exists a probability measure $\mathbb P$ on $\R^{\kk\dd}\times C([0,1];\R^{\kk\dd})$ such that $(x,\diffeo)\sim \mathbb P$ satisfies \eqref{eq:ode_proof_ambrosio}, and for any continuous and bounded function $\test:\R^{\kk\dd}\to \R$,
\begin{equation}
\label{eq:eq8.2.1AGS_Euclid}
\int_{\R^{\kk\dd}}\test(x) \diff \flow(t, x)=\int_{\R^{\kk\dd}\times C([0,1];\R^{\kk\dd})}\test(\diffeo(t,x)) \diff\mathbb P(x, \diffeo)\quad\quad\textnormal{for all }t\in [0,1].
\end{equation}
Applying \eqref{eq:eq8.2.1AGS_Euclid} with $t=0$, we can conclude that $\mathbb P$ is supported on $\domainot^{\kk}\times C([0,1];\R^{\kk\dd})$, since the marginal of $x$ under $\mathbb P$ is $\base$, which by assumption is supported on $\domainot^{\kk}$. Thus, it suffices to show that 
\begin{equation}
\label{eq:support_on_D}
\mathbb P(\{(x,\diffeo):\diffeo(t,x)\notin \domainot^{\kk} \textnormal{ for some $t\in [0,1]$ and $x\in \domainot^{\kk}$} \})=0.
\end{equation}
Applying \eqref{eq:eq8.2.1AGS_Euclid} for a fixed $t\in [0,1]$ we get that 
\begin{equation}
\label{eq:support_on_D_t}
\mathbb P(\{(x,\diffeo):\diffeo(t,x)\notin \domainot^{\kk} \textnormal{ for some $x\in \domainot^{\kk}$}\})=0,
\end{equation}
since $\flow(t,\cdot)$ is supported on $\domainot^{\kk}$. Hence, it remains to show that the negligible sets for each $t\in [0,1]$ do not pile up. Let $\mathbb Q_{[0,1]}$ be the set of rational numbers in $[0,1]$. Then, 
\begin{align*}
&\mathbb P(\{(x,\diffeo):\diffeo(t,x)\notin \domainot^{\kk} \textnormal{ for some $t\in \mathbb Q_{[0,1]}$ and $x\in \domainot^{\kk}$} \})\\
&\le \sum_{t\in \mathbb Q_{[0,1]}}\mathbb P(\{(x,\diffeo):\diffeo(t,x)\notin \domainot^{\kk} \textnormal{ for some $x\in \domainot^{\kk}$}\})=0.
\end{align*}
Since $\domainot^{\kk}$ is closed, and $\mathbb Q_{[0,1]}$ is dense in $[0,1]$, we have that $\diffeo(t,x)\in \domainot^{\kk}$ for all $t\in [0,1]$ and $x\in  \domainot^{\kk}$, if and only if, $\diffeo(t,x)\in \domainot^{\kk}$ for all $t\in \mathbb Q_{[0,1]}$ and $x\in  \domainot^{\kk}$. The latter has measure zero which proves \eqref{eq:support_on_D}. 
\end{proof}
Next we move to the preliminaries necessary for addressing (2). Rather than using a transport map $T$ from $\base$ to $\jointop$, we interpolate in law between $\base$ and $\jointop$, and then construct a matching vector field to mimic this interpolation. 
\begin{proposition}
\label{prop:velocity_exist}
Let $\domainot\subseteq \R^{\dd}$ be a compact convex set. Fix  $q_a,q_b\in \pspace(\domainot^{\kk})$ and $q_{ab}\in\Pi(q_a,q_b)$. Let
\begin{equation}
\label{eq:flow_opt_def}
\flow(t,\cdot):=\law(X(t,Y^a,Y^b))\quad\textnormal{with}\quad (Y^a,Y^b)\sim q_{ab},
\end{equation}
and $X:[0,1]\times \domainot^{\kk}\times \domainot^{\kk}\to\R^{\kk\dd}$ given by
 \begin{equation}
\label{eq:linear_interp_def}
X(t,y^a,y^b):=(1-t)y^a+ty^b.
\end{equation} 
Given $t\in [0,1]$ and $x\in \domainot$ disintegrate\footnote{Colloquially, $q_{ab}^{t,x}=\law((Y^a,Y^b)|X(t,Y^a,Y^b)=x)$ where $(Y^a,Y^b)\sim  q_{ab}$.} $\diff q_{ab}(y^a, y^b)=\diff q_{ab}^{t,x}( y^a,y^b)\otimes \diff \flow(t, x)$ so that, for every continuous and bounded function $\test:\domainot^{\kk}\times \domainot^{\kk}\times \domainot^{\kk}\to \R$, we have
\begin{equation}
\label{eq:gamma_cond}
\int_{\domainot^{\kk}\times \domainot^{\kk}}\test(y^a,y^b,(1-t)y^a+ty^b)\diff q_{ab}( y^a, y^b)=\int_{\domainot^{\kk}}\int_{\domainot^{\kk}\times \domainot^{\kk}}\test(y^a,y^b, x)\diff q_{ab}^{t,x}( y^a, y^b)\diff\flow(t, x).
\end{equation}
Define the vector-valued measure $\m(t,\cdot)$ as being absolutely continuous with respect to $\flow(t,\cdot)$, with the vector-valued density $\vel:[0,1]\times\domainot^{\kk}\to\R^{\kk\dd}$ given by
\begin{equation}
\label{eq:vec_coupling}
\vel(t,x):=\int_{\domainot^{\kk}\times\domainot^{\kk} }(y^b-y^a) \diff  q_{ab}^{t,x}(y^a, y^b).
\end{equation}
Then, $(\flow,\m)$ solves the continuity equation $\partial_t\flow+\nabla\m=0$ in a weak sense. 
\end{proposition}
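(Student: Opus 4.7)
The plan is to verify the weak continuity equation \eqref{eq:weak_cteq} directly by ``lifting'' everything to the coupling $q_{ab}$ via the interpolation $X(t,y^a,y^b) = (1-t)y^a + t y^b$, performing an elementary calculation in the $(y^a, y^b)$ variables, and then projecting back down using the disintegration \eqref{eq:gamma_cond}.

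First, I would fix an arbitrary $\test \in C^1([0,1] \times \domainot^{\kk};\R)$ and consider, for each $(y^a, y^b) \in \domainot^{\kk} \times \domainot^{\kk}$, the $C^1$ map $t \mapsto \test(t, X(t, y^a, y^b))$. By the chain rule and the fundamental theorem of calculus,
\begin{equation}
\label{eq:pp_ftc}
\test(1, y^b) - \test(0, y^a) = \int_0^1 \partial_t \test(t, X(t,y^a,y^b)) \diff t + \int_0^1 \langle \nabla \test(t, X(t,y^a,y^b)), y^b - y^a \rangle \diff t,
\end{equation}
since $\partial_t X(t,y^a,y^b) = y^b - y^a$. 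Integrating \eqref{eq:pp_ftc} against $q_{ab}$ and using Fubini (which is justified since $\test, \nabla \test$ are bounded on the compact set $[0,1]\times\domainot^{\kk}$ and $|y^b - y^a|$ is bounded on $\domainot^{\kk} \times \domainot^{\kk}$) gives an identity between four terms.

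Second, I would rewrite each of these four terms in the statement's notation. For the left-hand side of \eqref{eq:pp_ftc} after integration, observe that $\flow(0,\cdot) = \law(X(0,Y^a,Y^b)) = \law(Y^a) = q_a$ and $\flow(1,\cdot) = q_b$, so
\begin{equation*}
\int \test(1, y^b) - \test(0, y^a) \diff q_{ab}(y^a, y^b) = \int_{\domainot^{\kk}} \test(1,x) \diff \flow(1,x) - \int_{\domainot^{\kk}} \test(0,x) \diff \flow(0,x).
\end{equation*}
For the first term on the right, applying \eqref{eq:gamma_cond} with the continuous bounded test function $(y^a,y^b,x) \mapsto \partial_t \test(t,x)$ (for each fixed $t$) and then integrating in $t$ yields
\begin{equation*}
\int_0^1 \int \partial_t \test(t, X(t,y^a,y^b)) \diff q_{ab}(y^a,y^b) \diff t = \int_0^1 \int_{\domainot^{\kk}} \partial_t \test(t,x) \diff \flow(t,x) \diff t.
\end{equation*}
For the second term, applying \eqref{eq:gamma_cond} with the test function $(y^a, y^b, x) \mapsto \langle \nabla \test(t, x), y^b - y^a \rangle$ and then using the definition \eqref{eq:vec_coupling} of $\vel(t,x)$ gives
\begin{equation*}
\int \langle \nabla \test(t, X), y^b - y^a\rangle \diff q_{ab} = \int_{\domainot^{\kk}} \left\langle \nabla \test(t,x), \int (y^b - y^a) \diff q_{ab}^{t,x}(y^a, y^b) \right\rangle \diff \flow(t,x) = \int_{\domainot^{\kk}} \langle \nabla\test(t,x), \vel(t,x)\rangle \diff \flow(t,x),
\end{equation*}
and integrating in $t$ converts this into $\int_0^1 \int_{\domainot^{\kk}} \langle \nabla \test(t,x), \diff \m(t,x)\rangle \diff t$ by the definition $\diff \m = \vel \diff \flow$. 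Combining the four pieces yields \eqref{eq:weak_cteq}.

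The only step requiring care is justifying the interchange of the integrals in $t$ and in $q_{ab}$ (or equivalently $\flow(t,\cdot)$) when applying \eqref{eq:gamma_cond} pointwise in $t$ and then integrating. This is the main technical obstacle, but it is routine: on $[0,1] \times \domainot^{\kk} \times \domainot^{\kk} \times \domainot^{\kk}$ the integrands $\partial_t \test(t,x)$ and $\langle \nabla \test(t,x), y^b - y^a\rangle$ are bounded and measurable, so Fubini--Tonelli applies. One should also check measurability of the disintegration $t \mapsto q_{ab}^{t,x}$ so that $\vel(t,x)$ is itself a Borel function; this follows from the standard disintegration theorem on Polish spaces applied to the measure $\diff t \otimes \diff q_{ab}(y^a,y^b)$ with the map $(t,y^a,y^b) \mapsto (t, X(t,y^a,y^b))$. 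With these measurability points handled, the computation above produces the weak continuity equation, and, as a byproduct of \eqref{eq:vec_coupling} together with the boundedness of $|y^b-y^a|$ on $\domainot^{\kk}$, one also sees that $\m \ll \flow$ with bounded density, giving the integrability required for $(\flow, \m)$ to be a legitimate candidate element for the class in Definition \ref{def:dynamic_main}.
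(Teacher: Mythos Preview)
Your proposal is correct and follows essentially the same approach as the paper: both arguments reduce to the chain-rule identity $\frac{\diff}{\diff t}\test(t,(1-t)y^a+ty^b)=\partial_t\test+\langle y^b-y^a,\nabla\test\rangle$, integrate it against $q_{ab}$ and $\diff t$, and then identify the resulting terms with the two sides of \eqref{eq:weak_cont_proof} via the disintegration \eqref{eq:gamma_cond} and the definition \eqref{eq:vec_coupling} of $\vel$. The only difference is order of presentation (you start from the FTC identity and identify terms, the paper starts from the two integrals and combines them), and you add explicit remarks on Fubini and measurability that the paper leaves implicit.
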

\begin{proof}
We need to show that for all $\test\in C^1([0,1]\times\domainot^{\kk})$,
\begin{equation}
\label{eq:weak_cont_proof}
\int_0^1\int_{\domainot^{\kk}}\partial_t\test(t,x)\diff \flow(t,x)\diff t+\int_0^1\int_{\domainot^{\kk}}\langle\vel(t,x),\nabla\test(t,x) \rangle\diff \flow(t,x)\diff t-\int_{\domainot^{\kk}}\test(1,x) \diff  q_b(x)+\int_{\domainot^{\kk}}\test(0,x) \diff q_a( x)=0.
\end{equation}
To verify \eqref{eq:weak_cont_proof} we note that the first integral in \eqref{eq:weak_cont_proof} reads
\begin{align*}
\int_0^1\int_{\domainot^{\kk}}\partial_t\test(t,x) \diff\flow(t,x)\diff t\overset{\eqref{eq:flow_opt_def}}{=}\int_0^1\int_{\domainot^{\kk}\times \domainot^{\kk}}\partial_t\test(t,(1-t)y^a+ty^b)\diff q_{ab}( y^a, y^b)\diff t,
\end{align*}
while the second integral in \eqref{eq:weak_cont_proof} reads
\begin{align*}
&\int_0^1\int_{\domainot^{\kk}}\langle\vel(t,x),\nabla\test(t,x)\rangle \diff\flow(t, x)\diff t\overset{\eqref{eq:vec_coupling}}{=}\int_0^1\int_{\domainot^{\kk}}\left\langle\int_{\domainot^{\kk}\times \domainot^{\kk} }(y^b-y^a) \diff q_{ab}^{t,x}( y^a, y^b),\nabla\test(t,x)\right\rangle\diff \flow(t, x)\diff t\\
&\overset{\eqref{eq:gamma_cond}}{=}\int_0^1\int_{\domainot^{\kk}\times \domainot^{\kk}}\langle y^b-y^a,\nabla\test(t,(1-t)y^a+ty^b) \diff q_{ab}( y^a, y^b)\diff t.
\end{align*}
Thus, the sum of the first two integrals  in \eqref{eq:weak_cont_proof} can be written as
\begin{align*}
&\int_0^1\int_{\domainot^{\kk}\times \domainot^{\kk}}[\partial_t\test(t,(1-t)y^a+ty^b)+\langle(y^b-y^a),\nabla\test(t,(1-t)y^a+ty^b)\rangle] \diff  q_{ab}(y^a,y^b)\diff t\\
&=\int_0^1\int_{\domainot^{\kk}\times \domainot^{\kk}}\frac{\diff}{\diff t}\test(t,(1-t)y^a+ty^b)\diff  q_{ab}(y^a, y^b)\diff t=\int_{\domainot^{\kk}\times \domainot^{\kk}}[\test(1,y^b)-\test(0,y^a)]\diff q_{ab}( y^a, y^b)\\
&=\int_{\domainot^{\kk}}\test(1,y^b) \diff q_b( y^b)-\int_{\domainot^{\kk}}\test(0,y^a) \diff  q_a(y^a),
\end{align*}
where the last identity used $q_{ab}\in\Pi(q_a,q_b)$. 
\end{proof}

\begin{remark}
\label{rem:neumann}
Implicit in the proof of Proposition \ref{prop:velocity_exist} is that the vector field defined by \eqref{eq:vec_coupling} satisfies the Neumann boundary condition. The intuition behind this fact is that the vector field along the trajectories \eqref{eq:linear_interp_def} always sends mass into $\domainot$, unless both $y^a,y^b\in \partial \domainot$, in which case the mass is transported parallel to the boundary $\partial \domainot$, and hence  $\vel$ satisfies the Neumann boundary condition.
\end{remark}

\begin{remark}
It is instructive to consider Proposition \ref{prop:thm8.2.1AGS} and Proposition \ref{prop:velocity_exist} in the case where $\base=\delta_0$. Starting with Proposition \ref{prop:velocity_exist}, in this setting we have $q^{t,x}_{ab}(y_a,y_b)=\delta_{\frac{x}{t}}(y_b)$, and hence $\vel(t,x)=\frac{x}{t}$. The solution of the continuity equation $\partial_t\flow+\nabla \m=0$ is $\flow(t,\cdot)=\law(tY_b)$, where $Y_b\sim \jointop$, which indeed interpolates between $\delta_0$ at $t=0$ and $\flowop$ at $t=1$. Further, while the vector field $\vel(t,x)$ diverges as $t\downarrow 0$, the norm of the vector field is finite as 
\[
\int_0^1\int_{\domainot^{\kk}}\left|\frac{\diff\m(t, x)}{\diff\flow(t, x)}\right|^2\diff\flow(t, x)\diff t=\int_0^1\int_{\domainot^{\kk}}\left|\frac{tx}{t}\right|^2\diff\flowop (x)\diff t=\int_{\domainot^{\kk}}|x|^2\diff\flowop (x)<+\infty. 
\]
Looking at the trajectories of the individual particles,
\begin{equation}
\label{eq:lagrange_delta}
\partial_t\diffeo(t,x)=\vel(t,\diffeo(t,x))=\frac{\diffeo(t,x)}{t},
\end{equation}
we find that,  for some $x_0\in\domainot^{\kk}$, $\diffeo(t,x) = t x_0$ for all $x\in \domainot^{\kk}$. In particular, we see that $x_0$ is not uniquely defined since $\diffeo(0,x)=0$ regardless of $x_0$. Indeed, the vector field $\vel(t,x)=\frac{x}{t}$ is not Lipschitz, so \eqref{eq:lagrange_delta} does not have a unique solution. Rather, one has a distribution, as in Proposition \ref{prop:thm8.2.1AGS}, over possible trajectories. These trajectories are generated by first sampling $x_0\sim \jointop$, and then setting $\diffeo(t,x):=t x_0$ for all $x\in \domainot^{\kk}$. 
\end{remark}

\subsection{Proof of Theorem \ref{thm:static_dynamic_equiv}}
\label{subsec:proof_main}
We are now ready for the proof of Theorem \ref{thm:static_dynamic_equiv}.  We start by noting that  $\inf_{\joint\in \Pi(\marg_1,\ldots,\marg_{\kk})}\acts(\joint)=\min_{\joint\in \Pi(\marg_1,\ldots,\marg_{\kk})}\acts(\joint)$ since, with respect to the weak convergence topology, $\joint\mapsto \acts(\joint)$ is continuous (by the continuity of $\cost$), and $\Pi(\marg_1,\ldots,\marg_{\kk})$ is compact (by the boundedness of $\domainot$)\footnote{This argument is standard, e.g., see \cite[Theorem 1.4]{Santambrogio_book} for the argument in the two-marginal case.}. As in Section \ref{subsec:proof_easy}, we show that the right-hand side of \eqref{eq:static_dynamic_equiv} is both larger and smaller than its left-hand side. \\

\noindent \textbf{Dynamic $\ge$ static.} Fix $(\flow,\m)\in\CC(\base,\marg_1,\ldots,\marg_{\kk})$, and a probability kernel $\reg$. Denote $\flow^{\reg}:=\flow\ast\reg$, $\m^{\reg}:=\m\ast\reg$, and let $\vel^{\reg}$ be the density of $\m^{\reg}$ with respect to $\flow^{\reg}$. Note that $\vel^{\reg}$ is continuous as it is the ratio of two continuous functions. We denote $\base^{\reg}:=\base\ast\reg$, and for $l\in [\kk]$, denote $\marg_l^{\reg}:=\marg_l\ast\reg$. We let $\domainot_{\reg}^{\kk}$ be the domain of $\flow^{\reg}$, where  $\domainot_{\reg} \subseteq \R^{\dd}$ is some compact set containing $\domainot$.

\begin{claim} 
The following hold:
\begin{align}
&(\flow^{\reg} ,\m^{\reg})\in C\left([0,1]; \mathcal P(\domainot_{\reg}^{\kk})\right)\times \mathcal M([0,1]\times\domainot_{\reg}^{\kk};\R^{\kk\dd}); \m^{\reg}\ll\flow^{\reg},\label{eq:ker_domain_flow}\\
&\int_0^1\int_{\domainot^{\kk}}\left|\frac{\diff\m^{\reg}(t, x)}{\diff\flow^{\reg}(t, x)}\right|^p\diff\flow^{\reg}(t, x)\diff t<+\infty\quad\textnormal{for some $p>1$,}\label{eq:ker_integ_condition}\\
&\partial_t\flow^{\reg}+\nabla \m^{\reg}=0\textnormal{ in a weak sense},\label{eq:ker_cont_eq}\\
&\flow^{\reg}(0, \cdot)=\base^{\reg}\,\textnormal{ and } \,\flow_l^{\reg}(1,\cdot)=\marg_l^{\reg}\textnormal{ for all }l\in [\kk]\label{eq:ker_marg}.
\end{align}
\end{claim}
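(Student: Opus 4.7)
The plan is to dispatch the four properties in turn, with all but one following from routine facts about convolution. The one genuine estimate is \eqref{eq:ker_integ_condition}, which is a direct adaptation of the Jensen argument already packaged in Lemma \ref{lem:cost_reg}.

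For \eqref{eq:ker_domain_flow}, I would first observe that weak continuity of $t \mapsto \flow^\reg(t,\cdot)$ follows from weak continuity of $t \mapsto \flow(t,\cdot)$ via the duality $\int \test \,\diff \flow^\reg = \int (\test \ast \reg) \,\diff \flow$, since $\test \ast \reg$ is bounded and continuous whenever $\test$ is; the total variation of $\m^\reg$ is controlled by that of $\m$ by the same duality; and both $\flow^\reg$ and $\m^\reg$ are supported in $\domainot^\kk + B_R \subseteq \domainot_\reg^\kk$ by the support assumption on $\reg$. For $\m^\reg \ll \flow^\reg$ the argument is pointwise: if $\flow^\reg(t,x) = 0$ then $\flow(t,\cdot)$ vanishes on $\supp(\reg(\cdot - x))$, and since $\m \ll \flow$ the same is true of $\m(t,\cdot)$, forcing $\m^\reg(t,x) = 0$.

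Property \eqref{eq:ker_marg} is essentially definitional, since spatial convolution commutes with evaluation at fixed $t$ and with taking marginals (projection onto a subset of coordinates commutes with convolution, up to projecting $\reg$ onto the same coordinates). For \eqref{eq:ker_cont_eq}, I would take a test function $\test \in C^1([0,1] \times \domainot_\reg^\kk)$, set $\tilde\test(t,y) := \int \reg(x-y) \test(t,x) \,\diff x$ for $y \in \domainot^\kk$, and reduce the weak equation \eqref{eq:weak_cteq} for $(\flow^\reg, \m^\reg)$ against $\test$ to the same equation for $(\flow, \m)$ against $\tilde\test$ via Fubini, the evenness of $\reg$, and integration by parts (taking $\reg$ to be a smooth mollifier, as provided by \cite{MR2597943}). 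The implicit Neumann condition on $\m^\reg$ causes no trouble because $\m^\reg$ has compact support well inside $\domainot_\reg^\kk$.

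The key step is \eqref{eq:ker_integ_condition}. Here the plan is to rerun the Jensen argument of Lemma \ref{lem:cost_reg} with the convex function $\cost$ replaced by $|\cdot|^p$ (which is itself convex for $p > 1$), yielding
\[
\int_0^1 \int_{\R^{\kk\dd}} \left| \frac{\diff \m^\reg(t,x)}{\diff \flow^\reg(t,x)} \right|^p \diff \flow^\reg(t,x) \diff t \leq \int_0^1 \int_{\domainot^\kk} \left| \frac{\diff \m(t,y)}{\diff \flow(t,y)} \right|^p \diff \flow(t,y) \diff t,
\]
with the right-hand side finite by the definition of $\CC(\base, \marg_1, \ldots, \marg_\kk)$. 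This is the only place a quantitative input enters, and the main (mild) obstacle is just to confirm that the joint convexity of the perspective of $|\cdot|^p$ required by the Jensen step follows from Lemma \ref{lem:cvx_cost} applied to $\cost = |\cdot|^p$, so that the proof of Lemma \ref{lem:cost_reg} transfers without change.
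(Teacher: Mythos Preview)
Your proposal is correct. For \eqref{eq:ker_domain_flow}, \eqref{eq:ker_cont_eq}, and \eqref{eq:ker_marg} you essentially match the paper (you give more detail on \eqref{eq:ker_domain_flow}, where the paper simply says convolution smooths out the measures; your reduction of \eqref{eq:ker_cont_eq} to the weak equation for $(\flow,\m)$ against $\test\ast\reg$ is exactly what the paper writes out). The genuine difference is \eqref{eq:ker_integ_condition}: the paper observes that $\vel^{\reg}=\m^{\reg}/\flow^{\reg}$ is continuous as a ratio of continuous functions, hence bounded on the compact set $\domainot_{\reg}^{\kk}$, so any $L^p$ norm is finite. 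You instead rerun the Jensen argument of Lemma~\ref{lem:cost_reg} with $\cost$ replaced by $|\cdot|^p$ to get $\|\vel^{\reg}\|_{L^p(\flow^{\reg})}\le\|\vel\|_{L^p(\flow)}$. Both work; the paper's route is shorter but leans on continuity of $\vel^{\reg}$ (which needs a word at points where $\flow^{\reg}$ vanishes), while yours sidesteps that and gives a sharper quantitative bound at the cost of invoking one more lemma.
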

\begin{proof}
The assertions $(\flow^{\reg} ,\m^{\reg})\in C\left([0,1]; \mathcal P(\domainot_{\reg}^{\kk})\right)\times \mathcal M([0,1]\times\domainot_{\reg}^{\kk};\R^{\kk\dd})$, and $ \m^{\reg}\ll\flow^{\reg}$, hold as the convolution smooths out the measures. The integrability condition holds by the compactness of $\domainot_{\reg}^{\kk}$ and since, as noted above, $\vel^{\reg}$ is continuous.  That $(\flow^{\reg} ,\m^{\reg})$ satisfy the continuity equation in a weak sense follows as, for any $\test\in C^1([0,1]\times\domainot_{\reg}^{\kk}; \R)$, 
\begin{align}
\label{eq:cont_eq_convolution}
\begin{split}
&\int_0^1\int_{\domainot_{\reg}^{\kk}}\partial_t\test(t,x)\diff\flow^{\reg}(t, x)\diff t+\int_0^1\int_{\domainot_{\reg}^{\kk}}\langle\nabla\test(t,x),\diff\m^{\reg}(t, x)\rangle \diff t\\
&=\int_0^1\int_{\domainot^{\kk}}\left(\int_{\R^{\kk\dd}}\partial_t\test(t,x) \reg(y-x) \diff x\right)\diff \flow(t,y)\diff t+\int_0^1\int_{\domainot^{\kk}}\left\langle\int_{\R^{\kk\dd}}\reg(y-x)\nabla\test(t,x)\diff x, \diff  \m(t,y)\right\rangle \diff t\\
&=\int_0^1\int_{\domainot^{\kk}}\partial_t(\test\ast \reg)(t,y) \diff \flow(t,y)\diff t+\int_0^1\int_{\domainot^{\kk}}\left\langle\nabla(\test\ast\reg)(t,y), \diff  \m(t,y)\right\rangle \diff t\\
&=\int_{\domainot^{\kk}}(\eta\ast\reg)(1,x)\diff\flow(1, x)-\int_{\domainot^{\kk}}(\eta\ast\reg)(0,x)\diff\flow(0, x)\\
&=\int_{\domainot^{\kk}}\int_{\R^{\kk\dd}}\reg(y-x)\test(1,y)\diff\flow(1, x)-\int_{\domainot^{\kk}}\int_{\R^{\kk\dd}}\reg(y-x)\test(0,x)\diff\flow(0, x)\\
&=\int_{\domainot_{\reg}^{\kk}}\test(1,y)\diff \flow^{\reg}(1,y)-\int_{\domainot_{\reg}^{\kk}}\test(0,y)\diff \flow^{\reg}(0,y),
\end{split}
\end{align}
where we used $\test\ast\reg\in C^1([0,1]\times\domainot^{\kk}; \R)$, and that $(\flow,\m)$ satisfies the continuity equation. Lastly, $\flow^{\reg}(0, \cdot)=\base^{\reg}$, and $\flow_l^{\reg}(1,\cdot)=\marg_l^{\reg}$ for all $l\in [\kk]$, hold by construction.
\end{proof}
Below we apply Proposition \ref{prop:thm8.2.1AGS} with  the domain $\domainot_{\reg}$, flow  $(\flow^{\reg},\m^{\reg})$, and the associated $\mathbb P^{\reg}$.  We write $\diffeo^{\reg}$ for solutions of \eqref{eq:ode_proof_ambrosio} with $\vel^{\reg}$. 
The function $\cost\circ\vel^{\reg}$ is continuous and bounded, since $\vel^{\reg}$ and $\cost$ are both continuous, and since $\vel^{\reg}$  has bounded support. Hence, applying \eqref{eq:eq8.2.1AGS} with $\test(x):=\cost(\vel^{\reg}(t,x))$, for any fixed $t\in [0,1]$, we get
\begin{align}
\begin{split}
\label{eq:bigger_than}
&\int_0^1\int_{\domainot^{\kk}}\cost(\vel(t,x))\diff\flow(t, x)\diff t\overset{\eqref{eq:cost_reg}}{\ge}\int_0^1\int_{\domainot_{\reg}^{\kk}}\cost(\vel^{\reg}(t,x))\diff \flow^{\reg}(t,x)\diff t\\
&\overset{\eqref{eq:eq8.2.1AGS}}{=}\int_0^1\int_{\domainot_{\reg}^{\kk}\times C([0,1];\domainot_{\reg}^{\kk})}\cost(\vel^{\reg}(t,\diffeo^{\reg}(t,x)))\diff \mathbb P^{\reg}(  x, \diffeo^{\reg})\diff t\\
&\overset{\eqref{eq:ode_proof_ambrosio}}{=}\int_0^1\int_{\domainot_{\reg}^{\kk}\times C([0,1];\domainot_{\reg}^{\kk})}\cost(\partial_t\diffeo^{\reg}(t,x))\diff \mathbb P^{\reg}(  x, \diffeo^{\reg})\diff t\overset{\textnormal{Jensen}}{\ge}\int_{\domainot_{\reg}^{\kk}\times C([0,1];\domainot_{\reg}^{\kk})}\cost\left(\int_0^1\partial_t\diffeo^{\reg}(t,x)\diff t\right)\diff\mathbb P^{\reg}(  x,\diffeo^{\reg})\\
&=\int_{\domainot_{\reg}^{\kk}\times C([0,1];\domainot_{\reg}^{\kk})}\cost\left(\diffeo^{\reg}(1,x)-x\right)\diff \mathbb P^{\reg}( x, \diffeo^{\reg}).
\end{split}
\end{align}
The assumption that $\cost$ is $\base$-translation-invariant does not imply that $\cost$ is $\base^{\reg}$-translation-invariant, so we cannot immediately argue in \eqref{eq:bigger_than} as we did in Section \ref{subsec:proof_easy}. The following argument circumvents this issue. Let $\{\reg(\ell)\}_{\ell}$ be a sequence of probability kernels converging weakly  to a point-mass at the origin, and let $\tilde{\domainot}^{\kk}$ be a compact set containing the support of $\flow^{\reg(\ell)}$ for all $\ell$. By \eqref{eq:bigger_than}, for every $\ell$,
\begin{equation}
\label{eq:bigger_than_ell}
\int_0^1\int_{\domainot^{\kk}}\cost(\vel(t,x))\diff \flow(t,x)\diff t\ge \int_{\tilde{\domainot}^{\kk}\times C([0,1];\tilde{\domainot}^{\kk})}\cost\left(\diffeo^{\reg(\ell)}(1,x)-x\right)\diff \mathbb P^{\reg(\ell)}( x,\diffeo^{\reg(\ell)}).
\end{equation}
Next we take the limit $\ell\to\infty$ on the right-hand side of \eqref{eq:bigger_than_ell}. To this end we note that the right-hand side of \eqref{eq:bigger_than_ell} depends only on the joint law of $\mathbb P^{\reg(\ell)}$ over $x$ and $\diffeo^{\reg(\ell)}(1,\cdot)$, which is a measure over $\tilde{\domainot}^{\kk}\times \tilde{\domainot}^{\kk}$. By compactness, there exists a converging subsequence with limit $\joint^{\infty}$ which is a probability measure over $\tilde{\domainot}^{\kk}\times \tilde{\domainot}^{\kk}$. Further, applying \eqref{eq:eq8.2.1AGS} for each $\ell$ at $t=0$ and $t=1$, and taking the limit $\ell\to\infty$, we conclude that the first marginal of  $\joint^{\infty}$ on  $\tilde{\domainot}^{\kk}$ is $\base$, while the second marginal of  $\joint^{\infty}$ on  $\tilde{\domainot}^{\kk}$ has marginals $\marg_1,\ldots,\marg_{\kk}$ on $\domainot$. In particular, we may replace $\tilde{\domainot}^{\kk}$ by $\domainot^{\kk}$. Hence, we conclude from \eqref{eq:bigger_than_ell} that
\begin{align}
\begin{split}
\label{eq:bigger_than_infty}
\int_0^1\int_{\domainot^{\kk}}\cost(\vel(t,x))\diff\flow(t, x)\diff t&\ge \int_{\domainot^{\kk}\times \domainot^{\kk}}\cost(y-x)\diff \joint^{\infty}(x,y)\overset{\eqref{eq:trans_inv_cost_thm}}{=}\int_{\domainot^{\kk}\times \domainot^{\kk}}\cost(y)\diff \joint^{\infty}(x, y)\\
&\ge \min_{\joint\in \Pi(\marg_1,\ldots,\marg_{\kk})}\acts(\joint),
\end{split}
\end{align}
so it follows that
\begin{equation}
\label{eq:dyn_stat_big_proof}
\inf_{(\flow,\m)\in \CC(\base,\marg_1,\ldots,\marg_{\kk})}\actd(\flow,\m)\ge\min_{\joint\in \Pi(\marg_1,\ldots,\marg_{\kk})}\acts(\joint),
\end{equation}
as claimed.

\noindent \textbf{Dynamic $\le$ static.}

Let $\jointop\in \Pi(\marg_1,\ldots,\marg_{\kk})$ be an optimal solution to the left-hand side of \eqref{eq:static_dynamic_equiv}. Set $q_a:=p$, $q_b:=\jointop$, and choose any $q_{ab}\in \Pi(q_a,q_b)$. Define the flow $\flowop$ by \eqref{eq:flow_opt_def}, and define $ \mop$ as the vector-valued measure whose vector-valued density $\velop$ with respect to $\flowop$ is given by  \eqref{eq:vec_coupling}. We first need to show that  $(\flowop,\mop)$ is a valid flow. 

\begin{claim} 
$(\flowop,\mop)\in\CC(\base,\marg_1,\ldots,\marg_{\kk})$. 
\end{claim}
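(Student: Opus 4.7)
The plan is to check the four defining conditions of $\CC(\base,\marg_1,\ldots,\marg_{\kk})$ in \eqref{eq:cont_set} one by one, drawing on Proposition \ref{prop:velocity_exist} for the continuity equation and exploiting compactness of $\domainot$ for the integrability.

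First, I would verify the weak continuity $t\mapsto\flowop(t,\cdot)$ and the boundary/marginal conditions. Since $X(t,y^a,y^b)=(1-t)y^a+ty^b$ is continuous in $t$, for any bounded continuous $\test$ on $\domainot^{\kk}$ the map $t\mapsto\int \test(x)\diff\flowop(t,x)=\int \test((1-t)y^a+ty^b)\diff q_{ab}(y^a,y^b)$ is continuous by dominated convergence, which gives $\flowop\in C([0,1];\pspace(\domainot^{\kk}))$; convexity of $\domainot^{\kk}$ ensures $\flowop(t,\cdot)$ is supported in $\domainot^{\kk}$ for every $t$. At $t=0$, $X(0,Y^a,Y^b)=Y^a\sim q_a=\base$, so $\flowop(0,\cdot)=\base$. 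At $t=1$, $X(1,Y^a,Y^b)=Y^b\sim q_b=\jointop$, whose marginals are $\marg_1,\ldots,\marg_{\kk}$ by the choice of $\jointop\in\Pi(\marg_1,\ldots,\marg_{\kk})$.

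Next I would check $\mop\ll\flowop$ and the integrability condition. By construction $\mop=\velop\flowop$ with $\velop$ defined in \eqref{eq:vec_coupling}, so $\mop\ll\flowop$ is automatic. Using Jensen's inequality and the compactness of $\domainot$,
\[
|\velop(t,x)|\le\int_{\domainot^{\kk}\times\domainot^{\kk}}|y^b-y^a|\diff q_{ab}^{t,x}(y^a,y^b)\le \mathrm{diam}(\domainot^{\kk}),
\]
so $\velop$ is uniformly bounded. Consequently $\int_0^1\int_{\domainot^{\kk}}|\velop(t,x)|^p\diff\flowop(t,x)\diff t\le \mathrm{diam}(\domainot^{\kk})^p<+\infty$ for any $p>1$, and $\mop\in\mathcal M([0,1]\times\domainot^{\kk};\R^{\kk\dd})$ with finite total variation.

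Finally, the weak continuity equation $\partial_t\flowop+\nabla\mop=0$ in the sense of \eqref{eq:weak_cteq} is exactly the conclusion of Proposition \ref{prop:velocity_exist} applied with $q_a=\base$, $q_b=\jointop$, and the chosen $q_{ab}\in\Pi(q_a,q_b)$. The main (mild) subtlety here is the Neumann boundary condition implicit in \eqref{eq:weak_cteq}: since the straight segments $(1-t)y^a+ty^b$ stay in the convex set $\domainot^{\kk}$, the derivation in Proposition \ref{prop:velocity_exist} uses test functions $\test\in C^1([0,1]\times\domainot^{\kk})$ with no boundary terms, so no flux through $\partial\domainot^{\kk}$ is generated, as observed in Remark \ref{rem:neumann}. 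Combining these four verifications yields $(\flowop,\mop)\in\CC(\base,\marg_1,\ldots,\marg_{\kk})$.
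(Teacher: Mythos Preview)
Your proof is correct and follows essentially the same approach as the paper: verify weak continuity of $t\mapsto\flowop(t,\cdot)$ via dominated convergence, check the boundary/marginal conditions at $t=0,1$ from the construction, obtain uniform boundedness of $\velop$ from compactness of $\domainot$ to get the $L^p$ integrability, and invoke Proposition~\ref{prop:velocity_exist} for the weak continuity equation. If anything, your version is slightly more explicit (the diameter bound, the remark on convexity keeping the support in $\domainot^{\kk}$, and the Neumann boundary observation via Remark~\ref{rem:neumann}).
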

\begin{proof}
First we show that $\flowop\in C\left([0,1]; \mathcal P(\domainot^{\kk})\right)$. Indeed, given a continuous and bounded function $\test:\domainot^{\kk}\to \R$ we have
\[
\int_{\domainot^{\kk}}\eta(x)\diff \flowop(t,x)\overset{\eqref{eq:flow_opt_def}}{=}\int_{\domainot^{\kk}\times \domainot^{\kk}}\eta((1-t)y^a+ty^b)\diff q_{ab}(y^a,y^b),
\]
which is a continuous function in $t$. By definition, $\mop\in \mathcal M([0,1]\times\domainot^{\kk}; \R^{\kk\dd})$, so we verified $(\flowop ,\mop)\in C\left([0,1]; \mathcal P(\domainot^{\kk})\right)\times \mathcal M([0,1]\times\domainot^{\kk}; \R^{\kk\dd})$. The integrability condition $\int_0^1\int_{\domainot^{\kk}}\left|\frac{\diff\m}{\diff\flow}\right|^p\diff\flow\diff t<+\infty$ holds since, by the compactness of $\domainot$, the vector field $\vel(t,x)$ defined by \eqref{eq:gamma_cond} is bounded uniformly by a constant for all $t\in [0,1]$ and $x\in\R^{\kk\dd}$. That $(\flowop, \mop)$ satisfies the continuity equation holds by Proposition \ref{prop:velocity_exist}. The fact that $\flowop(0,\cdot)=\base$, and $(\flowop)_l(1,\cdot)=\marg_l\textnormal{ for all }l\in [\kk]$, hold by construction. 
\end{proof}
We now show that $\actd(\flowop,\mop)\le \acts(\jointop)$. Indeed,
\begin{align}
\begin{split}
\label{eq:smaller_than}
&\actd(\flowop,\mop)=\\
&\int_0^1\int_{\domainot^{\kk}}\cost(\velop(t,x))\diff \flowop(t,x)\diff t\overset{\eqref{eq:vec_coupling}}{=}\int_0^1\int_{\domainot^{\kk}}\cost\left(\int_{\domainot^{\kk}\times\domainot^{\kk}}(y^b-y^a)\diff q_{ab}^{t,x}(y^a, y^b)\right)\diff\flowop(t, x)\diff t\\
&\overset{\textnormal{Jensen}}{\le}\int_0^1\int_{\domainot^{\kk}}\int_{\domainot^{\kk}\times\domainot^{\kk}}\cost(y^b-y^a) \diff q_{ab}^{t,x}(y^a, y^b)\diff \flowop(t, x)\diff t=\int_{\domainot^{\kk}\times\domainot^{\kk}}\cost(y^b-y^a) \diff q_{ab}(y^a,y^b)\\
&\overset{\eqref{eq:trans_inv_cost_thm}}{=}\int_{\domainot^{\kk}}\cost(y^b) \diff q^b( y^b)=\int_{\domainot^{\kk}}\cost(x_1,\ldots,x_k)\diff \jointop(x_1,\ldots,x_k)=\acts(\jointop).
\end{split}
\end{align}
Thus,
\begin{equation}
\label{eq:dyn_stat_small_proof}
\inf_{(\flow,\m)\in \CC(\base,\marg_1,\ldots,\marg_{\kk})}\actd(\flow,\m)\le \actd(\flowop,\mop)\le \acts(\jointop)=\min_{\joint\in \Pi(\marg_1,\ldots,\marg_{\kk})}\acts(\joint).
\end{equation}
Combining \eqref{eq:dyn_stat_big_proof} and \eqref{eq:dyn_stat_small_proof} we conclude that 
\begin{equation}
\label{eq:dyn_stat_equiv_inf_proof}
\inf_{(\flow,\m)\in \CC(\base,\marg_1,\ldots,\marg_{\kk})}\actd(\flow,\m)=\min_{\joint\in \Pi(\marg_1,\ldots,\marg_{\kk})}\acts(\joint),
\end{equation}
as claimed. 

Let us show that the infimum in the dynamical problem is in fact a minimum. By \eqref{eq:dyn_stat_equiv_inf_proof} we see that all the inequalities in  \eqref{eq:dyn_stat_small_proof} saturate, which means that $(\flowop,\mop)$ attains the infimum in the dynamical problem, and hence
\begin{equation}
\label{eq:dyn_stat_equiv_inf_proof_min}
\min_{(\flow,\m)\in \CC(\base,\marg_1,\ldots,\marg_{\kk})}\actd(\flow,\m)=\min_{\joint\in \Pi(\marg_1,\ldots,\marg_{\kk})}\acts(\joint).
\end{equation}
 It remains to establish items (1-2) in  Theorem \ref{thm:static_dynamic_equiv}. For item (1), since all the inequalities in \eqref{eq:dyn_stat_small_proof} saturate, we may conclude that all the inequalities in \eqref{eq:smaller_than} saturate, which shows that a minimizer $\jointop$ of the static problem induces a minimizer $(\flowop,\mop)$ of the dynamical problem. The fact  that $\flowop(1,\cdot)=\jointop$ holds by the construction of Proposition \ref{prop:velocity_exist}. 
 
For item (2), let $\{\reg(\ell)\}_{\ell}$ be a sequence of probability kernels  converging weakly to a point-mass at the origin, and given an optimal flow $(\flowop,\mop)\in\CC(\base,\marg_1,\ldots,\marg_{\kk})$  denote $\flow^{(\ell)}:=\flowop\ast \reg(\ell)$  and $\m^{(\ell)}:=\mop\ast \reg(\ell)$. Note that by construction $\flow^{(\ell)}\to \flowop$ weakly as $\ell\to\infty$. As shown in  the argument establishing Equations \eqref{eq:bigger_than_ell}-\eqref{eq:bigger_than_infty}, 
\begin{align}
\begin{split}
\label{eq:biggerEqual_than_opt}
& \min_{\joint\in \Pi(\marg_1,\ldots,\marg_{\kk})}\acts(\joint)\overset{\eqref{eq:dyn_stat_equiv_inf_proof_min} }{=}\actd(\flowop,\mop)\ge \lim_{\ell\to\infty}\int_{\domainot^{\kk}\times C([0,1];\domainot^{\kk})}\cost\left(\diffeo^{\reg(\ell)}(1,x)-x\right)\diff \mathbb P^{\reg(\ell)}( x,\diffeo^{\reg(\ell)})\\
&\ge \min_{\joint\in \Pi(\marg_1,\ldots,\marg_{\kk})}\acts(\joint).
\end{split}
\end{align}
It follows that the inequalities in \eqref{eq:biggerEqual_than_opt} saturate so that, arguing as in \eqref{eq:bigger_than_ell}-\eqref{eq:bigger_than_infty},
\begin{align}
\begin{split}
&\min_{\joint\in \Pi(\marg_1,\ldots,\marg_{\kk})}\acts(\joint)=\lim_{\ell\to\infty}\int_{\domainot^{\kk}\times C([0,1];\domainot^{\kk})}\cost\left(\diffeo^{\reg(\ell)}(1,x)-x\right)\diff\mathbb P^{\reg(\ell)}(x,\diffeo^{\reg(\ell)})\\
&=\int_{\domainot^{\kk}\times \domainot^{\kk}}\cost(y-x)\diff \joint^{\infty}(x, y),
\end{split}
\end{align}
where $\joint^{\infty}$ is a probability measure on $\domainot^{\kk}\times \domainot^{\kk}$ such that  the first marginal of $\joint^{\infty}$ on $\domainot^{\kk}$ is $\base$, and the second marginal of $\joint^{\infty}$ on $\domainot^{\kk}$ is $\flowop(1,\cdot)$. Hence,
\begin{equation}
\label{eq:pi_opt_static}
\min_{\joint\in \Pi(\marg_1,\ldots,\marg_{\kk})}\acts(\joint)=\int_{\domainot^{\kk}\times \domainot^{\kk}}\cost(y-x)\diff \joint^{\infty}(x, y)\overset{\eqref{eq:trans_inv_cost_thm}}{=}\int_{\domainot^{\kk}}\cost(y) \diff\flowop(1, y)=\acts(\flowop(1,\cdot)),
\end{equation}
as claimed.

\section{The dual dynamical multi-marginal optimal transport problem}
\label{sec:dual}
\subsection{Main result}
\label{subsec:main_result}
In this section we state the dual of our dynamical multi-marginal optimal transport problem (Definition \ref{def:dynamic_main}). We begin by recalling the dual of the static multi-marginal optimal transport problem (Definition \ref{def:static_main}).

\begin{definition}[Dual static multi-marginal  optimal transport]
\label{def:dual_static_main}
Let $\marg_1,\ldots,\marg_{\kk}$ be probability measures on a Borel set  $\domainot\subseteq \R^{\dd}$.  The \emph{dual static multi-marginal    optimal transport problem} with cost $\cost:\R^{\kk\dd}\to\R$ is
\begin{equation}
\label{eq:dual_static_mmot_def}
\sup_{\lm_1,\ldots,\lm_{\kk}}\sum_{l=1}^{\kk}\int_{\domainot}\lm_l(x_l)\diff \marg_l(x_l),
\end{equation}
where the supremum is taken over all  functions $\lm_1,\ldots,\lm_{\kk}:\domainot\to \R$, with $\lm_l\in L^1(\marg_l)$ for all $l\in [\kk]$, satisfying 
\begin{equation}
	\label{eq:dual_cost_condition}
	\sum_{l=1}^{\kk}\lm_l(x_l)\le \cost(x_1,\ldots,x_{\kk})\qquad\text{for}\quad \text{$(\otimes_{l=1}^{\kk}\marg_l)$-almost-all}\qquad (x_1,\ldots,x_{\kk})\in \domainot^{\kk}.
\end{equation}
\end{definition}

Standard duality (e.g. \cite[Theorem 1.3]{villani2021topics} for the two-marginal case whose proof extends to the multi-marginal setting) shows that the value of \eqref{eq:dual_static_mmot_def}, subject to \eqref{eq:dual_cost_condition}, coincides with the value of the static multi-marginal optimal problem (Definition \ref{def:static_main}). The following theorem is the main result of this section.

\begin{theorem}
\label{thm:dual_dynamical}
Let $\domainot\subseteq\R^{\dd}$ be a compact convex set, and let $\base$ be  a probability measure on $\domainot^{\kk}$. Let $\cost:\R^{\kk\dd}\to \R$ be a continuously differentiable cost function satisfying $\lim_{\beta\uparrow+\infty}\frac{1}{\beta}\int_{\R^{\kk\dd}}\cost(\beta x)\diff \base(x)=0$, and suppose that $\cost$ is convex and $\base$-translation-invariant. Let $\CC(\base,\marg_1,\ldots,\marg_{\kk})$ be as in \eqref{eq:cont_set}, $\actd$ as in \eqref{eq:actd_def}, and define 
\begin{equation}
\label{eq:dual_class}
\ID(\marg_1,\ldots,\marg_{\kk}):=
\begin{cases}
\lm\in C^1([0,1]\times\domainot^{\kk};\R) \\
\lm_l:\domainot\to \R \quad \textnormal{$\marg_l$-integrable for each $l\in [\kk]$}\\
\partial_t\lm(t,x)+\cost^*(\nabla \lm(t,x))\le 0,\quad \textnormal{for all}\quad (t,x)\in [0,1]\times\domainot^{\kk}\\
\sum_{l=1}^{\kk}\lm_l(x_l)\le \lm(1,x)\quad \textnormal{for}\quad \textnormal{$(\otimes_{l=1}^{\kk}\marg_l)$-almost-all.} \quad x=(x_1,\ldots,x_{\kk})\in \domainot^{\kk}
\end{cases}.
\end{equation}
Then,
\begin{equation}
\begin{split}
\label{eq:dual_thm}
&\inf_{(\flow,\m)\in \CC(\base,\marg_1,\ldots,\marg_{\kk})}\actd(\flow,\m)=\sup_{(\lm,\{\lm_l\}_{l=1}^{\kk})\in \ID(\marg_1,\ldots,\marg_{\kk})}\left\{\sum_{l=1}^{\kk}\int_{\domainot}\lm_l(x_l)\diff\marg_l( x_l) - \int_{\domainot^{\kk}}\lm(0,x)\diff \base(x)\right\}.
\end{split}
\end{equation} 
\end{theorem}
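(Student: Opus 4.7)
The plan is to prove weak duality directly by integration by parts combined with the Fenchel--Young inequality, and then to obtain strong duality by a two-step reduction that invokes (i) classical two-marginal Benamou--Brenier duality for convex costs, and (ii) the static multi-marginal Kantorovich duality, linked via a minimax swap.

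\textbf{Weak duality.} Fix $(\flow,\m)\in\CC(\base,\marg_1,\ldots,\marg_{\kk})$ with velocity $\vel := \diff\m/\diff\flow$, and $(\lm,\{\lm_l\}_{l=1}^k)\in \ID(\marg_1,\ldots,\marg_{\kk})$. Since the marginals of $\flow(1,\cdot)$ are $\marg_1,\ldots,\marg_{\kk}$, integrating the domination constraint $\sum_l\lm_l(x_l)\le\lm(1,x)$ against $\flow(1,\cdot)$ gives $\sum_l\int\lm_l\diff\marg_l\le\int\lm(1,\cdot)\diff\flow(1,\cdot)$. Using $\lm$ as a test function in the weak continuity equation \eqref{eq:weak_cteq}, together with $\flow(0,\cdot)=\base$, yields
\[
\int\lm(1,\cdot)\diff\flow(1,\cdot)-\int\lm(0,\cdot)\diff\base=\int_0^1\!\!\int\bigl(\partial_t\lm+\langle\nabla\lm,\vel\rangle\bigr)\diff\flow\diff t.
\]
The Hamilton--Jacobi inequality $\partial_t\lm+\cost^*(\nabla\lm)\le 0$ and the Fenchel--Young inequality $\langle\nabla\lm,\vel\rangle\le\cost^*(\nabla\lm)+\cost(\vel)$ together bound the right-hand side by $\int_0^1\!\!\int\cost(\vel)\diff\flow\diff t=\actd(\flow,\m)$, proving $\sup\le\inf$ in \eqref{eq:dual_thm}.

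\textbf{Strong duality via reduction.} Parametrize $(\flow,\m)\in\CC$ by $\joint:=\flow(1,\cdot)\in\Pi(\marg_1,\ldots,\marg_{\kk})$; the primal splits as
\[
\inf_{(\flow,\m)\in\CC}\actd(\flow,\m)=\inf_{\joint\in\Pi}\;\inf_{\flow(0)=\base,\,\flow(1)=\joint}\actd(\flow,\m).
\]
For each fixed $\joint$ the inner problem is a classical two-marginal Benamou--Brenier problem whose duality, under the growth assumption $\lim_{\beta\to\infty}\beta^{-1}\!\int\cost(\beta x)\diff\base(x)=0$ on the convex cost $\cost$ (extending \cite{benamou2000computational, MR2006306, buttazzo2009optimization}), gives
\[
\inf_{\flow(0)=\base,\,\flow(1)=\joint}\actd(\flow,\m)=\sup_{\lm\in C^1:\,\partial_t\lm+\cost^*(\nabla\lm)\le 0}\Bigl\{\int\lm(1,\cdot)\diff\joint-\int\lm(0,\cdot)\diff\base\Bigr\}.
\]
Thus the primal equals $\inf_\joint\sup_\lm[\,\cdot\,]$.

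\textbf{Minimax swap and MMOT static dual.} The next step is to interchange the $\inf_\joint$ and $\sup_\lm$ via Sion's minimax theorem: $\Pi(\marg_1,\ldots,\marg_{\kk})$ is weakly compact and convex, the admissible set for $\lm$ is convex, and the objective is linear in $\joint$ and concave (indeed linear) in $\lm$; weak upper semicontinuity in $\joint$ holds since $\lm(1,\cdot)$ is continuous on $\domainot^{\kk}$. After swapping, the inner infimum over $\joint\in\Pi$ with fixed continuous cost $\lm(1,\cdot)$ is the static multi-marginal Kantorovich problem whose dual (Definition \ref{def:dual_static_main}) is precisely $\sup_{\{\lm_l\}:\,\sum_l\lm_l(x_l)\le\lm(1,x)}\sum_l\int\lm_l\diff\marg_l$. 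Substituting recovers the right-hand side of \eqref{eq:dual_thm}, closing the chain of equalities. The main obstacle is the minimax step together with the verification that the classical Benamou--Brenier duality indeed applies to general convex costs under the stated growth condition; a backup strategy, should the minimax step prove awkward due to the non-compactness of the class of admissible $\lm$, would be a direct application of the Fenchel--Rockafellar theorem, computing the Legendre conjugate of the perspective functional $(\flow,\m)\mapsto\int\cost(\diff\m/\diff\flow)\diff\flow\diff t$ and of the indicator of $\CC$ paired against $(\lm,\{\lm_l\})$ via the continuity equation and boundary/marginal constraints, then using the $\base$-translation-invariance of $\cost$ to handle the non-standard boundary term at $t=1$.
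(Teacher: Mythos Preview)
Your weak duality argument is exactly the paper's. For strong duality, however, the paper takes a completely different and much more direct route: it exhibits an explicit near-optimizing sequence by setting $\lm^\epsilon(t,x):=(t+\epsilon)\cost\bigl(x/(t+\epsilon)\bigr)$ and taking $\{\lm_l^\epsilon\}$ to be optimizers of the dual static multi-marginal problem with the perturbed cost $x\mapsto(1+\epsilon)\cost\bigl(x/(1+\epsilon)\bigr)$. One checks by hand that $\lm^\epsilon\in C^1$ (this is where the $C^1$ hypothesis on $\cost$ enters), that $\partial_t\lm^\epsilon+\cost^*(\nabla\lm^\epsilon)=0$ identically, and that $\sum_l\lm_l^\epsilon(x_l)\le\lm^\epsilon(1,x)$. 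The growth hypothesis is used precisely to show $\int\lm^\epsilon(0,\cdot)\diff\base=\epsilon\int\cost(x/\epsilon)\diff\base(x)\to 0$, while the $\base$-translation-invariance enters through Theorem~\ref{thm:static_dynamic_equiv}, which identifies the limiting dual value $\lim_\epsilon\inf_\joint\acts^\epsilon(\joint)=\inf_\joint\acts(\joint)$ with the dynamic infimum.

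Your reduction strategy has a genuine gap at the step you call ``classical two-marginal Benamou--Brenier duality.'' The references you cite establish the equivalence of the dynamic problem with a \emph{static} transport cost, not the Hamilton--Jacobi dual formula you write down; and even where that dual formula is proved in the literature, the optimal potential furnished by the Hopf--Lax semigroup is generically only Lipschitz, not $C^1$, so it does not live in the class $\ID$. Regularizing a Lipschitz subsolution so as to remain a $C^1$ subsolution while controlling the boundary values is exactly the work that the paper's $(t+\epsilon)$-construction carries out explicitly---so your black box in step (i) is hiding the heart of the proof. Relatedly, your argument never invokes $\base$-translation-invariance and gives no account of where the growth condition is actually used; since both hypotheses are essential in the paper's proof, you would need to clarify whether they are being smuggled into step (i) or whether you are implicitly claiming a stronger theorem. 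The Sion swap you flag as a worry is, by comparison, a secondary issue.
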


\begin{remark}[Attainment of supremum]
\label{rem:sup_attained}
We can show that, formally, the supremum on the right-hand side of \eqref{eq:dual_thm} is attained if we drop some of the regularity assumptions on $\lm$. To construct such $\lm,\{\lm_l\}_{l\in [\kk]}$ we will use the notation $\phi^{\cost}(x):=\inf_{z\in \domainot^{\kk}}\{-\phi(z)+\cost(z-x)\}$ for $\phi:\domainot^{\kk}\to \R$. Let $\{\lm_l\}_{l\in [\kk]}$ be optimizers of the dual static  multi-marginal optimal transport problem of Definition \ref{def:dual_static_main}, and define $\lm$ by 
\begin{equation}
	\label{eq:HL_formula}
	\lm(t,y):=\inf_{x\in \domainot^{\kk}}\left\{\lm(0,x)+\frac{1}{t}\cost\left(\frac{y-x}{t}\right)\right\},
\end{equation}
with $\lm(0,x):=-\left(\sum_{l=1}^{\kk}\lm_l\right)^{\cost}(x)$. Let us first show that $\lm(1,\cdot)\ge \sum_{l=1}^{\kk}\lm_l$. Indeed, given fixed $x$ we have, by definition, 
\[
\lm(0,x)=-\left(\sum_{l=1}^{\kk}\lm_l\right)^{\cost}(x)\ge \sum_{l=1}^{\kk}\lm_l(y_l)-\cost(y-x)\qquad\textnormal{for all $y$,}
\]
so, by \eqref{eq:HL_formula}, 
\[
\lm(1,y)\ge \sum_{l=1}^{\kk}\lm_l(y_l)-\cost(y-x)+\cost(y-x)= \sum_{l=1}^{\kk}\lm_l(y_l).
\]
Next, $\lm$ satisfies $\partial_t\lm(t,x)+\cost^*(\nabla \lm(t,x))\le 0$ since \eqref{eq:HL_formula} is the Hopf-Lax formula. Finally, $\lm(0,x)=0$ for $x\in\supp(\base)$, as $\cost$ is $\base$-translation-invariant and as $\{\lm_l\}_{l=1}^{\kk}$ are optimizers in the dual static multi-marginal problem, so the right-hand side of \eqref{eq:dual_thm} is equal to the left-hand side of \eqref{eq:dual_thm} by the optimality of $\{\lm_l\}_{l=1}^{\kk}$. However, $\lm$ is not a $C^1$ function in $t$ at $t=0$, so this $\lm$ is not in $\ID(\marg_1,\ldots,\marg_{\kk})$. Let us also remark that the above construction can be used to give a formal proof that the dual of the dynamical multi-marginal optimal transport problem is equivalent to the dual of the static multi-marginal optimal transport problem.
\end{remark}

\subsection{Heuristic argument for Theorem \ref{thm:dual_dynamical}}  

We begin by giving a formal proof of the dynamical duality, which also shows how one would guess the form of the dual problem to begin with. Define the Lagrangian
\begin{equation}
\label{eq:Lagrangian}
\lag(\flow,\m,\lm):=\actd(\flow,\m)+\int_0^1\int_{\domainot^{\kk}}\lm(t,x)[\partial_t\flow(t,x)+\nabla\m(t, x)]\diff x\diff t.
\end{equation}
Assuming strong duality,
\begin{align*}
\inf_{\{(\flow,\m)\,|\,\partial_t\flow+\nabla\m=0\}}\actd(\flow,\m)=\inf_{\flow,\m}\sup_{\lm}\lag(\flow,\m,\lm)=\sup_{\lm}\inf_{\flow,\m}\lag(\flow,\m,\lm),
\end{align*}
where the infimum is over $\flow$ satisfying $\flow(0,\cdot)=\base$ and $\flow_l(1,\cdot)=\marg_l$ for $l\in [\kk]$. Thus, our goal is to compute $\inf_{(\flow,\m)}\lag(\flow,\m,\lm)$ under the boundary conditions on $\flow$. By integration by parts,
\begin{equation}
\label{eq:Lagrangian_ibp_sep}
\begin{split}
\lag(\flow,\m,\lm)&=\int_0^1\int_{\domainot^{\kk}}\left[\cost\left(\frac{\m(t, x)}{ \flow(t,x)}\right) \flow(t,x)-\partial_t\lm(t,x)\flow(t,x)-\langle\nabla\lm(t,x),\m(t, x)\rangle\right]\diff x\diff t\\
&+\int_{\domainot^{\kk}}\lm(1,x)\flow(1, x)\diff x - \int_{\domainot^{\kk}}\lm(0,x) \base(x)\diff x\\
&=\int_0^1\int_{\domainot^{\kk}}\left[\cost(\vel(t,x))-\partial_t\lm(t,x)-\langle\nabla\lm(t,x),\vel(t,x)\rangle\right]\flow(t,x)\diff x\diff t\\
&+\int_{\domainot^{\kk}}\lm(1,x)\flow(1,x) \diff  x- \int_{\domainot^{\kk}}\lm(0,x) \base(x)\diff x.
\end{split}
\end{equation}
For fixed $\lm$ and $\flow$, $\lag(\flow,\m,\lm)$ is minimized by $\m^*$ where $\m^*=\vel^*\flow$ with  $\vel^*$ a minimizer of
\begin{align}
\label{eq:cost_legendre_transform}
\min_{\vel(t,x)}\left[\cost(\vel(t,x))-\langle\nabla\lm(t,x),\vel(t,x)\rangle\right]=-\cost^*(\nabla\lm(t,x)).
\end{align}
Plugging this $\m^*$ into the Lagrangian we get 
\begin{equation}
\label{eq:Lagrangian_ibp_sep_m_star}
\begin{split}
\lag(\flow,\m^*,\lm)&=\int_0^1\int_{\domainot^{\kk}}\left[-\cost^*(\nabla\lm(t,x))-\partial_t\lm(t,x)\right]\flow(t,x)\diff  x\diff t\\
&+\int_{\domainot^{\kk}}\lm(1,x)\flow(1,x) \diff  x- \int_{\domainot^{\kk}}\lm(0,x) \base( x) \diff x.
\end{split}
\end{equation}
Next we minimize $\lag(\flow,\m^*,\lm)$ over all nonnegative measures $\flow$ with marginals $\marg_1,\ldots,\marg_{\kk}$. To avoid infinity we must have
\begin{equation}
\label{eq:HJ}
\partial_t\lm(t,x)+\cost^*(\nabla\lm(t,x))\le 0.  
\end{equation}
Since the last term on the right-hand side of \eqref{eq:Lagrangian_ibp_sep_m_star} is independent of $\flow$, it follows that minimizing $\lag(\flow,\m^*,\lm)$ over $\flow$ is equivalent to minimizing  $\int_{\domainot^{\kk}}\lm(1,x)\diff\flow(1,x)$ over $\flow$ with marginals $\marg_1,\ldots,\marg_{\kk}$. By duality, this minimum is the same as $\max_{\lm_1,\ldots, \lm_{\kk}}\sum_{l=1}^{\kk}\int_{\domainot}\lm_l(x_l)\diff \marg_l(x_l)$, where $\{\lm_l\}_{l=1}^{\kk}$ satisfy  $\sum_{l=1}^{\kk}\lm(x_l)\le \lm(1,x)$. Finally, we take the maximum over all possible $\lm$ to conclude the argument.

\subsection{Proof of Theorem \ref{thm:dual_dynamical}}
We now turn to the rigorous proof of Theorem \ref{thm:dual_dynamical}. We start by showing that there is an inequality in \eqref{eq:dual_thm}, 
\begin{equation}
\label{eq:dual_thm_neq}
\inf_{(\flow,\m)\in \CC(\base,\marg_1,\ldots,\marg_{\kk})}\actd(\flow,\m)\ge\sup_{(\lm,\{\lm_l\}_{l=1}^{\kk})\in \ID(\marg_1,\ldots,\marg_{\kk})}\left\{\sum_{l=1}^{\kk}\int_{\domainot}\lm_l(x_l)\diff\marg_l( x_l) - \int_{\domainot^{\kk}}\lm(0,x)\diff \base(x)\right\}.
\end{equation} 
Fix any $(\lm,\{\lm_l\}_{l=1}^{\kk})\in \ID(\marg_1,\ldots,\marg_{\kk})$ and $(\flow,\m)\in \CC(\base,\marg_1,\ldots,\marg_{\kk})$. Then, using the definitions of $\ID(\marg_1,\ldots,\marg_{\kk})$ and $\CC(\base,\marg_1,\ldots,\marg_{\kk})$,
\begin{align*}
&\sum_{l=1}^{\kk}\int_{\domainot}\lm_l(x_l)\diff \marg_l(x_l) - \int_{\domainot^{\kk}}\lm(0,x)\diff\base(x)=\int_{\domainot^{\kk}}\sum_{l=1}^{\kk}\lm_l(x_l)\diff \flow(1,x) - \int_{\domainot^{\kk}}\lm(0,x)\diff\flow(0, x)\\
&\le \int_{\domainot^{\kk}}\lm(1,x)\diff\flow(1, x) - \int_{\domainot^{\kk}}\lm(0,x)\diff\flow(0, x)=\int_0^1\int_{\domainot^{\kk}}\partial_t\lm(t,x)\diff \flow(t,x)\diff t+\int_0^1\int_{\domainot^{\kk}}\langle\nabla\lm(t,x),\diff \m(t,x)\rangle \diff t\\
&\le \int_0^1\int_{\domainot^{\kk}}\left[\nabla\lm(t,x),\vel(t, x)\rangle-\cost^*(\nabla \lm(t,x))\right]\diff\flow(t, x)\diff t\le \int_0^1\int_{\domainot^{\kk}}\cost(\vel(t, x))\diff\flow(t,x)\diff t=\actd(\flow,\m).
\end{align*}
Taking the supremum over $(\lm,\{\lm_l\}_{l=1}^{\kk})\in \ID(\marg_1,\ldots,\marg_{\kk})$, and the infimum over $(\flow,\m)\in \CC(\base,\marg_1,\ldots,\marg_{\kk})$, yields \eqref{eq:dual_thm_neq}.

Next we construct a sequence $\{(\lm^{\epsilon},\{\lm_l^{\epsilon}\}_{l=1}^{\kk})\}_{\epsilon \ge 0}\in \ID(\marg_1,\ldots,\marg_{\kk})$ such that 
\begin{equation}
\label{eq:dual_thm_eps}
 \lim_{\epsilon\to 0}\left\{\sum_{l=1}^{\kk}\int_{\domainot}\lm_l^{\epsilon}(x_l)\diff\marg_l( x_l) - \int_{\domainot^{\kk}}\lm^{\epsilon}(0,x)\diff \base(x)\right\}=\inf_{(\flow,\m)\in \CC(\base,\marg_1,\ldots,\marg_{\kk})}\actd(\flow,\m),
\end{equation} 
which proves \eqref{eq:dual_thm}. Fix $\epsilon>0$, and let $\{\lm_l^{\epsilon}\}_{l=1}^{\kk}$ be optimal functions in the dual static multi-marginal  optimal transport problem of Definition \ref{def:dual_static_main} with cost $\R^{\kk\dd}\ni x\mapsto (1+\epsilon)\cost\left(\frac{x}{1+\epsilon}\right)$. Note that $\lm_l^{\epsilon}$ is $\marg_l$-integrable for each $l\in [\kk]$.  Let $\lm^{\epsilon}:[0,1]\times\domainot^{\kk}\to \R$ be given by $\lm^{\epsilon}(t,x):=(t+\epsilon)\cost\left(\frac{x}{t+\epsilon}\right)$. Since $\cost$ is continuously differentiable we have that $\lm^{\epsilon}\in  C^1([0,1]\times\domainot^{\kk};\R)$. In addition, since $\{\lm_l^{\epsilon}\}_{l=1}^{\kk}$  are optimizers for the dual static multi-marginal optimal transport problem with cost $(1+\epsilon)\cost\left(\frac{x}{1+\epsilon}\right)$, we have
\begin{equation}
\label{eq:lm_eps_smaller_lm}
\sum_{l=1}^{\kk}\lm_l^{\epsilon}(x_l)\le (1+\epsilon)\cost\left(\frac{x}{1+\epsilon}\right)=\lm^{\epsilon}(1,x)\qquad\text{for}\quad \text{$(\otimes_{l=1}^{\kk}\marg_l)$-almost-all}\qquad (x_1,\ldots,x_{\kk})\in \domainot^{\kk}.
\end{equation}
Hence, to conclude that $\{(\lm^{\epsilon},\{\lm_l^{\epsilon}\}_{l=1}^{\kk})\}_{\epsilon \ge 0}\in \ID(\marg_1,\ldots,\marg_{\kk})$ it remains to show that $\partial_t\lm^{\epsilon}+\cost^*(\nabla\lm^{\epsilon})\le 0$. Indeed,
\begin{align}
\label{eq:HJ_eps}
\partial_t\lm^{\epsilon}(t,x)=\cost\left(\frac{x}{t+\epsilon}\right)-\left\langle \nabla \cost\left(\frac{x}{t+\epsilon}\right),\frac{x}{t+\epsilon}\right\rangle=-\cost^*\left(\nabla\cost \left(\frac{x}{t+\epsilon}\right)\right)=-\cost^*(\nabla\lm^{\epsilon} (t,x)).
\end{align}
Next we establish \eqref{eq:dual_thm_eps}. Since $\{\lm_l^{\epsilon}\}_{l=1}^{\kk}$  are optimizers for the dual  static multi-marginal problem with cost $(1+\epsilon)\cost\left(\frac{x}{1+\epsilon}\right)$, duality gives
\begin{align*}
\sum_{l=1}^{\kk}\int_{\domainot}\lm_l^{\epsilon}(x_l)\diff\marg_l( x_l) = \inf_{\joint \in \Pi(\marg_1,\ldots,\marg_{\kk})}\acts^{\epsilon}(\joint),
\end{align*}
where $\acts^{\epsilon}(\joint):=\int_{\domainot^{\kk}}(1+\epsilon)\cost\left(\frac{x}{1+\epsilon}\right)\diff \joint( x)$. Hence, by the definition of $\lm^{\epsilon}$, 
\begin{align*}
&\sum_{l=1}^{\kk}\int_{\domainot}\lm_l^{\epsilon}(x_l)\diff\marg_l( x_l) - \int_{\domainot^{\kk}}\lm^{\epsilon}(0,x)\diff\base( x)=\inf_{\joint \in \Pi(\marg_1,\ldots,\marg_{\kk})}\acts^{\epsilon}(\joint)- \int_{\domainot^{\kk}}\epsilon\cost\left(\frac{x}{\epsilon}\right)\diff \base(x).
\end{align*}
By assumption, $\lim_{\epsilon\to 0}\int_{\domainot^{\kk}}\epsilon\cost\left(\frac{x}{\epsilon}\right)\diff \base(x)=0$, so using \eqref{eq:static_dynamic_equiv}, we will establish \eqref{eq:dual_thm_eps} once the following claim is verified (see \cite[Theorem 5.20]{villani2008optimal} for an alternative argument in the two-marginal case). 
\begin{claim}
\[
\lim_{\epsilon\to 0}\inf_{\joint \in \Pi(\marg_1,\ldots,\marg_{\kk})}\acts^{\epsilon}(\joint)=\inf_{\joint \in \Pi(\marg_1,\ldots,\marg_{\kk})}\acts(\joint).
\]
\end{claim}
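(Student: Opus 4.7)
The plan is to show the stronger statement that the functionals $\acts^{\epsilon}$ converge to $\acts$ uniformly over $\Pi(\marg_1,\ldots,\marg_{\kk})$, from which convergence of the infima is immediate. The whole argument rests on the fact that convex real-valued functions on $\R^{\kk\dd}$ are continuous, hence uniformly continuous on compacts.

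First I would set $f_{\epsilon}(x):=(1+\epsilon)\cost\bigl(\frac{x}{1+\epsilon}\bigr)$ and show that $f_{\epsilon}\to \cost$ uniformly on $\domainot^{\kk}$ as $\epsilon\downarrow 0$. Since $\domainot^{\kk}$ is compact and $\R^{\kk\dd}\ni x\mapsto \cost(x)$ is convex and finite-valued, it is continuous, hence bounded and uniformly continuous on the enlarged compact $K:=\bigcup_{\epsilon\in [0,1]}\{x/(1+\epsilon):x\in \domainot^{\kk}\}$. For $x\in \domainot^{\kk}$ I would estimate
\begin{equation*}
|f_{\epsilon}(x)-\cost(x)|\le (1+\epsilon)\bigl|\cost\bigl(\tfrac{x}{1+\epsilon}\bigr)-\cost(x)\bigr|+\epsilon\,|\cost(x)|,
\end{equation*}
and use the uniform continuity modulus of $\cost$ on $K$, together with the boundedness of $\cost$ on $\domainot^{\kk}$, to conclude that $\sup_{x\in \domainot^{\kk}}|f_{\epsilon}(x)-\cost(x)|\to 0$.

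Next I would translate this into uniform convergence of the functionals. For any $\joint\in \Pi(\marg_1,\ldots,\marg_{\kk})$,
\begin{equation*}
|\acts^{\epsilon}(\joint)-\acts(\joint)|\le \int_{\domainot^{\kk}}|f_{\epsilon}(x)-\cost(x)|\diff\joint(x)\le \sup_{x\in \domainot^{\kk}}|f_{\epsilon}(x)-\cost(x)|,
\end{equation*}
so $\sup_{\joint\in \Pi(\marg_1,\ldots,\marg_{\kk})}|\acts^{\epsilon}(\joint)-\acts(\joint)|\to 0$. The standard inequality $|\inf \acts^{\epsilon}-\inf \acts|\le \sup_{\joint}|\acts^{\epsilon}(\joint)-\acts(\joint)|$ then yields the claim.

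I do not expect any serious obstacle: the only delicate point is justifying uniform convergence on $\domainot^{\kk}$, which is handled by the automatic continuity of a finite convex function on $\R^{\kk\dd}$ and the compactness of $\domainot^{\kk}$. No further compactness or tightness argument on $\Pi(\marg_1,\ldots,\marg_{\kk})$ is needed, precisely because the convergence $f_{\epsilon}\to \cost$ is uniform on the underlying space.
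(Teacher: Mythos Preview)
Your argument is correct and in fact cleaner than the paper's. Both approaches start from the same pointwise estimate on $f_{\epsilon}-\cost$ on the compact $\domainot^{\kk}$, but the paper proceeds via a $\Gamma$-convergence style argument: it shows $\acts^{\epsilon}(\joint^{\epsilon})\to\acts(\joint)$ whenever $\joint^{\epsilon}\to\joint$ weakly, then picks minimizers $\joint^{\epsilon}$, extracts a weakly convergent subsequence using compactness of $\Pi(\marg_1,\ldots,\marg_{\kk})$, and finally passes to the limit in the optimality inequality $\acts^{\epsilon}(\joint^{\epsilon})\le\acts^{\epsilon}(\tilde\joint)$ to identify the limit as a minimizer of $\acts$. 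Your route instead upgrades the pointwise convergence to uniform convergence of $f_{\epsilon}\to\cost$ on $\domainot^{\kk}$, which immediately gives $\sup_{\joint}|\acts^{\epsilon}(\joint)-\acts(\joint)|\to 0$ and hence convergence of the infima without ever touching compactness of $\Pi(\marg_1,\ldots,\marg_{\kk})$ or extracting subsequences. A small bonus: you use only continuity of $\cost$ (automatic from convexity), whereas the paper's estimate invokes $\nabla\cost$ and thus the $C^{1}$ hypothesis. The paper's longer argument does yield the extra information that accumulation points of $\epsilon$-minimizers are minimizers of $\acts$, but that is not needed for the claim as stated.
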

\begin{proof}
We begin by showing that if $\{\joint^{\epsilon}\}$ is a sequence in $\Pi(\marg_1,\ldots,\marg_{\kk})$ which converges to some $\joint\in\Pi(\marg_1,\ldots,\marg_{\kk})$, then $\lim_{\epsilon\to 0}\acts^{\epsilon}(\joint^{\epsilon})=\acts(\joint)$. Indeed, denoting $\cost^{\epsilon}(x):=(1+\epsilon)\cost\left(\frac{x}{1+\epsilon}\right)$, we have
\begin{align*}
\acts^{\epsilon}(\joint^{\epsilon})=\int_{\domainot^{\kk}}\cost^{\epsilon}(x) \diff\joint^{\epsilon}( x)&=\int_{\domainot^{\kk}}\cost(x) \diff\joint( x)\\
&+\int_{\domainot^{\kk}}[\cost^{\epsilon}(x)-\cost(x)]\diff \joint^{\epsilon}(x)+\left[\int_{\domainot^{\kk}}\cost(x)\diff\joint^{\epsilon}(x)-\int_{\domainot^{\kk}}\cost(x) \diff \joint(x)\right].
\end{align*}
Let us show that the last two terms converge to zero as $\epsilon\to 0$. For the first term we have
\begin{align*} 
&\left|\int_{\domainot^{\kk}}[\cost^{\epsilon}(x)-\cost(x)]\diff\joint^{\epsilon}(  x)\right|\le\int_{\domainot^{\kk}}\left|\cost\left(\frac{x}{1+\epsilon}\right)-\cost(x)\right|\diff \joint^{\epsilon}( x)+\epsilon \left|\int_{\domainot^{\kk}}\cost\left(\frac{x}{1+\epsilon}\right)\diff \joint^{\epsilon}( x) \right|\\
&\le\sup_{y\in \domainot_{\epsilon}^{\kk}}|\nabla \cost(y)|\frac{\epsilon}{1+\epsilon}\int_{\domainot^{\kk}}|x|\diff \joint^{\epsilon}( x)+\epsilon\sup_{y\in \domainot_{\epsilon}^{\kk}}\cost(y),
\end{align*}
where $\domainot^{\kk}_{\epsilon}:=\cup_{r\in[\frac{1}{1+\epsilon},1]}\,r\domainot^{\kk}$. The terms $\sup_{y\in \domainot_{\epsilon}^{\kk}}|\nabla \cost(y)|$, $\int_{\domainot^{\kk}}|x| \diff \joint^{\epsilon}(x)$, and $\sup_{y\in \domainot_{\epsilon}^{\kk}}\cost(y)$ are finite since $\domainot_{\epsilon}^{\kk}$ is compact, and as $\cost$ is continuously differentiable. Hence, $|\int_{\domainot^{\kk}}[\cost^{\epsilon}(x)-\cost(x)] \diff\joint^{\epsilon}( x)|\to 0$ as $\epsilon\to 0$.  The second term also converges to 0, $[\int_{\domainot^{\kk}}\cost(x)\diff \joint^{\epsilon}(x)-\int_{\domainot^{\kk}}\cost(x)\diff\joint(  x)]\to 0$ as  $\epsilon\to 0$, since $\joint^{\epsilon}\to \joint$ weakly, and $\cost$ is continuous and bounded (as $\domainot^{\kk}$ is compact). To conclude, we have shown that $\lim_{\epsilon\to 0}\acts^{\epsilon}(\joint^{\epsilon})=\acts(\joint)$ if $\lim_{\epsilon\to 0}\joint^{\epsilon}=\joint$ weakly. 

For each $\epsilon$ let $\joint^{\epsilon}\in\argmin_{\joint \in \Pi(\marg_1,\ldots,\marg_{\kk})}\acts^{\epsilon}(\joint)$, and note that, by the compactness of $\domainot^{\kk}$,  we have a subsequence, still denoted $\{\joint^{\epsilon}\}$, which converges weakly to some $\joint \in  \Pi(\marg_1,\ldots,\marg_{\kk})$. Using the above result it suffices to show that $\joint\in \argmin_{\tilde\joint \in \Pi(\marg_1,\ldots,\marg_{\kk})}\acts(\tilde\joint)$, since we then have 
\begin{equation}
\label{eq:claim_lim}
\lim_{\epsilon\to 0}\inf_{\joint \in \Pi(\marg_1,\ldots,\marg_{\kk})}\acts^{\epsilon}(\joint)=\lim_{\epsilon\to 0}\acts^{\epsilon}(\joint^{\epsilon})=\acts(\joint)=\inf_{\tilde\joint \in \Pi(\marg_1,\ldots,\marg_{\kk})}\acts(\tilde\joint),
\end{equation}
establishing the claim. To verify the last equality in \eqref{eq:claim_lim} note that by definition
\begin{equation}
\label{eq:claim_inq}
\acts^{\epsilon}(\joint^{\epsilon})\le \acts^{\epsilon}(\tilde\joint)\qquad\text{for all}\qquad \tilde\joint\in \Pi(\marg_1,\ldots,\marg_{\kk}).
\end{equation}
We have shown that the left-hand side of \eqref{eq:claim_inq} converges to $\acts(\joint)$, and, by the same argument, the right-hand side of \eqref{eq:claim_inq} converges to $ \acts(\tilde\joint)$. In other words, 
\begin{equation}
\label{eq:claim_inq2}
\acts(\joint)\le \acts(\tilde\joint)\qquad\text{for all}\qquad \tilde\joint\in \Pi(\marg_1,\ldots,\marg_{\kk}),
\end{equation}
which shows that $\joint\in \argmin_{\tilde\joint \in \Pi(\marg_1,\ldots,\marg_{\kk})}\acts(\tilde\joint)$.
\end{proof}

\section{Proximal splitting methods}
\label{sec:proximal}
\subsection{The discretized dynamical multi-marginal optimal transport problem}
\label{subsec:disc_mmot}
In order to numerically solve the dynamical multi-marginal optimal transport problem we resort to discretization. We will use the specific discretization of \cite{baradat2021regularized}, which in turns follows  \cite{papadakis2014optimal}, to obtain a finite-dimensional convex optimization problem. The objective of this optimization problem is convex, but non-smooth, which excludes methods such as gradient descent. Following \cite{benamou2000computational, papadakis2014optimal, baradat2021regularized} we will use proximal splitting, specifically  the primal-dual method  \cite[\S 4.5]{papadakis2014optimal} (but other proximal splitting methods are applicable as well). This is not the only approach, of course, and other methods can be found in \cite{papadakis2014optimal}.

To simplify the exposition we will restrict to $\dd=1$, but the method we describe in this section, and our numerical implementation, generalize to arbitrary dimensions (but can be painfully slow as the dimension grows). Another reason to focus on $\dd=1$ is that in this setting we can compare our numerical solutions to the known analytic ones; see Section \ref{subsec:numerics}. Note that although $\dd=1$, the overall dimension will be $\dd\kk=\kk$ where $\kk$ is the number of marginals. As a concrete example we take the domain of the marginals to be the one-dimensional torus\footnote{Our theoretical results were formulated for domains with boundary. The choice to use a periodic domain in the discretized problem is simply for convenience and should not be essential.} $\domainot = \mathbb T$, i.e., the interval $[0,1]$  with periodic boundary conditions. We denote by $N_t$ and $N_x$ the number of discretization points of $[0,1]$ and $\mathbb T$, respectively. As common in discretizations of the continuity equation, we use centered and staggered grids for time and space. We begin with the one-dimensional grids. The temporal centered and staggered  grids are
\begin{equation}
\label{eq:gridt_def}
\gridtc:=\left\{\frac{i}{N_t}+\frac{1}{2 N_t}~: ~0\le i\le N_t-1\right\}\subset [0,1], \qquad \gridts:=\left\{\frac{i}{N_t}~: ~0\le i\le N_t\right\}\subset[0,1],
\end{equation}
respectively, while the spatial centered and staggered  grids are
\begin{equation}
\label{eq:gridx_def}
\gridxc:=\left\{\frac{j}{N_x}~: ~0\le j\le N_x-1\right\}\subset \mathbb T, \qquad \gridxs:=\left\{\frac{j}{N_x}+\frac{1}{2N_x}~: ~0\le j\le N_x-1\right\}\subset \mathbb T,
\end{equation}
respectively. We will have two sets of variables, \emph{centered variables}, which  live on the centered grids, and \emph{staggered variables}, which live on a mix of the centered and staggered grids. The set of centered variables is defined as
\begin{equation}
\label{eq:fgridc_def}
\fgridc :=\left\{\Uc := (\fc, \mc)\in \R^{\gridtc\times (\gridxc)^{\kk}}\times (\R^{\kk})^{\gridtc\times (\gridxc)^{\kk}}\right\},
\end{equation}
where $\fc, \mc$ stand for the temporal and spatial centered discretizations of $\pi,\m$, and the values of $\fc, \mc$ at a point $(t,x)\in \gridtc\times (\gridxc)^{\kk}$,  denoted  $\fc(t,x)$ and $\mc(t,x)$, stand for the total amount of mass of a box centered at $(t,x)$. The objective function of our discretized dynamical multi-marginal optimal transport problem is defined on the centered variables as $\actdd:\fgridc\to \R\cup\{+\infty\}$ given by
\begin{equation}
\label{eq:obj}
\actdd(\Uc)=\actdd(\fc,\mc):=\sum_{(t,x)\in\gridtc\times (\gridxc)^{\kk}}\cost\left(\frac{\mc(t,x)}{\fc(t,x)}\right)\fc(t,x).
\end{equation}
The discretization of the continuity equation constraint is done on the set of staggered variables, which is defined as
\begin{equation}
\label{eq:fgrids_def}
\fgrids :=\left\{\Us := (\fs, \ms)\in \R^{\gridts\times (\gridxc)^{\kk}}\times (\R^{\kk})^{\gridtc\times (\gridxs)^{\kk}}\right\},
\end{equation}
where $\fs, \ms$ play the analogous roles of $\fc,\mc$, respectively. 
The continuity equation constraint becomes, for all $(t,x)\in \gridtc\times (\gridxc)^{\kk}$,
\begin{equation}
\label{eq:cont_eq_disc}
\frac{1}{N_t}\left[\fs\left(t+\frac{1}{2N_t}, x\right)-\fs\left(t-\frac{1}{2N_t}, x\right)\right]+\frac{1}{N_x}\sum_{l=1}^{\kk}\left[\ms\left(t,x+\frac{e_l}{2N_x}\right)-\ms\left(t, x-\frac{e_l}{2N_x}\right)\right]=0,
\end{equation}
where $\{e_l\}_{l=1}^{\kk}$ is the standard basis of $\R^{\kk}$. We denote the set of staggered variables satisfying the continuity equation as
\begin{equation}
\label{eq:cont_eq_disc_def}
\mathcal C\fgrids:=\{\Us\in \fgrids: \Us\textnormal{ satisfies \eqref{eq:cont_eq_disc} for all $(t,x)\in \gridtc\times (\gridxc)^{\kk}$}\}.
\end{equation}
In addition, we need to impose the boundary conditions so we set 
\begin{equation}
\label{eq:cont_eq_disc_plus_bd_def}
\mathcal C\fgrids(\marg_1,\ldots,\marg_{\kk}):=\left\{\Us\in \mathcal C\fgrids ~: \sum_{x_{\backslash l}\in (\gridxc)^{\kk-1}}\fs(1,(x_l, x_{\backslash l}))=\marg_l(x_l)\quad\textnormal{for all }x_l\in \gridxc\quad\textnormal{for all } l\in [\kk]\right\},
\end{equation}
where we write $x=(x_l, x_{\backslash l})$ for $x\in (\gridxc)^{\kk}$, with $x_l\in \gridxc$ standing for the $l$th variable, and $x_{\backslash l}\in (\gridxc)^{\kk-1}$ standing for the rest of the variables. 

Finally, the  sets of centered and staggered variables can be mapped onto each other via $\mathcal I:\fgrids\to \fgridc$ given by
\begin{equation}
\label{eq:avg_operator}
\mathcal I\begin{pmatrix}\fs \\ \ms\end{pmatrix}(t,x)=\frac{1}{2}\begin{pmatrix}\fs(t+\frac{1}{2N_t}, x)+\fs(t-\frac{1}{2N_t}, x)\\ \sum_{l=1}^{\kk}\left[\ms(t, x+\frac{e_l}{2N_x})+\ms(t, x-\frac{e_l}{2N_x})\right]\end{pmatrix},
\end{equation} 
where $\{e_l\}_{l=1}^{\kk}$ is again the standard basis of $\R^{\kk}$.

Putting everything together we get the discretized version of Definition \ref{def:dynamic_main}.
\begin{definition}[Discretized dynamical multi-marginal optimal transport problem]
\label{def:disc_DMMOT}
Let $\marg_1,\ldots\marg_{\kk}$ be probability measures on  $\gridxc$. The \emph{discretized dynamical multi-marginal optimal transport problem} is
\begin{align}
\begin{split}
\label{eq:disc_DMMOT}
&\min_{\Uc\in \fgridc} \,\actdd(\Uc)\quad\textnormal{subject to} \quad \Uc= \mathcal I (\Us)\quad\textnormal{where}\quad \Us\in \mathcal C\fgrids(\marg_1,\ldots\marg_{\kk}).
\end{split}
\end{align}

\end{definition}
As is clear from Definition \ref{def:disc_DMMOT}, the discretized dynamical multi-marginal optimal transport problem is a convex optimization problem, where the objective is convex and the constraints are linear. 

\subsection{The primal-dual proximal splitting method}

On a very high level, a proximal splitting approach towards convex optimization problems (such as the one in Definition \ref{def:disc_DMMOT}) alternates between steps which minimize the objective, and steps which enforce the constraints. The ``splitting" part refers to the various ways to perform these alternate steps, and the ``proximal" part refers to the method by which the objective is minimized and the constraints are enforced. Specifically, since in our setting the objective is non-smooth (at the origin), standard gradient descent methods are problematic, which necessitates the usage of proximal operators. In general, given a finite dimensional Hilbert space $\mathcal H$, and a convex lower semi-continuous  proper function $\mathrm F:\mathcal H\to \R\cup\{+\infty\}$, the \emph{proximal operator} of $\mathrm F$ is a function $\Prox_{\mathrm F}:\mathcal H\to \R$ given by
\begin{equation}
\label{eq:prox_def}
\Prox_{\mathrm F} (h):=\argmin_{\tilde h}\left\{\frac{1}{2}\|h-\tilde h\|_{\mathcal H}^2+\mathrm F(\tilde h)\right\}.
\end{equation}
One can check that, when $\mathrm F$ is differentiable, $\Prox_{\mathrm F} (h)$ returns an implicit Euler gradient descent step on $\mathrm F$ starting at $h$. The power of proximal operators is that $\Prox_{\mathrm F}$ is well-defined with no smoothness assumptions on $\mathrm F$, and can thus serve as a substitute for gradient descent. We will need the proximal operator of the Legendre-Fenchel transform of $\mathrm F$,
\begin{equation}
\label{eq:LF_def}
\mathrm F^*(g)=\max_{h\in\mathcal H}\left\{\langle h,g\rangle_{\mathcal H}-\mathrm F(h)\right\},
\end{equation}
which is given by Moreau’s identity, for every $\alpha>0$,
\begin{equation}
\label{eq:Moreau}
 \Prox_{\alpha \mathrm  F^*}(g)=g-\alpha  \Prox_{\frac{\mathrm F}{\alpha}}\left(\frac{g}{\alpha}\right) \quad\textnormal{for all}\quad  g\in\mathcal H.
\end{equation}
The \emph{primal-dual proximal splitting method} applies to problems of the form 
\begin{equation}
\label{eq:PD_problems}
\min_{h\in \mathcal H} \left\{\mathrm F(\mathcal K(h))+\mathrm G(h)\right\},
\end{equation}
where $\mathrm F:\tilde{\mathcal H}\to \R\cup\{+\infty\}$, $\mathrm G:\mathcal H\to \R\cup\{+\infty\}$ are convex lower semi-continuous  proper functions on finite-dimensional Hilbert spaces $\tilde{\mathcal H},\mathcal H$, and $\mathcal K:\mathcal H\to\tilde{\mathcal H}$ is a linear operator. Given positive parameters $\theta,\sigma,\tau$, and initialization $f^{(0)}\in \mathcal H$ and $g^{(0)}\in \tilde{\mathcal H}$, we solve for $\ell=0,1,\ldots$,
\begin{align}
\begin{split}
\label{eq:PD_iter}
&g^{(\ell+1)}=\Prox_{\sigma\mathrm F^*}(g^{(\ell)}+\sigma \mathcal K(f^{\ell})),\\
&h^{(\ell+1)}=\Prox_{\tau\mathrm G}(h^{(\ell)}-\tau\mathcal K^*(g^{(\ell+1)})),\\
&f^{(\ell+1)}=h^{(\ell+1)}+\theta(h^{(\ell+1)}-h^{(\ell)}).
\end{split}
\end{align}
It is known \cite{chambolle2011first}, \cite[\S 4.5]{papadakis2014optimal}  that if $0\le \theta\le 1$, and $\sigma\tau \|\mathcal K\|_{\textnormal{op}}<1$, then $h^{(\ell)}\to h^*$ where $h^*$ is a minimizer of \eqref{eq:PD_problems}. 

To apply the primal-dual method in our setting we take 
\begin{equation}
\label{eq:PD_DMMOT}
\tilde{\mathcal H}=\fgridc,\qquad \mathcal H=\fgrids,\qquad \mathrm F:=\actdd,\qquad \mathcal K:=\mathcal I,\qquad \mathrm G:=\iota_{\mathcal C\fgrids(\marg_1,\ldots\marg_{\kk})},
\end{equation}
where $\iota_{\mathcal C\fgrids(\marg_1,\ldots\marg_{\kk})}$ is the $0~/+\infty$ indicator function  of the set $\mathcal C\fgrids(\marg_1,\ldots\marg_{\kk})$.  In order to implement the primal-dual method we need to be able to compute the proximal operators of $\mathrm F$ and $\mathrm G$, which is problem-dependent. In the next section    we show how to do so for the quadratic cost using a slight modification of the formulation \eqref{eq:PD_DMMOT}.

\subsection{The quadratic  cost}
\label{subsec:GS}

Fix marginals $\marg_1,\ldots,\marg_{\kk}$ on $\domainot\subseteq \R^{\dd}$. The static multi-marginal optimal transport problem with the quadratic  cost  is defined as 
\begin{equation}
\label{eq:GS_mmot_cts}
\min_{\joint\in\Pi(\marg_1,\ldots,\marg_{\kk})}\int_{\domainot^{\kk}}\sum_{i,j=1}^{\kk}|x_i-x_j|^2\diff \joint(x_1,\ldots, x_{\kk}),
\end{equation}
where $x_l\in \R^{\dd}$ for $l\in [\kk]$, and $|\cdot|$ is the Euclidean norm on $\R^{\dd}$. The dynamical formulation is thus 
\begin{equation}
\label{eq:GS_mmot_cts}
\min_{(\flow,\m)\in \CC(\base,\marg_1,\ldots,\marg_{\kk})}\int_0^1\int_{\domainot^{\kk}}\sum_{i,j=1}^{\kk}\left|\frac{\diff \m_i(t,x)-\diff\m_j(t, x)}{\diff \flow(t, x)}\right|^2\diff \flow(t,x)\diff t,
\end{equation}
where $\m=(\m_1,\ldots,\m_{\kk})$. After discretizing \eqref{eq:GS_mmot_cts} as in Section \ref{subsec:disc_mmot}, we get the optimization problem 
\begin{equation}
\label{eq:GS_dmmot_cts}
\min_{\Uc\in \fgridc} \,\sum_{(t,x)\in\gridtc\times (\gridxc)^{\kk}}\left|\frac{\mc_i(t,x)-\mc_j(t,x)}{\fc(t,x)}\right|^2\fc(t,x)\quad\textnormal{s.t.} \quad \Uc= \mathcal I (\Us)\quad\textnormal{where}\quad \Us\in \mathcal C\fgrids(\marg_1,\ldots\marg_{\kk}),
\end{equation}
with  $\mc=(\mc_1,\ldots,\mc_{\kk})$. To put \eqref{eq:GS_dmmot_cts} into a form amenable to the primal-dual method let us define the following functions:
\begin{equation}
\label{eq:GS_def}
\mathcal S: (\R^{\kk})^{\gridtc\times (\gridxc)^{\kk}}\to  (\R^{\frac{\kk(\kk+1)}{2}})^{\gridtc\times (\gridxc)^{\kk}}\qquad\textnormal{by} \qquad \mathcal S(\m):=\left(\m_i-\m_j\right)_{1\le i<j\le \kk},
\end{equation}
and
\begin{equation}
\label{eq:GS_def_complete}
\bar{\mathcal S}: \R^{\gridtc\times (\gridxc)^{\kk}}\times (\R^{\kk})^{\gridtc\times (\gridxc)^{\kk}}\to \R^{\gridtc\times (\gridxc)^{\kk}}\times (\R^{\frac{\kk(\kk+1)}{2}})^{\gridtc\times (\gridxc)^{\kk}}\qquad \textnormal{by} \qquad \bar{\mathcal S}(\flow,\m):=(\flow, \mathcal S(\m)).
\end{equation}
We define $J: \R^{\gridtc\times (\gridxc)^{\kk}}\times (\R^{\frac{\kk(\kk+1)}{2}})^{\gridtc\times (\gridxc)^{\kk}}$ by
\begin{equation}
\label{eq:J_def}
\mathrm J(\flow,\m):=\begin{cases}
\frac{|\m|^2}{\pi} & \textnormal{if } \flow>0,\\
(0,0)  & \textnormal{if } \flow=0,\\
+\infty & \textnormal{otherwise}
\end{cases}.
\end{equation}
With the above notation we set
\begin{align}
\begin{split}
\label{eq:PD_DMMOT_GS}
\tilde{\mathcal H}=\R^{\gridtc\times (\gridxc)^{\kk}}\times (\R^{\frac{\kk(\kk+1)}{2}})^{\gridtc\times (\gridxc)^{\kk}},\quad \mathcal H=\fgrids,\quad \mathrm F:=\mathrm J,\quad \mathcal K:=\bar{\mathcal S}\circ \mathcal I,\quad \mathrm G:=\iota_{\mathcal C\fgrids(\marg_1,\ldots\marg_{\kk})},
\end{split}
\end{align}
so that \eqref{eq:GS_dmmot_cts} is equivalent to \eqref{eq:PD_DMMOT_GS}, which in turn is of the form \eqref{eq:PD_problems}. In order to apply the primal-dual method \eqref{eq:PD_iter} we need to be able to compute $\Prox_{J^*}$ and $\Prox_{\iota_{\mathcal C\fgrids(\marg_1,\ldots\marg_{\kk})}}$. In fact, due to  Moreau’s identity \eqref{eq:Moreau}, it suffices to compute $\Prox_{J}$ and $\Prox_{\iota_{\mathcal C\fgrids(\marg_1,\ldots\marg_{\kk})}}$. The proximal operator  of $J$ was computed in 
\cite[Proposition 1]{papadakis2014optimal}; see also \cite[
Lemma 6.71]{baradat2021regularized}. As for the proximal operator of $\iota_{\mathcal C\fgrids(\marg_1,\ldots\marg_{\kk})}$, since the set $\mathcal C\fgrids(\marg_1,\ldots\marg_{\kk})$ is convex, we have $\Prox_{\iota_{\mathcal C\fgrids(\marg_1,\ldots\marg_{\kk})}}=\Proj_{i\mathcal C\fgrids(\marg_1,\ldots\marg_{\kk})}$, i.e., the proximal operator of $\iota_{\mathcal C\fgrids(\marg_1,\ldots\marg_{\kk})}$ is the orthogonal projection onto  $\mathcal C\fgrids(\marg_1,\ldots\marg_{\kk})$. The set $\mathcal C\fgrids(\marg_1,\ldots\marg_{\kk})$ forms a subspace so the orthogonal projection onto it can be computed by solving a system of linear equations; see \cite[
\S 6.3.4]{baradat2021regularized}.

\subsubsection{Numerical experiments}
\label{subsec:numerics}
In our numerical experiments we focus on $\dd=1$. In dimension one the \emph{analytic} solution to the static  multi-marginal optimal transport for the  quadratic cost \eqref{eq:GS_mmot_cts} is known, so we can compare our numerical results to ground truth. Specifically, it follows from \cite[Theorem 2.1]{gangbo1998optimal} that the optimal solution to the static  multi-marginal optimal transport for the  quadratic cost (in any dimension $\dd$) is unique, and is of the form

\begin{equation}
\label{eq:optimal_coupling_GS}
\jointop=\law(X_1, \mapot^{1\to 2}(X_1),\ldots, \mapot^{1\to \kk}(X_1)),\qquad\qquad X_1\sim \marg_1,
\end{equation}
where $\{\mapot^{1\to l}\}_{l=2}^{\kk}$ are gradients of convex functions. When $\dd=1$ this means that $\{\mapot^{1\to l}\}_{l=2}^{\kk}$ are monotonic, and since they map $\marg_1$ to $\{\marg_l\}_{l=2}^{\kk}$, respectively, it follows that, when the cumulative distribution functions of the marginals are invertible, we must have
\begin{equation}
\label{eq:analytic_transport_GS}
\mapot^{1\to l}=F_l^{-1}\circ F_1,\qquad l=2,\ldots, \kk, 
\end{equation}
with $F_l$ the cumulative distribution function of $\marg_l$ for $l\in [\kk]$. Thus, our goal is to recover the transport maps $\{\mapot^{1\to l}\}_{l=2}^{\kk}$. 

The output of the primal-dual proximal splitting method is a temporal-spatial discretization of a flow $\flow$ (as well as momentum), which, if optimal,  satisfies, by Theorem \ref{thm:static_dynamic_equiv}(2),
\begin{equation}
\label{eq:what_we_want}
\flow(1,\cdot)=\jointop. 
\end{equation}
In turn,  the transport maps $\{\mapot^{1\to l}\}_{l=2}^{\kk}$ can be obtained from $\jointop$ by taking the conditional expectation \cite[\S 4]{chen2014numerical},
\begin{equation}
\label{eq:approx_transport_analytic}
\mapot^{1\to l}(x_1)=\mathbb E_{X\sim\jointop}[X_l|X_1=x_1].
\end{equation}
Therefore, we compare the transport maps computed via conditional expectation with respect to a discretization of $\flow(1,\cdot)$, against their analytic form \eqref{eq:analytic_transport_GS}; see below for the precise computation.

Let us now describe our numerical experiment. We take $\dd=1$, $\kk=3$, and $\domainot = \mathbb T$, i.e., $[0,1]$ with periodic boundary conditions. 
We take the the source measure $\base$ to be the uniform measure on $[0,1]$, and we take the following marginals (cf. Figure \ref{fig:3marginals}),
\begin{equation}
\label{eq:margs_numeric}
\marg_1 (x)= 
\begin{cases}
0 & \textnormal{if } x<0\\
\frac{\frac{\pi}{2}\sin(\pi x)+\delta}{1+\delta} & \textnormal{if } 0\le x\le 1\\
0 & \textnormal{if } x> 1\
\end{cases},\quad \marg_2 (x)= 
\begin{cases}
0 & \textnormal{if } x< 0\\
4x & \textnormal{if } 0\le x<\frac{1}{2}\\
4(1-x) & \textnormal{if } \frac{1}{2}\le x\le 1\\
0 & \textnormal{if } x> 1\
\end{cases},\quad \marg_3 (x)= 
\begin{cases}
0 & \textnormal{if } x< 0\\
8x & \textnormal{if } 0\le x<\frac{1}{4}\\
4-8x & \textnormal{if } \frac{1}{4}\le x< \frac{1}{2}\\
8x-4 & \textnormal{if } \frac{1}{2}\le x< \frac{3}{4}\\
8-8x & \textnormal{if } \frac{3}{4}\le x\le 1\\
0 & \textnormal{if } x> 1\
\end{cases},
\end{equation}
with $\delta:=0.2$ (we explain the reason for the $\delta$ term below). 

Given an output $(\fs,\ms)$ of the primal-method algorithm, and an index $l\in \{2,3\}$, we marginalize $\{\fs(1,x_1,x_2,x_3)\}_{x_1,x_2,x_3\in \gridxc}$ over the third index  to get the joint laws of $(X_1,X_2)$ and $(X_1,X_3)$, denoted respectively as $\{\fs_{12}(1,x_1,x_2)\}_{x_1,x_2\in \gridxc}$ and $\{\fs_{13}(1,x_1,x_3)\}_{x_1,x_3\in \gridxc}$. With a slight abuse of notation we denote by $\marg_1$ the discretization of the corresponding distribution in \eqref{eq:margs_numeric}, after normalizing it to sum up to 1. The approximation of the transport maps is then computed as
\begin{align}
\begin{split}
\label{eq:transport_approx}
&\mathrm T^{1\to 2}_{\textnormal{approx}} (x_1) :=\frac{1}{2\pi}\text{arg}\left( \sum_{x_2\in \gridxc} \exp(2\pi \mathrm{i}\, x_2) \frac{\fs_{12}(1,x_1,x_2)}{\marg_1(x_1)}\right),\\
&\mathrm T^{1\to 3}_{\textnormal{approx}}(x_1)  :=\frac{1}{2\pi}\text{arg}\left( \sum_{x_3\in \gridxc} \exp(2\pi \mathrm{i}\, x_3) \frac{\fs_{13}(1,x_1,x_3)}{\marg_1(x_1)}\right),
\end{split}
\end{align}
where $\mathrm i$ is the imaginary unit, and arg is angle of a complex number in radians. The reason for the computation in \eqref{eq:transport_approx} is that we map $[0,1]$ to the torus $\mathbb T$ by $r\mapsto e^{2\pi \mathrm{i} r}$, where we perform the conditional expectation, and then map back to $[0,1]$. The reason why we chose $\delta>0$ in the definition of $\marg_1$ in  \eqref{eq:margs_numeric} is to avoid division by zero in \eqref{eq:transport_approx}.

We take $N_t=N_x=10$ and run the algorithm for 5000 iterations, which takes a few minutes on a standard laptop. We take $\theta=1$ (but the algorithms works for other values of $\theta$), $\sigma=85$, and $\tau=0.1$. We observe that as iterations progress, $\fs$ becomes more negative, though refining the temporal grid seems to alleviate this issue somewhat. The code\footnote{Our code builds on the code of Hugo Lavenant  \url{https://github.com/HugoLav/RegUnOT}.} to run the primal-dual algorithm is available at 

\begin{center}
\url{https://github.com/yairshenfeld/DMMOT}
\end{center}

\begin{remark}[Entropic regularization]
The  Benamou-Brenier  formulation of the dynamical two-marginal optimal transport problem can be entropically regularized by adding a Fisher information term to the objective, or, equivalently, adding a Laplacian term to the continuity equation; cf. \cite[Problem 4.6]{chen2021stochastic}. The addition of the Laplacian term to the continuity equation can also be done for the discretized problem \cite[
\S 6.3]{baradat2021regularized}. The latter regularization applies in our setting as well, and our code provides its implementation. 
\end{remark}

\bibliographystyle{alpha}
\bibliography{ref_DMMOT}

\begin{thebibliography}{BDPGG12}

\bibitem[ABLVE24]{albergo2024multimarginal}
Michael~Samuel Albergo, Nicholas~Matthew Boffi, Michael Lindsey, and Eric
  Vanden-Eijnden.
\newblock Multimarginal generative modeling with stochastic interpolants.
\newblock In {\em The Twelfth International Conference on Learning
  Representations}, 2024.

\bibitem[AC11]{agueh2011barycenters}
Martial Agueh and Guillaume Carlier.
\newblock Barycenters in the wasserstein space.
\newblock {\em SIAM Journal on Mathematical Analysis}, 43(2):904--924, 2011.

\bibitem[AGS08]{ambrosio2008gradient}
Luigi Ambrosio, Nicola Gigli, and Giuseppe Savar{\'e}.
\newblock {\em Gradient flows: in metric spaces and in the space of probability
  measures}.
\newblock Springer Science \& Business Media, 2008.

\bibitem[Amb04]{MR2096794}
Luigi Ambrosio.
\newblock Transport equation and {C}auchy problem for {$BV$} vector fields.
\newblock {\em Invent. Math.}, 158(2):227--260, 2004.

\bibitem[BB00]{benamou2000computational}
Jean-David Benamou and Yann Brenier.
\newblock A computational fluid mechanics solution to the {M}onge-{K}antorovich
  mass transfer problem.
\newblock {\em Numerische Mathematik}, 84(3):375--393, 2000.

\bibitem[BDPGG12]{buttazzo2012optimal}
Giuseppe Buttazzo, Luigi De~Pascale, and Paola Gori-Giorgi.
\newblock Optimal-transport formulation of electronic density-functional
  theory.
\newblock {\em Physical Review A—Atomic, Molecular, and Optical Physics},
  85(6):062502, 2012.

\bibitem[BJO09]{buttazzo2009optimization}
Giuseppe Buttazzo, Chlo{\'e} Jimenez, and Edouard Oudet.
\newblock An optimization problem for mass transportation with congested
  dynamics.
\newblock {\em SIAM Journal on Control and Optimization}, 48(3):1961--1976,
  2009.

\bibitem[BL25]{baradat2021regularized}
A.~Baradat and H.~Lavenant.
\newblock Regularized unbalanced optimal transport as entropy minimization with
  respect to branching {B}rownian motion.
\newblock {\em Ast\'erisque}, (458):viii+194, 2025.

\bibitem[Bre03]{MR2006306}
Yann Brenier.
\newblock Extended {M}onge-{K}antorovich theory.
\newblock In {\em Optimal transportation and applications ({M}artina {F}ranca,
  2001)}, volume 1813 of {\em Lecture Notes in Math.}, pages 91--121. Springer,
  Berlin, 2003.

\bibitem[CE10]{carlier2010matching}
Guillaume Carlier and Ivar Ekeland.
\newblock Matching for teams.
\newblock {\em Economic theory}, 42:397--418, 2010.

\bibitem[CFK13]{cotar2013density}
Codina Cotar, Gero Friesecke, and Claudia Kl{\"u}ppelberg.
\newblock Density functional theory and optimal transportation with {C}oulomb
  cost.
\newblock {\em Communications on Pure and Applied Mathematics}, 66(4):548--599,
  2013.

\bibitem[CFM14]{chen2014numerical}
Huajie Chen, Gero Friesecke, and Christian~B Mendl.
\newblock Numerical methods for a {K}ohn--{S}ham density functional model based
  on optimal transport.
\newblock {\em Journal of chemical theory and computation}, 10(10):4360--4368,
  2014.

\bibitem[CGP21]{chen2021stochastic}
Yongxin Chen, Tryphon~T Georgiou, and Michele Pavon.
\newblock Stochastic control liaisons: {R}ichard {S}inkhorn meets {G}aspard
  {M}onge on a {S}chr\"odinger bridge.
\newblock {\em Siam Review}, 63(2):249--313, 2021.

\bibitem[CK18]{MR4208613}
Yongxin Chen and Johan Karlsson.
\newblock State tracking of linear ensembles via optimal mass transport.
\newblock {\em IEEE Control Syst. Lett.}, 2(2):260--265, 2018.

\bibitem[CMN10]{chiappori2010hedonic}
Pierre-Andr{\'e} Chiappori, Robert~J McCann, and Lars~P Nesheim.
\newblock Hedonic price equilibria, stable matching, and optimal transport:
  equivalence, topology, and uniqueness.
\newblock {\em Economic Theory}, 42:317--354, 2010.

\bibitem[Com18]{combettes2018perspective}
Patrick~L Combettes.
\newblock Perspective functions: Properties, constructions, and examples.
\newblock {\em Set-Valued and Variational Analysis}, 26(2):247--264, 2018.

\bibitem[CP11]{chambolle2011first}
Antonin Chambolle and Thomas Pock.
\newblock A first-order primal-dual algorithm for convex problems with
  applications to imaging.
\newblock {\em Journal of mathematical imaging and vision}, 40:120--145, 2011.

\bibitem[Eva10]{MR2597943}
Lawrence~C. Evans.
\newblock {\em Partial differential equations}, volume~19 of {\em Graduate
  Studies in Mathematics}.
\newblock American Mathematical Society, Providence, RI, second edition, 2010.

\bibitem[FGGG22]{friesecke2022strong}
Gero Friesecke, Augusto Gerolin, and Paola Gori-Giorgi.
\newblock The strong-interaction limit of density functional theory.
\newblock {\em Density Functional Theory: Modeling, Mathematical Analysis,
  Computational Methods, and Applications}, pages 183--266, 2022.

\bibitem[FV18]{friesecke2018breaking}
Gero Friesecke and Daniela V\"{o}gler.
\newblock Breaking the curse of dimension in multi-marginal {K}antorovich
  optimal transport on finite state spaces.
\newblock {\em SIAM Journal on Mathematical Analysis}, 50(4):3996--4019, 2018.

\bibitem[G{\'S}98]{gangbo1998optimal}
Wilfrid Gangbo and Andrzej {\'S}wi{\k{e}}ch.
\newblock Optimal maps for the multidimensional {M}onge-{K}antorovich problem.
\newblock {\em Communications on Pure and Applied Mathematics: A Journal Issued
  by the Courant Institute of Mathematical Sciences}, 51(1):23--45, 1998.

\bibitem[Pas15]{pass2015multi}
Brendan Pass.
\newblock Multi-marginal optimal transport: theory and applications.
\newblock {\em ESAIM: Mathematical Modelling and Numerical Analysis},
  49(6):1771--1790, 2015.

\bibitem[PPO14]{papadakis2014optimal}
Nicolas Papadakis, Gabriel Peyr{\'e}, and Edouard Oudet.
\newblock Optimal transport with proximal splitting.
\newblock {\em SIAM Journal on Imaging Sciences}, 7(1):212--238, 2014.

\bibitem[Pra07]{MR2288266}
Aldo Pratelli.
\newblock On the equality between {M}onge's infimum and {K}antorovich's minimum
  in optimal mass transportation.
\newblock {\em Ann. Inst. H. Poincar\'e{} Probab. Statist.}, 43(1):1--13, 2007.

\bibitem[San15]{Santambrogio_book}
Filippo Santambrogio.
\newblock {\em Optimal transport for applied mathematicians}, volume~87 of {\em
  Progress in Nonlinear Differential Equations and their Applications}.
\newblock Birkh\"{a}user/Springer, Cham, 2015.
\newblock Calculus of variations, PDEs, and modeling.

\bibitem[Vil03]{villani2021topics}
C\'{e}dric Villani.
\newblock {\em Topics in optimal transportation}, volume~58 of {\em Graduate
  Studies in Mathematics}.
\newblock American Mathematical Society, Providence, RI, 2003.

\bibitem[Vil09]{villani2008optimal}
C\'{e}dric Villani.
\newblock {\em Optimal transport}, volume 338 of {\em Grundlehren der
  mathematischen Wissenschaften [Fundamental Principles of Mathematical
  Sciences]}.
\newblock Springer-Verlag, Berlin, 2009.
\newblock Old and new.

\end{thebibliography}

\end{document}